\crefname{equation}{}{}
\newlist{claims}{enumerate}{10}
\setlist[claims]{label*=\arabic*.,ref=\arabic*}
\crefname{claimsi}{Claim}{Claim}
\Crefname{claimsi}{Claim}{Claims}
\newlist{conditions}{enumerate}{10}
\setlist[conditions]{label*=\arabic*.,ref=\arabic*}
\crefname{conditionsi}{condition}{conditions}
\Crefname{conditionsi}{Condition}{Conditions}
\patchcmd{\@maketitle}{\huge}{\Large}{}{}
\newcommand{\diffeo}{\varTheta}
\newcommand\RedeclareMathOperator{%
  \@ifstar{\def\rmo@s{m}\rmo@redeclare}{\def\rmo@s{o}\rmo@redeclare}%
}
\newcommand\rmo@redeclare[2]{%
  \begingroup \escapechar\m@ne\xdef\@gtempa{{\string#1}}\endgroup
  \expandafter\@ifundefined\@gtempa
     {\@latex@error{\noexpand#1undefined}\@ehc}%
     \relax
  \expandafter\rmo@declmathop\rmo@s{#1}{#2}}
\newcommand\rmo@declmathop[3]{%
  \DeclareRobustCommand{#2}{\qopname\newmcodes@#1{#3}}%
}
\DeclareDocumentCommand{\TProj}{ o }{
	\IfValueTF{#1}{
		P_{\!\smash{#1}}^\top
	}{
		P^\top
	}
}
\DeclareDocumentCommand{\NProj}{ o }{
	\IfValueTF{#1}{
		P_{\!\smash{#1}}^\perp
	}{
		P^\perp
	}
}
\newcommand{\DOpSymbol}{\mathcal{D}}
\DeclareDocumentCommand{\DOp}{ o }{
	\IfValueTF{#1}{
		\mathop{}\!\DOpSymbol_{\!\smash{#1}}
	}{
		\mathop{}\!\DOpSymbol
	}
}
\newcommand{\DeltaOpSymbol}{\mathcal{D}}
\newcommand{\deltaOpSymbol}{D}
\DeclareDocumentCommand{\DeltaOp}{ o o }{
	\IfValueTF{#1}{
	  \IfValueTF{#2}{
        \mathop{}\!\DeltaOpSymbol^{\smash{(#1)}}_{\!\smash{#2}}
	  }{
        \mathop{}\!\DeltaOpSymbol^{\smash{(#1)}}
	  }
	}{
		\mathop{}\!\DeltaOpSymbol{\!\smash{#2}}
	}
}
\DeclareDocumentCommand{\deltaOp}{ o }{
	\IfValueTF{#1}{
		\mathop{}\!\deltaOpSymbol^{(#1)}
	}{
		\mathop{}\!\deltaOpSymbol
	}
}
\DeclareDocumentCommand{\DeltaOpG}{ o o }{
	\IfValueTF{#1}{
	  \IfValueTF{#2}{
        \mathop{}\!\underline{\DeltaOpSymbol}^{\smash{(#1)}}_{\!\smash{#2}}
	  }{
        \mathop{}\!\underline{\DeltaOpSymbol}^{\smash{(#1)}}
	  }
	}{
		\mathop{}\!\underline{\DeltaOpSymbol}_{\!\smash{#2}}
	}
}
\newcommand{\RemainderOpSymbol}{\mathcal{R}}
\newcommand{\remainderOpSymbol}{R}
\DeclareDocumentCommand{\RemainderOp}{ o o }{
	\IfValueTF{#1}{
	  \IfValueTF{#2}{
        \mathop{}\!\RemainderOpSymbol^{\smash{#1}}_{\!\smash{#2}}
	  }{
        \mathop{}\!\RemainderOpSymbol^{\smash{#1}}
	  }
	}{
		\mathop{}\!\RemainderOpSymbol_{\!\smash{#2}}
	}
}
\DeclareDocumentCommand{\RemainderOpG}{ o o }{
	\IfValueTF{#1}{
	  \IfValueTF{#2}{
        \mathop{}\!\underline{\RemainderOpSymbol}^{\smash{#1}}_{\!\smash{#2}}
	  }{
        \mathop{}\!\underline{\RemainderOpSymbol}^{\smash{#1}}
	  }
	}{
		\mathop{}\!\underline{\RemainderOpSymbol}_{\!\smash{#2}}
	}
}
\DeclareDocumentCommand{\remainderOp}{ o }{
	\IfValueTF{#1}{
		\mathop{}\!\remainderOpSymbol^{#1}
	}{
		\mathop{}\!\remainderOpSymbol
	}
}
\newcommand{\TaylorOpSymbol}{\mathcal{T}}
\newcommand{\taylorOpSymbol}{T}
\DeclareDocumentCommand{\TaylorOp}{ o o }{
	\IfValueTF{#1}{
	  \IfValueTF{#2}{
        \TaylorOpSymbol^{\smash{#1}}_{\!\smash{#2}}
	  }{
        \TaylorOpSymbol^{\smash{#1}}
	  }
	}{
		\TaylorOpSymbol{\!\smash{#2}}
	}
}
\DeclareDocumentCommand{\taylorOp}{ o }{
	\IfValueTF{#1}{
		\taylorOpSymbol^{#1}
	}{
		\taylorOpSymbol
	}
}
\newcommand{\diffoffset}{\delta}
\newcommand*{\D}{\mathcal{D}}
\renewcommand*{\S}{\mathbb{S}}
\newcommand*{\on}[1]{\operatorname{#1}}
\DeclareMathOperator{\atanh}{atanh}
\newcommand{\SoboSlobo}{Sobolev--Slobodeckij\xspace}
\newcommand{\Id}{I}
\newcommand{\TP}{\mathrm{TP}}
\newcommand{\embeds}{\hookrightarrow}
\newcommand{\Energy}{\cE}
\newcommand{\Metric}{\cG}
\newcommand{\Riesz}{\cM}
\newcommand{\Tikhonov}{\cJ}
\newcommand{\Extension}{\overline{D\Energy}}
\newcommand{\Tpr}{P^\top}
\newcommand{\Npr}{P^\perp}
\newcommand{\NumWaves}{N}
\DeclareMathOperator{\prx}{\pi_1}
\DeclareMathOperator{\pry}{\pi_2}
\newcommand{\OpenBall}[2]{B(#1;#2)}
\newcommand{\ClosedBall}[2]{\bar B(#1;#2)}
\newcommand{\AmbDim}{m} 
\newcommand{\AmbSpace}{{\R^\AmbDim}}
\newcommand{\DomDim}{n} 
\newcommand{\DomSpace}{{\R^\DomDim}}
\newcommand{\Domain}{M}
\newcommand{\NormalSpace}{\R^{\AmbDim-\DomDim}}
\newcommand{\TargetSpace}{H}
\newcommand{\ContOps}{\cL}
\DeclareDocumentCommand{\Hess}{ O{} }{\operatorname{Hess}_{#1}}
\newcommand{\qand}{\quad \text{and} \quad}
\newcommand{\qwith}{\quad\text{with}\quad}
\newcommand{\qwhere}{\quad\text{where}\quad}
\DeclareMathOperator*{\esssup}{ess\,sup}
\DeclareMathOperator*{\argmin}{arg\,min}
\DeclareMathOperator{\Lip}{Lip}
\newcommand{\adj}{^{*}}
\newcommand{\dual}{'}
\newcommand{\pinv}{^{\dagger}}
\newcommand{\mymathcal}{\mathcal}
\newcommand{\cA}{{\mymathcal{A}}}
\newcommand{\cB}{{\mymathcal{B}}}
\newcommand{\cD}{{\mymathcal{D}}}
\newcommand{\cE}{{\mymathcal{E}}}
\newcommand{\cF}{{\mymathcal{F}}}
\newcommand{\cG}{{\mymathcal{G}}}
\newcommand{\cH}{{\mymathcal{H}}}
\newcommand{\cJ}{{\mymathcal{J}}}
\newcommand{\cL}{{\mymathcal{L}}}
\newcommand{\cM}{{\mymathcal{M}}}
\newcommand{\cX}{{\mymathcal{X}}}
\newcommand{\cY}{{\mymathcal{Y}}}
\DeclareMathOperator{\Laplacian}{\Delta}
\newcommand{\dd}{\mathop{}\!\mathrm{d}}
\newcommand{\ii}{{\mathrm{i}}}
\newcommand{\ee}{{\mathrm{e}}}
\newcommand{\ceq}{\coloneqq}
\newcommand{\qec}{\eqqcolon}
\newcommand{\R}{{\mathbb{R}}}
\newcommand{\C}{{\mathbb{C}}}
\newcommand{\N}{\mathbb{N}}
\DeclareMathOperator{\id}{id}
\DeclarePairedDelimiterXPP{\pars}[1]{\mathop{}}{\lparen}{\rparen}{}{#1}
\DeclarePairedDelimiterXPP{\abs}[1]{\mathop{}}{\lvert}{\rvert}{}{#1}
\DeclarePairedDelimiterXPP{\norm}[1]{\mathop{}}{\lVert}{\rVert}{}{#1}
\DeclarePairedDelimiterXPP{\seminorm}[1]{\mathop{}}{\lbrack}{\rbrack}{}{#1}
\DeclarePairedDelimiterXPP{\inner}[1]{\mathop{}}{\langle}{\rangle}{}{#1}
\DeclarePairedDelimiterXPP{\iinner}[1]{\mathop{}}{\langle\!\langle}{\rangle\!\rangle}{}{#1}
\DeclarePairedDelimiterXPP{\brackets}[1]{\mathop{}}{\lbrack}{\rbrack}{}{#1}
\DeclarePairedDelimiterXPP{\braces}[1]{\mathop{}}{\lbrace}{\rbrace}{}{#1}
\DeclarePairedDelimiterXPP{\floor}[1]{\mathop{}}{\lfloor}{\rfloor}{}{#1}
\DeclarePairedDelimiterXPP{\ceil}[1]{\mathop{}}{\lceil}{\rceil}{}{#1}
\DeclarePairedDelimiterXPP{\intervalcc}[1]{\mathop{}}{\lbrack}{\rbrack}{}{#1}
\DeclarePairedDelimiterXPP{\intervalco}[1]{\mathop{}}{\lbrack}{\rparen}{}{#1}
\DeclarePairedDelimiterXPP{\intervaloc}[1]{\mathop{}}{\lparen}{\rbrack}{}{#1}
\DeclarePairedDelimiterXPP{\intervaloo}[1]{\mathop{}}{\lparen}{\rparen}{}{#1}
\DeclarePairedDelimiterXPP{\myset}[2]{\mathop{}}{\lbrace}{\rbrace}{}{#1\,\delimsize\vert\,\mathopen{}#2}
\DeclareMathOperator{\dom}{dom}
\DeclareMathOperator{\supp}{supp}
\RedeclareMathOperator{\supp}{supp}   
\DeclareMathOperator{\diam}{diam}  
\DeclareMathOperator{\Hom}{Hom}    
\newcommand{\vol}{{\omega}}
\newcommand{\dvol}{\dd \omega}
\newcommand{\HsdM}{\mathcal{H}}
\newcommand{\HsdD}{d_\cH}
\newcommand{\Av}{\on{Av}}
\newcommand{\scat}{{\mathrm{s}}}
\newcommand{\inc}{{\mathrm{i}}}
\newcommand{\loc}{{\mathrm{loc}}}
\DeclareDocumentCommand{\converges}{ o }{
	\mathbin{%
		\IfValueTF{#1}{%
			\mathrel{\vbox{\offinterlineskip\ialign{%
				\hfil##\hfil\cr
				$\scriptscriptstyle#1$\cr
				$-\!\!\!-\!\!\!\rightarrow$\cr
			}}}
		}{%
			-\!\!\!-\!\!\!\rightarrow
		}%
	}%
}
\DeclareDocumentCommand{\wconverges}{ o }{
	\mathbin{%
		\IfValueTF{#1}{%
			\mathrel{\vbox{\offinterlineskip\ialign{%
				\hfil##\hfil\cr
				$\scriptscriptstyle#1$\cr
				$-\!\!\!-\!\!\!\rightharpoonup$\cr
			}}}
		}{%
			-\!\!\!-\!\!\!\rightharpoonup
		}%
	}%
}
\DeclareDocumentCommand{\Measure}{ o }{
	\IfValueTF{#1}{%
		\mu_{\!#1}
	}{%
		\mu
	}%
}
\newcommand{\sdfrac}[2]{\mbox{\small$\displaystyle\frac{#1}{#2}$}}
\newcommand{\fdfrac}[2]{\mbox{\footnotesize$\displaystyle\frac{#1}{#2}$}}
\DeclareDocumentCommand{\Graph}{ O{} O{} o o}{
	\IfValueTF{#3}{
	  \IfValueTF{#4}{
	  	\mathrm{Graph}^{#1}_{#2}(#3;#4)
	  }{
		\mathrm{Graph}^{#1}_{#2}(#3)
	  }
	}{
		\mathrm{Graph}^{#1}_{#2}
	}
}
\DeclareDocumentCommand{\Emb}{ O{} O{} o o}{
	\IfValueTF{#3}{
	  \IfValueTF{#4}{
	  	\mathrm{Emb}^{#1}_{#2}(#3;#4)
	  }{
		\mathrm{Emb}^{#1}_{#2}(#3)
	  }
	}{
		\mathrm{Emb}^{#1}_{#2}
	}
}
\newcommand{\Embsp}{\Emb[s,p][][M][\AmbSpace]}
\DeclareDocumentCommand{\Sobo}{ O{} O{} o o}{
	\IfValueTF{#3}{
	  \IfValueTF{#4}{
	  	W^{#1}_{#2}(#3;#4)
	  }{
	  	W^{#1}_{#2}(#3)
	  }
	}{
	  W^{#1}_{#2}
	}
}
\DeclareDocumentCommand{\Bessel}{ O{} O{} o o}{
	\IfValueTF{#3}{
	  \IfValueTF{#4}{
	  	H^{#1}_{#2}(#3;#4)
	  }{
	  	H^{#1}_{#2}(#3)
	  }
	}{
	  H^{#1}_{#2}
	}
}
\DeclareDocumentCommand{\Besov}{ O{} O{} o o}{
	\IfValueTF{#3}{
	  \IfValueTF{#4}{
	  	B^{#1}_{#2}(#3;#4)
	  }{
	  	B^{#1}_{#2}(#3)
	  }
	}{
	  B^{#1}_{#2}
	}
}
\DeclareDocumentCommand{\Holder}{ O{} O{} o o}{
	\IfValueTF{#3}{
	  \IfValueTF{#4}{
	  	C^{#1}_{#2}(#3;#4)
	  }{
	  	C^{#1}_{#2}(#3)
	  }
	}{
	  C^{#1}_{#2}
	}
}
\DeclareDocumentCommand{\Lebesgue}{ O{} O{} o o}{
	\IfValueTF{#3}{
	  \IfValueTF{#4}{
	  	L^{#1}_{#2}(#3;#4)
	  }{
	  	L^{#1}_{#2}(#3)
	  }
	}{
	  L^{#1}_{#2}
	}
}
	\newcommand{\mynewtheorem}[4] 
{
\newaliascnt{#1}{#2}
\newtheorem{#1}[#1]{#3}
\aliascntresetthe{#1}
\expandafter\def\csname #1autorefname\endcsname{%
#4%
}%
}
\newtheorem{theorem}{Theorem}[section]
\theoremstyle{break}
\theoremstyle{plain}
\theoremstyle{break}
\theoremstyle{nonumberplain}
\newtheorem{proof}{Proof}
\newtheorem{hproof}{Hidden proof}
\title{Inverse obstacle scattering regularized by the tangent-point energy}
\author[2]{Henrik Schumacher}
\author[1]{Jannik R\"onsch}
\author[1]{Thorsten Hohage}
\author[1]{Max Wardetzky}
\affil[1]{Institute for Numerical and Applied Mathematics, Georg-August-University Göttingen}
\affil[2]{Institute for Mathematics, RWTH Aachen University}
\date{\vspace{-3ex}\today}
\begin{document}
\maketitle
\vspace{-6ex}
\begin{abstract}

We employ the so-called tangent-point energy as Tikhonov regularizer for ill-conditioned inverse scattering problems in $3D$.
The tangent-point energy is a self-avoiding functional on the space of embedded surfaces that also penalizes surface roughness. 
Moreover, it features nice compactness and continuity properties. 
These allow us to show the well-posedness of the regularized problems and the convergence of the regularized solutions to the true solution in the limit of vanishing noise level.
We also provide a reconstruction algorithm of iteratively regularized Gauss--Newton type.
Our numerical experiments demonstrate that our method is numerically feasible and effective in producing reconstructions of unprecedented quality.
\end{abstract}

\section{Introduction}\label{sec:Introduction}

The idea of gathering information about an unknown physical object by investigating its wave-scattering behavior---namely, ``inverse scattering problems''---finds application in many physical real-world applications such as radar, sonar, medical imaging, and non-destructive testing.
In this paper we focus on the classic ``three-dimensional acoustic inverse obstacle scattering problem'', but regularization by the tangent-point energy may be applied to any inverse obstacle 
problem. 
The goal is to determine the \emph{shape} of a three-dimensional scatterer from noisy far field measurements of scattered waves induced by (known) incident plane waves.
It is a well-known fact that this problem is ill-posed in the sense of Hadamard, i.e., the solution does not depend continuously on the far field data (see \cite[Theorem 2.17]{zbMATH06061716}).
In particular, this means that small perturbations of the data by noise might induce a huge error in the reconstruction.
Thus, one needs to employ a suitable regularization method in order to control the error induced by the data noise and to meaningfully reconstruct the scatterer from the measurements.

Let us briefly discuss some existing methods for solving inverse obstacle scattering problems. One broad class of methods are sampling methods, which do 
not provide a parameterization of the unknown obstacle but give us some data-based criterion 
whether a given point in space belongs to the unknown obstacle. First representatives 
of such methods have been proposed in \cite{David_Colton_1996,sampling1,sampling2}. 
While easy to implement, such methods usually yield rather inaccurate reconstructions, 
they require a lot of data, and they impose strong restrictions on the type of data. 
Two classes of methods that yield an explicit representation (e.g., a parameterization) 
of the unknown surface are decomposition methods 
\cite{coltonmonk,coltonmonk2,KIRSCH1987279,kress_zinn} and 
iterative regularization methods \cite{Thorsten_Hohage_1997,KR:94}.  
For such methods the representation of surfaces---namely the choice of a  suitable ``space of shapes''--- becomes a difficulty. Another issue is the choice of a ``natural'' smoothness 
penalty that is invariant under reparameterizations and rigid body motions.  
One common simplifying assumption consists in considering star-shaped obstacles by making use of their representation as normal-offsets of a sphere and Sobolev norms of the normal-offset as 
smoothness penalties. In a previous work of the last three authors with Eckhard and 
Hiptmair (\cite{Eck_article}, see also \cite{Eck}) it has been demonstrated that the 
bending energy of closed curves in the plane is a natural smoothness penalty allowing 
the reconstruction of complicated non-star-shaped obstacles and improving accuracy for 
star-shaped obstacles. However, this approach has no straightforward generalization to 
surfaces embedded in $\mathbb{R}^3$, and it does not prevent self intersections. 
Approaches to reconstruct more general (topologically spherical but non-star shaped) surfaces with a ``classical'' regularization method were suggested by Farhat, Tezaur and Djellouli \cite{farhat} as well as Hohage and Harbrecht \cite{zbMATH05251341}.
More recently, Chen, Jin, and Liu \cite{chen} proposed a reconstruction method based on machine learning.
However, the reconstruction of detailed, non-star-shaped obstacles, such as the ``Stanford bunny'' presented in \cref{fig:Bunny} and obstacles of higher genus, demand further work.


\begin{figure}[t]
    \centering
    \begin{tikzpicture}%
        \node[inner sep=0pt] (fig) at (0,0) {\includegraphics[	
            trim = 0 0 0 0, 
            clip = true,  
            angle = 0,
            width = 0.34\textwidth
        ]{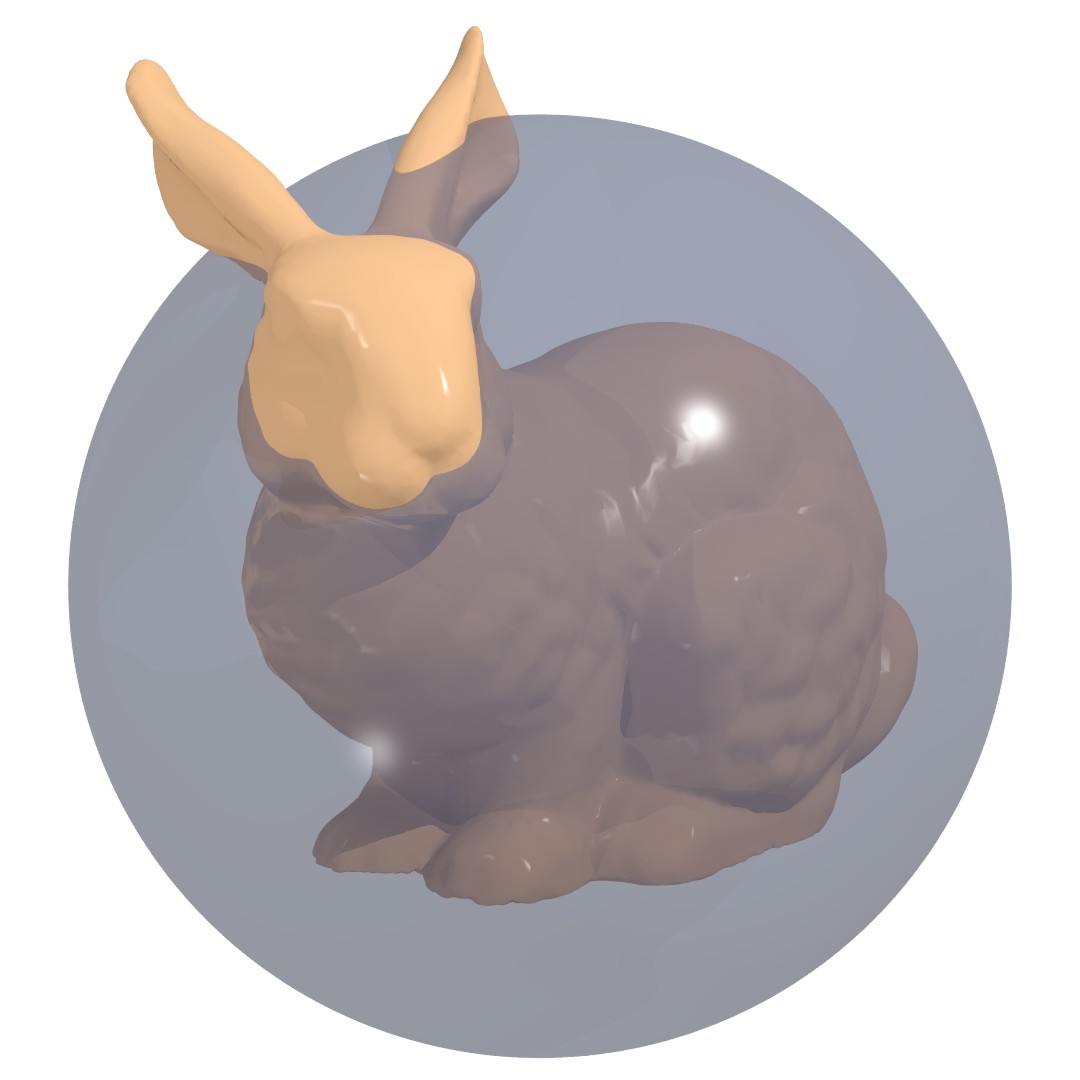}};%
        \node[above = 1ex, right= 3ex] at (fig.south west) {\begin{footnotesize}(a)\end{footnotesize}};%
    \end{tikzpicture}%
    \hspace{-0.042\textwidth}%
    \begin{tikzpicture}%
        \node[inner sep=0pt] (fig) at (0,0) {\includegraphics[	
            trim = 80 20 80 80, 
            clip = true,  
            angle = 0,
            width = 0.34\textwidth
        ]{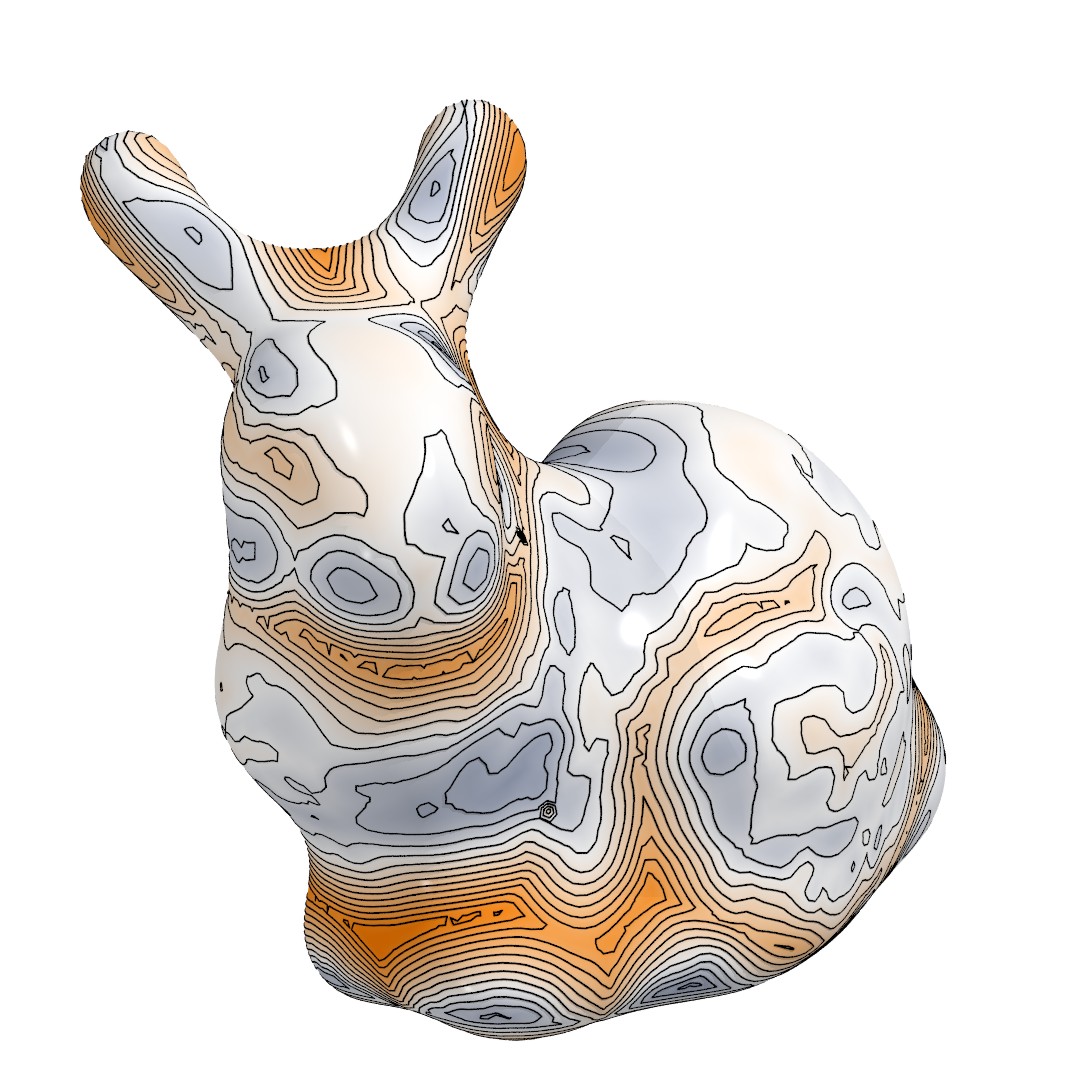}};%
        \node[above = 1ex, right= 3ex] at (fig.south west) {\begin{footnotesize}(b)\end{footnotesize}};%
    \end{tikzpicture}%
    \hspace{-0.042\textwidth}%
    \begin{tikzpicture}%
        \node[inner sep=0pt] (fig) at (0,0) {\includegraphics[	
            trim = 80 20 80 80, 
            clip = true,  
            angle = 0,
            width = 0.34\textwidth
        ]{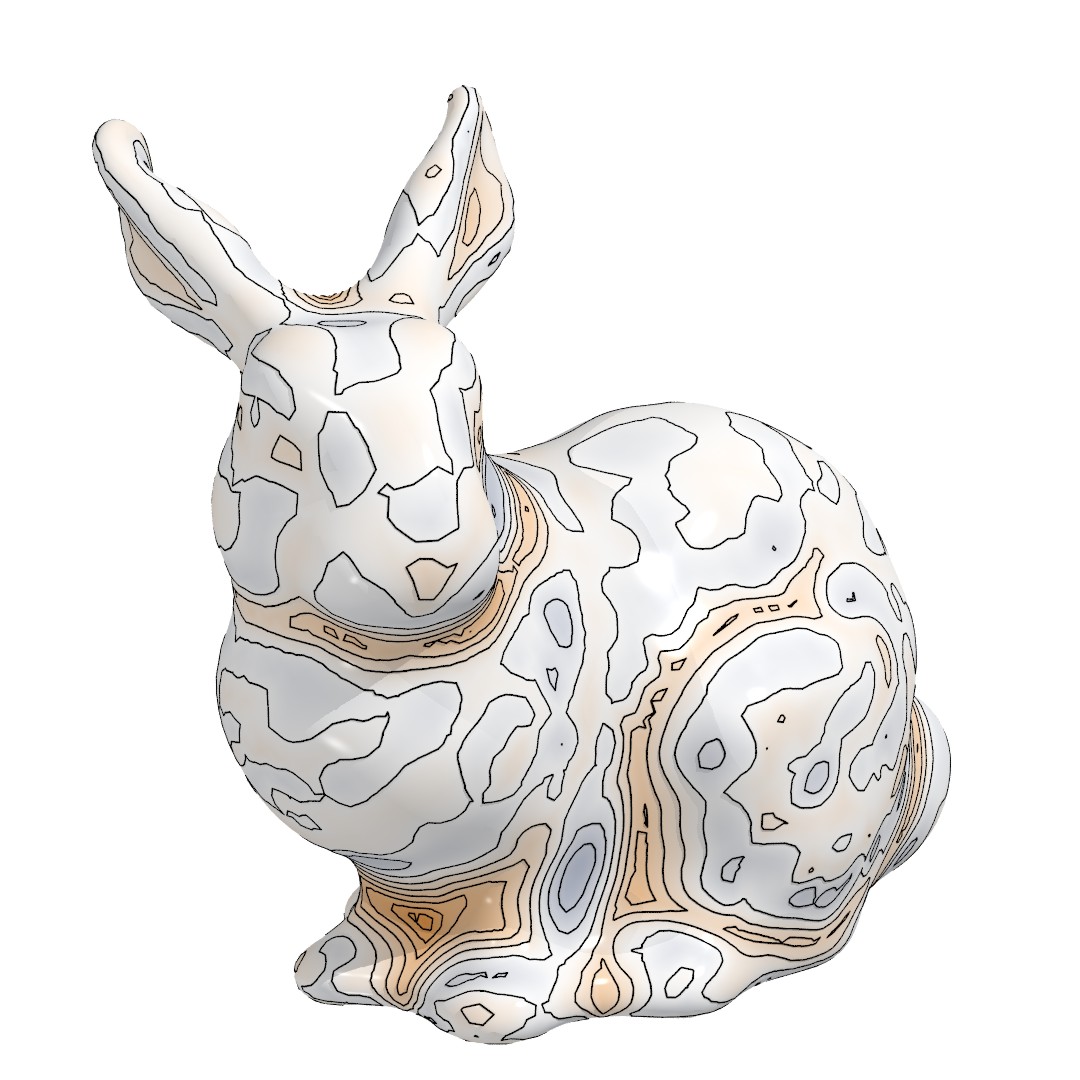}};%
        \node[above = 1ex, right= 3ex] at (fig.south west) {\begin{footnotesize}(c)\end{footnotesize}};%
    \end{tikzpicture}%
    \includegraphics[	
        trim = 0 0 0 0, 
        clip = true,  
        angle = 0,
        width = 0.0625\textwidth
    ]{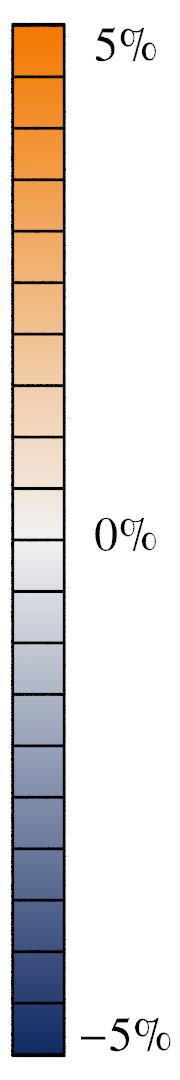}%
    \caption{%
        (a)~The original Stanford bunny together with the sphere we used as initial guess.
        The synthetic far field data was perturbed by $1\%$ Gaussian white noise.
        (b)~A rough reconstruction, shaded by the signed distance field of the true obstacle.
        We used wavelengths $\lambda \in \braces{1, 2}$, each in $16$ wave directions, roughly evenly distributed over the unit sphere.
        (c)~The final reconstruction (16 waves with $\lambda = 1/2$), also shaded by the signed distance field of the true obstacle.
        This state was reached from (b) in $32$ Gauss--Newton steps with discrepancy parameter $\tau=2$ (see \cref{sec:IRGNM} for the meaning of~$\tau$).
    }%
    \label{fig:Bunny}
\end{figure} 

\medskip

Our approach to the reconstruction problem is to consider the well-known \emph{(generalized) Tikhonov regularization}, i.e., to stabilize the minimization of 
\[
    \cF(f)\ceq \norm{F(f)-g^\delta}^2_{\Lebesgue[2]}
\]
over some Banach space $X$ of parameterizations, where $F$ denotes the boundary-to-far field map.
The main novelty of our approach is to use the so-called \emph{tangent-point energy} $\Energy$ as a stabilizing functional. This geometric surface energy has the following properties:
\begin{enumerate}[noitemsep]
    \item It enforces $C^{1,\beta}$-regularity of the reconstruction, $0 < \beta < 1$.
    \item It is repulsive, i.e., it avoids self intersections.
    \item It is relatively well-investigated  both in the analytical and in the numerical sense (cf. \cite{zbMATH06214305,MR3331696,zbMATH07046194,zbMATH06017037,zbMATH06203565,RepulsiveS,RepulsiveC}).
\end{enumerate}
For a more detailed discussion of the tangent-point energy, we refer to~\cref{sec:TPE}.
The choice of using a geometric energy as a regularizer is motivated by the approach of \cite{Eck} and \cite{Eck_article} for the two-dimensional problem.
In \cref{sec:ExRegProp} we show that the tangent-point energy is indeed a suitable regularizer in the sense of generalized Tikhonov regularization.
On the computational side we additionally require an efficient way of minimizing functions of the form $\cJ(f) = \cF(f) + \alpha \, \Energy(f) $, where $\alpha > 0$. 
It turns out that $\Energy$ dominates $\cF$ in the sense that the domain of $\Energy$ is contained in the domain of $\cF$ and $\cF$ is completely continuous with respect to the topology of the energy space of $\cE$.
Informally speaking, this means that any sufficiently good optimization method for $\Energy$ needs only small adaptation to work also for $\cJ(f)$.

Let us consider the problem of optimizing $\Energy$ for a moment. To this end it might appear natural to employ Newton's method.
However, there are several issues that complicate this approach. In particular, one has to work in an infinite-dimensional Banach space $X$, which entails several challenges:
\begin{enumerate}[noitemsep]
    \item The second derivatives $DD \Energy(f) \colon X \to X\dual$ might be difficult to compute, such that numerically solving $D D\Energy(f) \, v = - D\Energy(f)$ might become computationally infeasible.
    \item For nonconvex problems such as ours, the Hessian $D^2 \Energy(f) \colon X \times X \to \R$ might not be positive-definite, i.e., $v$ is not guaranteed to be a descend direction with respect to $\Energy$.
\end{enumerate}
Fortunately, the experiments from \cite{RepulsiveC} and \cite{RepulsiveS} indicate that these problems can be circumvented by replacing the Hessian $D^2 \Energy(f)$ by a suitable Sobolev metric as preconditioner; see \cref{sec:Preconditioner}.
This allows us to set up a suitably preconditioned gradient descent algorithm.
More precisely, we propose a Gauss--Newton-type optimization algorithm of iterative type (i.e., with successively decreasing regularization parameter) in \cref{sec:Optimization}.
The implementation of this algorithm is sketched in \cref{sec:Implementation}, and we close with various numerical examples showing the versatility of our algorithm with regard to reconstructing various shapes of different genera and its robustness against perturbation by noise (up to $40\%$ noise-level) in \cref{sec:num}.


\section{Inverse obstacle scattering}
\label{sec:Scat}

\subsection{The forward scattering problem}


\begin{figure}[t]
    \begin{center}
    \newcommand{\inca}[2]{%
        \begin{tikzpicture}%
            \node[inner sep=0pt] (fig) at (0,0) {\includegraphics[	
            trim = 20 20 20 0, 
            clip = true,  
            angle = 0,
            width = 0.33\textwidth]{#1}};%
            \node[above left = 0ex] at (fig.south east) {%
                \begin{footnotesize} \colorbox{White}{\textbf{#2}}\end{footnotesize}%
                };%
        \end{tikzpicture}%
    }
    \inca{Figure_Nearfield/Blub_Nearfield_5Pi_Back_Incoming_Re}{(a)}%
    \hfill
    \inca{Figure_Nearfield/Blub_Nearfield_5Pi_Back_Scattered_Re}{(b)}%
    \hfill
    \inca{Figure_Nearfield/Blub_Nearfield_5Pi_Back_Outgoing_Re}{(c)}%
    \caption{%
        (a) The surface \emph{Blub} as obstacle for the plane wave $u_\inc$, incoming from the right-hand side. 
        (b) The scattered wave $u_\scat$ of the solution to the boundary value problem \cref{eq:bvp}.
        (c) The total solution $u$ of the problem. Notice that it vanishes on the boundary of the obstacle, correctly modelling the sound-absorbing behavior.
    }
    \label{fig:Nearfield}
    \vspace{-1em}
    \end{center}
\end{figure}

The forward problem we consider is the time-harmonic acoustic inverse scattering problem with a sound-soft scatterer. 
Let $\varOmega\subset\R^3$ be a bounded domain with $C^{1,\alpha}$-boundary $\varSigma\ceq\partial\varOmega$ (where $0<\alpha<1$) and connected complement.
Denote by $\kappa>0$ a fixed wave number and by $d\in\S^2$ the incident direction for the \emph{incoming} plane wave $u_\inc(x)\ceq \ee^{\ii \, \kappa \inner{x,d}}$.
The forward problem consists in finding the resulting wave 
$u\in\Holder[2][][\R^3\!\setminus\!\overline{\varOmega}][\C]\cap \Holder[0][][\R^3\!\setminus\!\varOmega][\C]$.
This wave is a superposition $u = u_\inc + u_\scat$ of the incoming wave $u_\inc$ and the \emph{scattered wave} $u_\scat$ (see \cref{fig:Nearfield}) and solves the following Dirichlet boundary value problem:
\begin{subequations}\label{eq:bvp}
\begin{alignat}{2}
\Delta u + \kappa^2u &= 0  
&&
\text{on}\ \R^3\!\setminus\! \overline{\varOmega},\label{eq:bvp_a}
\\
u &= u_\inc + u_\scat \ ,\label{eq:bvp_b}
\\
u &= 0 \qquad
&&
\text{on}\ \varSigma,\label{eq:bvp_c}
\\
\lim_{r\to\infty} r \pars*{\fdfrac{\partial u_\scat}{\partial r}- \ii \, \kappa \, u_\scat}&=0
&&
\text{where $r=\abs{x}$}.\label{eq:bvp_d}
\end{alignat}
\end{subequations}
The homogeneous Dirichlet boundary condition \cref{eq:bvp_c} models the sound-soft behavior of the obstacle's boundary $\varSigma$.
Condition \cref{eq:bvp_d} on the scattered wave is named \emph{Sommerfeld radiation condition} and ensures the scattered wave to be outgoing. By including this condition, the boundary value problem \cref{eq:bvp} has a unique solution \cite[Theorem~2.12]{Chandler}.
Without it, the solution to the boundary value problem \cref{eq:bvp_a}-\cref{eq:bvp_c} would not be unique \cite[p.~16]{zbMATH06061716}. The resulting function $u_\scat$ is then called a \emph{radiating solution} to the Helmholtz equation.

The \emph{far field pattern} 
$u_{\varSigma,\infty}\pars{\cdot,d,\kappa}\in\Lebesgue[2][][\S^2][\C]$ 
associated to the solution $u_{\scat}$ is defined by the asymptotic behavior of the scattered wave. This function satisfies the equation \cite{zbMATH06061716}
\begin{align*}
    u_{\scat}
    =
    \frac{\ee^{\ii \, \kappa \abs{x}}}{ \abs{x} }
    \pars[\Big]{
        u_{\varSigma, \infty} \pars[\Big]{
            \fdfrac{x }{ \abs{x}}, d ,\kappa
        }
        +
        \mathcal{O}\pars[\Big]{ \fdfrac{1}{\abs{x}} }
    }
    .
\end{align*}
We often abbreviate notation by
$u_\infty\ceq u_{\varSigma,\infty}\pars{\cdot,d,\kappa}$.

\subsection{Formulation as integral equation}\label{sec:bie}

The boundary value problem \cref{eq:bvp} can equivalently be reformulated using integral operators on the boundary.
We denote by $\HsdM^2$ the two-dimensional Hausdorff measure and introduce the fundamental solution to the Helmholtz problem
\begin{align}\label{eq:fundamental_solution}
    \varPhi_\kappa(x,y) 
    \ceq  
    \frac{1}{4\uppi}\frac{\ee^{\ii \, \kappa\abs{x-y}}}{\abs{x-y}} 
    \quad \text{for}\ x\neq y.
\end{align}
The single- and double-layer boundary integral operators are densely defined on $\Holder[0][][\varSigma][\C]$
via
\begin{equation*}
    V \varphi(x) 
    \ceq 
    \int_\varSigma \varPhi_\kappa(x,y)\,\varphi(x) \dd \HsdM^2(y)
    \qand
    K \varphi(x) 
    \ceq 
    \int_\varSigma \frac{\partial\varPhi_\kappa(x,y)}{\partial\nu(y)} \, \varphi(x) \dd \HsdM^2(y)
    ,
\end{equation*}
where $\nu \colon \varSigma \to \S^2$ denotes the outward pointing normal vector field of $\varSigma = \partial \varOmega$; see \cite{zbMATH01446717}.
They can be extended to continuous operators 
\begin{equation*}
	V 
    \colon 
    \Bessel[-1/2][][\varSigma][\C]
    \to 
    \Bessel[1/2][][\varSigma][\C]
    \qand
	K
    \colon 
    \Bessel[1/2][][\varSigma][\C]
    \to  
    \Bessel[1/2][][\varSigma][\C]
    .
\end{equation*}
Moreover, both operators can be extended to continuous operators $\Lebesgue[2][][\varSigma][\C]\to\Lebesgue[2][][\varSigma][\C]$; see~\cite{Chandler}.

We decided to use the well-known \emph{mixed indirect potential approach} to the boundary integral equations; see, e.g., \cite{zbMATH06061716,GaGr,hohage_diss}.
We prefer this approach over the (also commonly used) \emph{double-layer approach}, since the latter requires that $\kappa^2$ is not an eigenvalue of the interior Neumann problem in $\varOmega$; see \cite{kress_parameter,mitrea}.
Moreover, one has to expect bad conditioning of the integral equations when $\kappa^2$ comes close to such an eigenvalue.
This can be mitigated by choosing a suitable coupling parameter $\eta>0$. 
Then the scattered wave $u_\scat$ of the solution $u=u_\inc+u_\scat$ to \cref{eq:bvp} can be equivalently written as
\begin{align}\label{eq:scattered}
u_\scat(x)
	= 
	\int_\varSigma 
    \,
    \pars*{
		\frac{\partial\varPhi_\kappa(x,y)}{\partial\nu(y)} - \ii \, \eta \,\varPhi_\kappa(x,y)
	}\, \varphi(y) \dd \HsdM^2(y)
    \quad 
    \text{for $x\in\R^3\!\setminus\!\overline{\varOmega}$,}
\end{align}
where the potential $\varphi\in  \Lebesgue[2][][\varSigma]$ is the unique solution of the integral operator equation
\begin{equation}\label{eq:integral}
	\pars[\big]{ \tfrac{1}{2} \, \Id -\ii \, \eta \, V + K } \,\varphi = -u_\inc |_{\varSigma}.
\end{equation}
Moreover, the associated far field has the representation 
\begin{equation}\label{eq:farfield}
	u_{\varSigma,\infty}(z) 
	= \frac{1}{4\uppi}
	\int_\varSigma 
    \,
    \pars*{
		\fdfrac{
                \partial\,\ee^{-\ii \, \kappa \inner{z,y}}
        }{
            \partial\nu(y)
        } - \ii \, \eta \, \ee^{-\ii \, \kappa \inner{z,y} }
	}\, \varphi(y) \dd \HsdM^2(y)
	\qquad \text{for $z\in\mathbb{S}^2$}
    .
\end{equation}
In practice, we will mostly use $\eta=\kappa$ as suggested in \cite{kress_parameter} to obtain a better conditioning in \cref{eq:integral}.
We also mention the direct approach leading to the transposed integral equation,
which has the normal derivative of the total field $\partial u/\partial \nu$ as unknown:
\begin{align}\label{eq:directmethod}
\pars[\big]{ \tfrac{1}{2} \, \Id -\ii \, \eta \, V + K' } \,\frac{\partial u}{\partial \nu} = \frac{\partial u_\inc}{\partial \nu}-\ii \,\eta \,u_\inc |_{\varSigma}
\end{align}
(see \cite[Thm. 3.15]{coltonkress} for the case $\eta=0$ in 3D and \cite{kress:95a} for $\eta>0$ in 2D). Here $K'$ denotes the normal derivative of the single layer potential, which is the transpose of $K$. The far field is then given by 
\[
    u_{\varSigma,\infty}(z) 
	= - \frac{1}{4\uppi}\int_\varSigma \frac{\partial u}{\partial \nu} \ee^{-\ii \, \kappa \inner{z,y}} \dd \HsdM^2(y)
    \quad \text{for $z\in\mathbb{S}^2$.}
\]

\subsection{The inverse problem}

The associated inverse problem to \cref{eq:bvp} is to reconstruct an approximation of the shape $\varSigma$ from one or several (noisy) far field measurement(s) in $\Lebesgue[2][][\S^2][\C]$ for known wave number(s) and wave direction(s).
More precisely, suppose that we are given finitely many 
incident wave directions $d_1,\dotsc, d_\NumWaves \in \S^2$ and wave numbers 
$\kappa_1,\dotsc,\kappa_\NumWaves \in \intervaloo{0,\infty}$.
Then, for a given obstacle $\varOmega$ with $\Holder[1, \alpha]$-boundary surface $\varSigma \ceq \partial \varOmega$ (where $\alpha \in \intervaloc{0,1}$), we obtain far field patterns
\begin{equation*}
    F(\varSigma) \ceq \pars[\Big]{ 
        u_{\varSigma,\infty}(\cdot,d_1,\kappa_1) 
        ,\dotsc,
        u_{\varSigma,\infty}(\cdot,d_\NumWaves,\kappa_\NumWaves)
    }
    .
\end{equation*}
This defines the \emph{forward operator} or \emph{boundary-to-far field map}
\begin{equation*}
    F \colon \dom(F) \subset \braces[\big]{ \text{closed $\Holder[1, \alpha]$-surfaces in $\R^3$} }  \to \Lebesgue[2][][\S^2][\C^\NumWaves]
    .
\end{equation*}
It is known that the domain of $F$ can be chosen such that $F$ is injective.
For instance, this is the case if $\dom(F)$ is restricted to surfaces contained in a ball of radius $R<\uppi/\kappa$; see \cite[Corollary~5.3]{zbMATH06061716}.
Moreover, $F$ is injective on the set of surfaces included in some \emph{arbitrary} bounded open ball, provided that the number of incident wave directions for a fixed wave number is chosen large enough; see \cite[Theorem~5.2]{zbMATH06061716}.

Now suppose that we are given a true obstacle $\varSigma^\dagger$ and a measurement $g^\delta \in \Lebesgue[2][][\S^2][\C^\NumWaves]$ of $F(\varSigma^\dagger)$, polluted by some noise $\varepsilon$ of noise level $\delta > 0$, i.e.,$\norm{\varepsilon} \leq \delta$:
\begin{equation}
    g^\delta = F(\varSigma^\dagger) + \varepsilon.
    \label{eq:InverseScattering}
\end{equation}
The \emph{inverse} problem is to reconstruct $\varSigma^\dagger$ from $g^\delta$.

We will often use the representation of a $C^{1, \alpha}$-surface $\varSigma$ by a parameterization $f \colon \Domain \to \R^3$ such that $f(\Domain) = \varSigma$, where $\Domain$ is a fixed, compact $2$-dimensional manifold without boundary.
Slightly abusing notation, we define $F(f) \ceq F(f(\Domain))$, where $\dom(F)$ is regarded a subset of the open set of embeddings in the Banach space $\Holder[1, \alpha][][\Domain][\R^3]$.
Notice that the image $f(\Domain)$ of $f$ does not change under reparameterizations $\varphi \colon \Domain \to \Domain$; thus we have $F(f \circ \varphi) = F(f)$.

\subsection*{Regularization methods}\label{sec:RegularizationMethods}
The operator given in \cref{eq:farfield} is strongly smoothing. 
Thus, as already mentioned in the introduction, the stated inverse problem of recovering the obstacle from the measured far field is ill-posed, and we need an appropriate regularization method to overcome instabilities in the reconstruction. 
One widely applied method is to approximate a generalized inverse to the forward operator by an  
estimator defined in terms of 
a minimization problem including a weighted stability term.
This method is usually called \emph{Tikhonov regularization} and was first introduced by Tikhonov \cite{tikhonov1,tikhonov2} and Phillips \cite{phillips:62}.
Let $F \colon \dom(F) \to \cY$ be a general operator mapping into a Hilbert space $\cY$ and denote by $\Energy \colon \dom(\Energy)\to \R_{\geq 0}$ the stability term.
Consider measured noisy data $g^\delta\in\cY$ and a regularization parameter $\alpha>0$.
Then one seeks to solve the minimization problem
\begin{equation*}
    f_\alpha
    \ceq 
    \argmin_{f \in \dom(F) \cap \dom(\Energy)} \; \norm{F(f)-g^\delta}_{\cY}^2 + \alpha \, \Energy(f)
    ,
\end{equation*}
where the functional to be minimized is called the \emph{Tikhonov functional}.
Now consider $\dom(F)$ to be a subset of some Banach space $\cX$ and $F$ to be nonlinear.
Then, in general, even if $\dom(F)$ and $\Energy$ are convex, the Tikhonov functional is nonconvex.
In this case, Tikhonov regularization is prone to running into local minima during an optimization process.
To mitigate this issue, a variety of iterative regularization methods have been introduced in the literature.
Here we are interested in Gauss--Newton type methods:
One starts by linearizing the fitting term, which requires Fréchet differentiability of~$F$.
In order to define an iterative method, one may consider a current guess $f_k \in \cX$ for $k \in \N$ and choose a functional $E_{f_k} \colon \cX \to \R$ that is convex and satisfies $\Energy(f_k + v) \approx E_{f_k}(v)$ near $v = 0$.
Then we determine the next iterate $f_{k+1}$ by a minimization problem as follows:
\begin{equation}
    f_{k+1}
    \ceq 
    f_k + v_k 
    \qwhere 
    v_k
    \ceq
    \argmin_{v \in \cX}\, \tfrac{1}{2} \norm[\big]{F(f_k) + DF(f_k) \, v-g^\delta}_{\cY}^2 + \alpha_k \, E_{f_k}(v),
    \label{eq:LocalQuadraticProxy}
\end{equation}
where the regularization parameter $\alpha_k\ceq \rho \, \alpha_{k-1}$ is updated in every step with some $0 < \varrho < 1$.
This method was first proposed as an iterative regularization method by Bakushinskij in \cite{zbMATH00205051} for the special case $\Energy(f) = \norm{f-f_0}_{\cX}^2$ with some $f_0\in \cX$ and with $E_{f_k}(v) = \Energy(f_k+v)$.
Convergence analysis for this method can be found, for instance, in \cite{zbMATH01033895,hohage_diss}.
Each update step can then be interpreted as the first iterate of a Gauss--Newton algorithm to minimize the Tikhonov functional.
Moreover, in~\cite{Eck} Eckardt suggests using the second order Taylor expansion of the stabilizing functional $E_{f_k}(v) = \Energy(f_k) + D\Energy(f_k) \, v + \frac{1}{2} \D^2 \Energy(f_k) (v,v)$ and shows that, under suitable assumptions, this approach defines an iterative regularization method.
As already pointed out in \cref{sec:Introduction}, $\D^2 \Energy(f_k)$ might be difficult to compute and may fail to be positive-semidefinite. We therefore replace $\D^2 \Energy(f_k)$ by a positive-semidefinite bilinear form, namely the Sobolev metric $\Metric_{f_k}$; see \cref{sec:Optimization,sec:IRGNM} and, in particular, \cref{eq:LocalQuadraticProxyMetric}.

\subsection{Fréchet derivative of the forward operator}\label{sec:FrechetDerivativeForwardOperator}

Gauss--Newton-like methods require the Fréchet derivative of the forward operator $F$.
This is what we briefly recall here. 
We will come back to this in \cref{sec:Optimization}, where we will study the optimization algorithm in more detail.

For $h \in \Holder[1][][\varSigma][\R^3]$ we define the offset surface $\varSigma_h \ceq \myset{ x + h(x) }{ x \in \varSigma}$.
Due to \cite[Theorem~1.9]{hohage_diss} 
the map 
$F_{\on{loc}} \colon \Holder[1][][\varSigma][\R^3]\to \Lebesgue[2][][\S^2][\C]$, 
$h\mapsto u_{\varSigma_h,\infty}(\cdot,d,\kappa)$ is analytic, and in particular 
Fréchet-differentiable at  $h = 0$.
Moreover, \cite[Main~Corollary, Remark~1]{djellouli_formula} states that the Fréchet derivative $DF_{\on{loc}}(0)\,h$ in direction $h \in \Holder[1][][\varSigma][\R^3]$ (even for Lipschitz boundaries $\varSigma$ and Lipschitz perturbations $h \in \Holder[0,1][][\varSigma][\R^3]$) is given by the far field $w_{\varSigma,\infty}$ of the solution $w$ of
\begin{equation}\label{eq:derivative_bvp}
\begin{alignedat}{2}
\Delta w + \kappa^2w &= 0  
&&
\text{on}\ \R^3\!\setminus\! \overline{\varOmega},
\\
w(x) &= -\fdfrac{\partial u}{\partial \nu}(x) \inner{\nu(x),h(x)} \qquad
&&
\text{for all}\ x\in \varSigma,
\\
\lim_{r\to\infty} \, r \pars*{\fdfrac{\partial w}{\partial r}- \ii \, \kappa \, w} &=0
&&
\text{where $r=\abs{x}$.}
\end{alignedat}
\end{equation}  
Here $\nu \colon \varSigma \to \S^2$ denotes the outward pointing normal vector field of $\varSigma$.
Similar results for variations of a $C^2$-domain can be found  in \cite{zbMATH06061716,kirsch,zbMATH00562794,simon:80}.
One even can show that for any $k\in\mathbb{N}$ the operator $F_{\on{loc}} \colon \Holder[1][][\varSigma][\R^3]\to \Holder[k][][\S^2][\C]$ is Fréchet differentiable (\cite[Theorem 4.2]{djellouli}). Hence, by compactness of the embedding $\Holder[k][][\S^2][\C]\hookrightarrow 
\Lebesgue[2][][\S^2][\C]$,
 the linear operator $DF_{\on{loc}}(0) \colon \Holder[1][][\varSigma][\R^3] \to \Lebesgue[2][][\S^2][\C]$ is compact, underlining the fact that we are dealing with ill-posed problems.
The above carries over straight-forwardly to the many-wave forward operator.
For details see \cite[Theorem~3]{zbMATH05251341}.

For numerical computations it is convenient to use the integral equation \eqref{eq:directmethod} to compute $\partial u/\partial \nu$ in the course of evaluating the forward operator and the transposed equation \eqref{eq:integral} to evaluate $w$ in \cref{eq:derivative_bvp} or its far field pattern.


\section{The tangent-point energy}\label{sec:TPE}

Here and in the following we will always assume that $m$, $n \in \N$ satisfy $n < m$.
Let $\varSigma \subset \AmbSpace$ be a $C^1$-submanifold of dimension $n$ that is closed (i.e., compact and without boundary). 
For $1 \leq p < \infty$ its \emph{tangent-point energy} is given by the double integral
\begin{align}
	\label{eq:TPByRadius}
	\Energy_p(\varSigma)
	\ceq
	2^p
	\int_\varSigma \! \int_\varSigma 
		\frac{1}{r_{\TP}(\xi,\eta)^p} 
	\dd \HsdM^\DomDim(\xi) \dd \HsdM^\DomDim(\eta)
	,
\end{align}
where $\HsdM^\DomDim$ denotes the $n$-dimensional Hausdorff measure on $\AmbSpace$
and 
where $r_{\TP}(\xi,\eta)$ is the \emph{tangent-point radius}, i.e., the radius of the smallest sphere that passes through $\xi$ and $\eta$ and that is tangential to $\varSigma$ at the point $\xi$. 

This energy was proposed by Buck and Orloff in \cite{Buck:1995:ASE} and independently by Gonzales and Maddocks in \cite{MR1692638} as a self-avoidance energy for knots and links in $\R^3$.
This energy is intended to blow up to $\infty$ if a continuous family of knots gradually degenerates into  another knot class, e.g., by forming self-intersections.
The idea was that minimizing this energy within a knot class leads to ``nice'' representatives, i.e., relatively smooth curves with relatively large ``gaps'' between their strands.

In \cite{zbMATH06017037} Strzelecki and von~der~Mosel studied analytical properties of the tangent-point energy for curves.
Later \cite{zbMATH06203565}, they generalized the tangent-point energy to embedded manifolds of arbitrary dimension $\DomDim$ and proved fundamental results in the case that $p  > 2 \, \DomDim$: the regularization property (see \cref{thm:GeometricMorrey}) and the self-avoidance property (see \cref{rem:SelfAvoidance}).
Together with Kolasi\'nski~\cite{zbMATH07046194}, they established further important structural properties of the tangent-point energy, namely certain notions of rigidity, compactness of sublevel sets, and lower-semicontinuity. 
We will use these results in \cref{sec:Regularization} to prove that our reconstruction scheme is indeed regularizing.
Hence, we discuss these properties in more detail in \cref{sec:PropertiesTP}.

The tangent-point energies form only one of several families of functionals with interesting self-avoidance and regularization properties, e.g., the \emph{reach} or \emph{thickness} and their inverse, the \emph{ropelength}; \emph{O'Hara's energies}; and the \emph{Menger integral curvature energies}.
For more background and for an overview over the historic developments we refer the reader to the following survey by Strzelecki and von der Mosel \cite{zbMATH07063699}.

Due to their nonlocal nature, minimizing such self-avoidance functionals by numerical means has been a difficult and expensive task for a long time. 
The straight-forward approach to discretizing the defining double integral \cref{eq:TPByRadius} by a Riemann double sum leads to costs quadratic in the number of degrees of freedom. 
Hence, this approach is only feasible for $1$-dimensional manifolds. 
The tangent-point energy has been successfully applied as a hard barrier function against self-contact in the context of elastic knots in \cite{zbMATH07356032} and \cite{zbMATH06983824}.
A similar approach has been proposed in \cite{zbMATH06660993}, utilizing the tangent-point radius to construct a \emph{soft} penalty for the overlap of thickened curves.

For handling two-dimensional surfaces, the quadrature rule for the double integral in \cref{eq:TPByRadius} has to be adapted for reasons of computational costs. 
A first approach in this direction was made in~\cite{zbMATH07547922}, where a hierarchy of triangulations on the integration domain was employed to coarsen the quadrature rule.
Even more efficient discretizations based on tree-accelerated codes were developed in the series of papers \cite{RepulsiveC,RepulsiveS,Sassen:2024:RS}, and these are the discretizations that we employ in our numerical simulations.
The crucial ingredient for the numerical optimization of $\Energy_p$ is to know the \emph{right} Banach space to work in, the so-called \emph{energy space}; see \cref{sec:SoboSloboSpaces,,sec:SoboSloboSpacesOnManifolds,,sec:EnergySpaces}.
Moreover, we have to study the differential of the energy; see \cref{sec:DEnergy}.
This allows us to construct a suitable \emph{Sobolev metric} (cf.~\cite{RepulsiveC,RepulsiveS}) and a computational feasible preconditioner (see ~\cite{RepulsiveS}). 
This metric and its preconditioner provide us with a conceptually simple, robust, and quick optimization method via steepest descent, see~\cref{sec:Preconditioner}.

\subsection{Rigidity, compactness, lower semicontinuity}
\label{sec:PropertiesTP}

Here we collect a couple of properties of the tangent-point energy that make it suitable as regularizer for surface reconstruction problems.
The first of these properties is fundamental to all the others.
In their landmark paper \cite{zbMATH06203565}, Strzelecki and von~der~Mosel showed that the tangent-point energy has the following regularizing effect; see \cite[Theorem 1.4]{zbMATH06203565}:

\begin{btheorem}[Geometric Sobolev–Morrey embedding]\label{thm:GeometricMorrey}
	Let $2 \, \DomDim < p < \infty$ such that $\alpha \ceq 1 - 2 \, \DomDim/p > 0$.
	For each $0 \leq E < \infty$ there are $r = r(p,E,\DomDim,\AmbDim)> 0$ and $0 < L = L(p,E,\DomDim,\AmbDim) <\infty$ such that the following holds true:
	\newline
	Each $\DomDim$-dimensional, closed $\Holder[1]$-submanifold $\varSigma \subset \AmbSpace$ satisfying
	$
		\Energy_p(\varSigma) \leq E
	$
	can be uniformly parameterized by graph patches of class $\Holder[1,\alpha]$ in the following sense:
	For each $\xi \in \varSigma$ the patch $\OpenBall{\xi}{r} \cap \varSigma$ can be parameterized by some function $h \colon U \subset T_\xi \varSigma \to \pars{T_\xi\varSigma^\perp}$ satisfying 
	\begin{align*}
		h(0) = \xi,
		\;\;
		D h(0) = 0,
		\;\;
		\Lip(h) \leq 1, 
		\;\; \text{and} \;\; 
		\seminorm{Dh}_{\Holder[0,\alpha]}
		\ceq 
		\smash{
			\esssup_{\eta,\, \zeta \in U} \frac{\abs{Dh(\zeta) - Dh(\eta)}}{\abs{\zeta - \eta}^\alpha}
		}
		\leq L.
	\end{align*}
	In particular, finite energy $\Energy_p(\varSigma) < \infty$ implies that $\varSigma$ is a manifold of class $\Holder[1,\alpha]$.
\end{btheorem}

Note that (i) the radii of the graph patches are uniformly bounded from below by $r > 0$
and that (ii) the modulus of continuity of the parameterizations are uniformly controlled.
This makes closed submanifolds with bounded tangent-point energy surprisingly rigid:
As soon as two such manifolds are sufficiently close in the Hausdorff distance $\HsdD$, they are already diffeomorphic and ambient isotopic; see \cite[Section~4]{zbMATH07046194}:

\begin{theorem}[Rigidity]\label{thm:Rigidity}
	Let $2 \, \DomDim < p < \infty$ such that $\alpha \ceq 1 - 2 \, \DomDim/p > 0$.
	Let $0 < R < \infty$, and $0 < E < \infty$. 
	Then there exist $\delta = \delta(R,E,\DomDim,\AmbDim) > 0$ and $C = C(R,E,\DomDim,\AmbDim) > 0$ such that the following holds true:
	Let $\varSigma \subset \AmbSpace$ and $\varGamma \subset \AmbSpace$ be two $\DomDim$-dimensional, closed $\Holder[1]$-submanifolds satisfying
	\begin{align*}
		\varSigma, \, \varGamma  \subset \ClosedBall{0}{R},
		\quad
		\Energy_p(\varSigma), \, \Energy_p(\varGamma) \leq E,
		\qand 
		\HsdD(\varGamma,\varSigma) \leq \delta
		.
	\end{align*}
	Then there is a $\Holder[1]$-diffeomorphism $\diffeo \colon \AmbSpace \to \AmbSpace$ such that $\diffeo$ is isotopic to $\id_{\AmbSpace}$ and such that
	\begin{align*}
		\diffeo(\varSigma) = \varGamma,
		\;\;
		\norm{\diffeo - \id_{\AmbSpace}}_{\Lebesgue[\infty]} \leq C \, \HsdD(\varGamma,\varSigma),
		\;\;\text{and}\;\;
		\norm{D(\diffeo - \id_{\AmbSpace})}_{\Lebesgue[\infty]}
			\leq 
			C \, \HsdD(\varGamma,\varSigma)^{\alpha/2}.
	\end{align*}
\end{theorem}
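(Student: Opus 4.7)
The plan is to leverage \cref{thm:GeometricMorrey} to reduce the problem to a local normal-graph comparison, and then to glue the local data into an ambient $\Holder[1]$-diffeomorphism via a tubular-neighborhood construction.

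First I would apply \cref{thm:GeometricMorrey} to both $\varSigma$ and $\varGamma$ to obtain a uniform graph-patch radius $r > 0$ and a uniform Hölder bound $L$ on the derivatives of the parameterizing functions. The standard $\Holder[1,\alpha]$-graph estimates then provide a lower bound $\rho = \rho(r,L) > 0$ on the reach of both manifolds, so that the nearest-point projections $\pi_\varSigma$ and $\pi_\varGamma$ are well-defined $\Holder[1]$-maps on their respective $\rho$-tubes. Taking $\delta \leq \rho/4$ forces $\varGamma$ into the $\rho$-tube of $\varSigma$ and vice versa, which lets me represent $\varGamma$ as a normal graph over $\varSigma$: a continuous vector field $s$ along $\varSigma$ with $s(\xi) \in T_\xi\varSigma^\perp$ for every $\xi$, such that $\varGamma = \myset{\xi + s(\xi)}{\xi \in \varSigma}$ and $\norm{s}_{\Lebesgue[\infty]} \leq C \, \HsdD(\varSigma,\varGamma)$ with $C = C(L)$.

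The main work---and also the main obstacle---is to upgrade the $\Lebesgue[\infty]$-smallness of $s$ to a $\Holder[1]$-smallness with the claimed exponent $\alpha/2$. Working in a fixed chart centered at $\xi \in \varSigma$, I would realize both surfaces as graphs of $\Holder[1,\alpha]$-functions $h_\varSigma$, $h_\varGamma \colon U \subset T_\xi \varSigma \to T_\xi\varSigma^\perp$ with $\seminorm{Dh_\varSigma}_{\Holder[0,\alpha]}, \seminorm{Dh_\varGamma}_{\Holder[0,\alpha]} \leq L$ by \cref{thm:GeometricMorrey}. The Hausdorff closeness yields $\norm{h_\varSigma - h_\varGamma}_{\Lebesgue[\infty](U)} \leq C \, \delta$, and the standard Hölder interpolation inequality
\[
\norm{Du}_{\Lebesgue[\infty]}
\leq
C \, \pars[\big]{
    \norm{u}_{\Lebesgue[\infty]}^{\alpha/(1+\alpha)} \,
    \seminorm{Du}_{\Holder[0,\alpha]}^{1/(1+\alpha)}
    + \norm{u}_{\Lebesgue[\infty]}
}
\]
applied to $u \ceq h_\varSigma - h_\varGamma$ delivers $\norm{D(h_\varSigma - h_\varGamma)}_{\Lebesgue[\infty]} \leq C \, \delta^{\alpha/(1+\alpha)}$. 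Since $\alpha < 1$ and $\delta$ is small, $\delta^{\alpha/(1+\alpha)} \leq \delta^{\alpha/2}$, which is the (weaker) bound asserted in the theorem. Transporting this estimate back from the chart to $\varSigma$ and patching across the finitely-many charts needed to cover the compact set $\varSigma$ gives $\norm{Ds}_{\Lebesgue[\infty]} \leq C \, \delta^{\alpha/2}$.

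Finally, I would promote $s$ to a global $\Holder[1]$-diffeomorphism of $\AmbSpace$ by a tubular cut-off. Pick a smooth $\chi \colon \intervalco{0,\infty} \to \intervalcc{0,1}$ with $\chi \equiv 1$ near $0$ and $\supp(\chi) \subset \intervalco{0,\rho/2}$, and define
\[
\diffeo(x) \ceq x + \chi\pars[\big]{\dist(x,\varSigma)} \, s\pars[\big]{\pi_\varSigma(x)}
\]
on the $\rho$-tube of $\varSigma$ and $\diffeo(x) \ceq x$ elsewhere. Using the $\Holder[1]$-regularity of $\pi_\varSigma$ and $\dist(\cdot,\varSigma)$ together with the bounds on $s$ and $Ds$, one verifies that $\norm{\diffeo - \id_\AmbSpace}_{\Lebesgue[\infty]} \leq C \, \HsdD(\varSigma,\varGamma)$ and $\norm{D(\diffeo - \id_\AmbSpace)}_{\Lebesgue[\infty]} \leq C \, \HsdD(\varSigma,\varGamma)^{\alpha/2}$. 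For $\delta$ small the derivative bound renders $\diffeo$ an ambient $\Holder[1]$-diffeomorphism by the inverse function theorem, $\diffeo(\varSigma) = \varGamma$ holds by construction, and the straight-line family $\diffeo_t(x) \ceq x + t \, \chi(\dist(x,\varSigma)) \, s(\pi_\varSigma(x))$ furnishes an isotopy to $\id_\AmbSpace$ through $\Holder[1]$-diffeomorphisms by the very same estimate.
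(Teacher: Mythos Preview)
The paper does not supply its own proof of this theorem; it defers entirely to Theorem~4 and Lemma~4.10 of Kolasi\'nski, Strzelecki, and von~der~Mosel~\cite{zbMATH07046194}. Your outline follows the same architecture as that reference---uniform graph patches from \cref{thm:GeometricMorrey}, normal-graph representation of $\varGamma$ over $\varSigma$, H\"older interpolation for the derivative bound, and a tubular cut-off extension---so the overall strategy is correct.

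There is, however, a real gap at the heart of your argument. In the second paragraph you write both surfaces as graphs $h_\varSigma, h_\varGamma \colon U \subset T_\xi \varSigma \to T_\xi\varSigma^\perp$ and invoke \cref{thm:GeometricMorrey} for the bound $\seminorm{Dh_\varGamma}_{\Holder[0,\alpha]} \leq L$. But \cref{thm:GeometricMorrey} only asserts that $\varGamma$ is a graph with controlled $\Holder[1,\alpha]$-data over \emph{its own} tangent plane $T_\eta\varGamma$ at a nearby $\eta \in \varGamma$, not over $T_\xi\varSigma$. The same issue already appears in your first paragraph: for the normal section $s$ to be a single-valued $\Holder[1]$-field on $\varSigma$ you need $\pi_\varSigma|_\varGamma$ to be a diffeomorphism, which again presupposes that $T_\eta\varGamma$ is close to $T_\xi\varSigma$. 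That tangent-plane alignment is precisely what you are trying to prove, so the argument is circular as written. Establishing it independently---roughly, showing that if the planes were far apart then the flat $\Holder[1,\alpha]$-graph of $\varGamma$ over $T_\eta\varGamma$ could not stay within Hausdorff distance $\delta$ of $\varSigma$ across the full patch of radius $r$---is exactly the content of Lemma~4.10 in~\cite{zbMATH07046194} and is the substantive step you are missing. Once that alignment is in hand, your interpolation inequality, the exponent comparison $\alpha/(1+\alpha) \geq \alpha/2$, and the tubular extension all go through as you describe.
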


For details, we refer the readers to the original paper \cite{zbMATH07046194} by Polanski, Strzelecki, and  von~der~Mosel, in particular to Theorem 4 and Lemma 4.10 therein.

\begin{remark}\label{rem:SelfAvoidance}
	\cref{thm:Rigidity} also implies the self-avoidance property of $\Energy$:
	Let $\varSigma(t)$ be a $\HsdD$-continuous path $\varSigma(t)$ of closed, $\DomDim$-dimensional $\Holder[1]$-submanifolds satisfying $\Energy_p(\varSigma(t)) \leq E$ for all $t \in \intervalcc{0,1}$.
	Such a path cannot change the ambient isotopy class. 
	Indeed, since $\intervalcc{0,1}$ is compact, the path is a compact set with respect to $\HsdD$. Hence, there must exist $R>0$ such that $\varSigma(t) \subset \ClosedBall{0}{R}$ for all $t \in \intervalcc{0,1}$.
	Moreover, we can find finitely many $0 = t_0 < t_1 < \dotsm < t_N = 1$ such that $\HsdD( \varSigma(t_i), \varSigma(t_{i+1}) ) \leq \delta$. So \cref{thm:Rigidity} implies that $\varSigma(0) = \varSigma(t_0) \cong \varSigma(t_1) \cong \dotsm \cong \varSigma(t_K) = \varSigma(1)$ are all ambient isotopic.
	Put differently: when a change of topology (for example a self-intersection) occurs, then the tangent-point energy must blow up to $\infty$.
	
	This can be a blessing as well as a curse: On the one hand, the tangent-point energy is a barrier function for connected components of the space of embeddings. 
	It allows us to deal with topological constraints like \emph{being embedded} or \emph{fixing the ambient isotopy class}. 
	This can be desirable in many applications.
	On the other hand, one has to start the optimization in the right isotopy class. 
\end{remark}

Next we deal with a compactness result for sublevel sets of the tangent-point energy from \cite[Theorem~3]{zbMATH07046194}.
This result will be used in \cref{thm:Existence}, where we employ the direct method of calculus of variations in order to show existence of minimizers to the regularized inverse scattering problem.

\begin{theorem}[Compactness]\label{thm:Compactness}
	Let $2 \, \DomDim < p < \infty$ such that $\alpha \ceq 1 - 2 \, \DomDim/p > 0$. Let $0 < R < \infty$ and $0 < E < \infty$. 
	For each $k \in \N$ let $\varSigma_k \subset \AmbSpace$ be a closed, $n$-dimensional $\Holder[1]$-submanifold
	satisfying
	\begin{equation*}
		\varSigma_k \subset \ClosedBall{0}{R}
		\qand
		\Energy_p(\varSigma_k) \leq E
		.
	\end{equation*}
	Then there is a subsequence $(\varSigma_{k_\ell})_{\ell \in \N}$ and a closed, $n$-dimensional $\Holder[1]$-submanifold $\varSigma$
	such that
	\begin{equation*}
		\varSigma \subset \ClosedBall{0}{R}
		,
		\quad
		\Energy_p(\varSigma) \leq E
		,
		\qand
		\HsdD(\varSigma_{k_\ell},\varSigma) \converges[\ell \to \infty] 0
		.
	\end{equation*}
	Let $\delta = \delta(R,E,\DomDim,\AmbDim) > 0$ be the constant from \cref{thm:Rigidity}.
	As soon as $\HsdD(\varSigma_{k_\ell}, \varSigma) < \delta$, there are $\Holder[1]$-diffeomorphisms $\diffeo_\ell \colon \AmbSpace \to \AmbSpace$, each isotopic to $\id_{\AmbSpace}$, such that 
	\begin{equation}
		\diffeo_\ell(\varSigma) = \varSigma_{k_\ell}
		,
		\quad
		\norm{\diffeo_\ell - \id_\AmbSpace}_{\Lebesgue[\infty][][\AmbSpace]} \converges[\ell \to \infty] 0
		\qand
		\norm{D(\diffeo_\ell - \id_\AmbSpace)}_{\Lebesgue[\infty][][\AmbSpace]} \converges[\ell \to \infty] 0
		.	
		\label{eq:ConvergenceC1}
	\end{equation} 
	Hence, eventually, the surfaces $\varSigma_{k_\ell}$ are ambient isotopic to~$\varSigma$.
\end{theorem}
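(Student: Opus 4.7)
The plan is to combine a Blaschke-type compactness argument for closed subsets of $\ClosedBall{0}{R}$ with the uniform $C^{1,\alpha}$-graph parameterizations provided by \cref{thm:GeometricMorrey}, and then to close the loop by invoking the lower semicontinuity of $\Energy_p$ under Hausdorff convergence together with the rigidity theorem \cref{thm:Rigidity}.

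First, since each $\varSigma_k$ is a closed subset of the compact set $\ClosedBall{0}{R}$, the Blaschke selection theorem yields a subsequence, still denoted $\varSigma_{k_\ell}$, and a nonempty closed set $\varSigma \subset \ClosedBall{0}{R}$ with $\HsdD(\varSigma_{k_\ell},\varSigma) \to 0$. The core task is to show that $\varSigma$ is in fact a closed $C^{1,\alpha}$-submanifold of dimension $n$. To that end, fix any $\xi_* \in \varSigma$ and choose $\xi_\ell \in \varSigma_{k_\ell}$ with $\xi_\ell \to \xi_*$. By \cref{thm:GeometricMorrey} applied to each $\varSigma_{k_\ell}$, there exist $r,L>0$ independent of $\ell$ and graph maps $h_\ell\colon U_\ell \subset T_{\xi_\ell}\varSigma_{k_\ell} \to T_{\xi_\ell}\varSigma_{k_\ell}^\perp$ parameterizing $\OpenBall{\xi_\ell}{r} \cap \varSigma_{k_\ell}$, with $h_\ell(0)=\xi_\ell$, $Dh_\ell(0)=0$, $\Lip(h_\ell)\leq 1$, and $[Dh_\ell]_{\Holder[0,\alpha]} \leq L$. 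Passing to a further subsequence, the tangent planes $T_{\xi_\ell}\varSigma_{k_\ell}$ converge in the Grassmannian $G(n,m)$ to some $n$-plane $T_*$. Pulling back the $h_\ell$ to a fixed neighborhood $U \subset T_*$ (say via orthogonal projection onto $T_*$, which is a diffeomorphism of small graph domains once the tangent planes are close enough), we obtain an equi-Lipschitz sequence with uniform $C^{1,\alpha}$-bounds on the derivatives. The Arzelà–Ascoli theorem then produces a further subsequence converging in $C^1(U)$ to some $h_* \in \Holder[1,\alpha][][U][T_*^\perp]$ with $h_*(0) = \xi_*$, $Dh_*(0)=0$, and $[Dh_*]_{\Holder[0,\alpha]} \leq L$. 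Hausdorff convergence forces the graph of $h_*$ to coincide with $\varSigma \cap \OpenBall{\xi_*}{r'}$ for some $0 < r' < r$: points on the graph are limits of points on the graphs of $h_\ell$, hence lie in $\varSigma$, and conversely any point of $\varSigma$ near $\xi_*$ is a limit of points of $\varSigma_{k_\ell}$ near $\xi_*$, which are contained in the graphs of the $h_\ell$. This establishes that $\varSigma$ is an $n$-dimensional $\Holder[1,\alpha]$-submanifold near $\xi_*$, and closedness follows from its being a Hausdorff limit in $\ClosedBall{0}{R}$.

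Next, the energy bound $\Energy_p(\varSigma) \leq E$ is obtained from the lower semicontinuity of $\Energy_p$ with respect to Hausdorff convergence on the class of closed $n$-dimensional $\Holder[1]$-submanifolds in $\ClosedBall{0}{R}$, which is part of the structural machinery from \cite{zbMATH07046194}. Concretely, one writes $\Energy_p$ as a supremum of integrals of lower semicontinuous truncated integrands against the Hausdorff measures $\HsdM^n \lfloor \varSigma_{k_\ell}$, and uses weak-$*$ convergence of these measures (a consequence of Hausdorff convergence of the supports together with the uniform graph control from \cref{thm:GeometricMorrey}, which provides uniform $\HsdM^n$-density bounds) to pass to the limit in the lower bound.

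Finally, once $\ell$ is large enough that $\HsdD(\varSigma_{k_\ell},\varSigma) < \delta$, where $\delta = \delta(R,E,n,m) > 0$ is the constant from \cref{thm:Rigidity}, we apply that theorem to the pair $\varSigma$ and $\varSigma_{k_\ell}$—both lie in $\ClosedBall{0}{R}$ and satisfy the energy bound. This produces diffeomorphisms $\diffeo_\ell \colon \AmbSpace \to \AmbSpace$, isotopic to the identity, with $\diffeo_\ell(\varSigma) = \varSigma_{k_\ell}$ and
\begin{equation*}
    \norm{\diffeo_\ell - \id_{\AmbSpace}}_{\Lebesgue[\infty]} \leq C\,\HsdD(\varSigma_{k_\ell},\varSigma),
    \quad
    \norm{D(\diffeo_\ell - \id_{\AmbSpace})}_{\Lebesgue[\infty]} \leq C\,\HsdD(\varSigma_{k_\ell},\varSigma)^{\alpha/2},
\end{equation*}
so that both right-hand sides tend to zero, giving \cref{eq:ConvergenceC1}. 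Ambient isotopy of $\varSigma_{k_\ell}$ and $\varSigma$ is built into the conclusion of \cref{thm:Rigidity}.

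The main obstacle is the verification that the Hausdorff limit is genuinely a $C^{1,\alpha}$-submanifold rather than merely a closed set—this is where one must carefully extract graph parameterizations, handle the variable base planes $T_{\xi_\ell}\varSigma_{k_\ell}$ via a Grassmannian compactness argument, and use Arzelà–Ascoli in the $C^{1,\alpha}$-class. The lower semicontinuity step is a second delicate point, though for this we appeal directly to the result of Kolasi\'nski, Strzelecki, and von~der~Mosel.
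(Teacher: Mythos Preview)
The paper does not give its own proof of this theorem; it is stated as a citation of \cite[Theorem~3]{zbMATH07046194} (Kolasi\'nski, Strzelecki, and von~der~Mosel), with the second paragraph of the statement being an immediate corollary of the rigidity result \cref{thm:Rigidity}. So there is no in-paper argument to compare against beyond that attribution.

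Your sketch is a reasonable outline of how such a result is established and is in the spirit of the cited work: Blaschke selection to extract a Hausdorff-convergent subsequence, the uniform $C^{1,\alpha}$ graph patches from \cref{thm:GeometricMorrey} combined with Grassmannian compactness and Arzel\`a--Ascoli to identify the limit as a $C^{1,\alpha}$-submanifold, lower semicontinuity for the energy bound, and finally \cref{thm:Rigidity} for the ambient diffeomorphisms. One small redundancy: the lower semicontinuity result you invoke (stated in the paper as \cref{thm:SemiContinuity}, also from \cite{zbMATH07046194}) already asserts that a Hausdorff limit with $\liminf \Energy_p < \infty$ is a closed $C^1$-submanifold, so if you are willing to quote it you can bypass the separate graph-convergence argument. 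Conversely, if you want a self-contained argument, the lower semicontinuity step itself needs more than the one-line justification you give; the weak-$*$ convergence of the area measures and the handling of the nonlocal, singular integrand require real work (this is a substantial part of \cite{zbMATH07046194}).
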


Notice the additional requirement that the submanifolds $\varSigma_k$ must stay in the prescribed, bounded set $\ClosedBall{0}{R}$ and cannot escape to ``infinity''.
An energy bound alone cannot prevent that because $\Energy_p$ is translation invariant. 
Moreover, the tangent-point energy goes \emph{down} when scaling up, i.e., $\Energy_p(k \cdot \varSigma) \to 0$, as $k \to \infty$, which also has to be prevented somehow.

\begin{definition}\label{dfn:ConvergenceC1}
	For convenience, we will refer to the notion of convergence from \cref{eq:ConvergenceC1} by saying that $\varSigma_k \to \varSigma$ \emph{in $\Holder[1]$}.
	We would also like to point out that \cref{eq:ConvergenceC1} also implies that 
	\begin{equation*}
		\diffeo_\ell^{-1}(\varSigma_{k_\ell}) = \varSigma
		,  
		\quad
		\norm{\diffeo_\ell^{-1} - \id_\AmbSpace}_{\Lebesgue[\infty][][\AmbSpace]} 
		\converges[\ell \to \infty] 0
		, 
		\qand
		\norm{D(\diffeo_\ell^{-1} - \id_\AmbSpace)}_{\Lebesgue[\infty][][\AmbSpace]} 
		\converges[\ell \to \infty] 0.	
	\end{equation*}
\end{definition}

The final ingredient that we require for Tikhonov regularization is the following statement on lower semicontinuity; see \cite[Theorem 2 (i)]{zbMATH07046194}.

\begin{theorem}[Lower semicontinuity]\label{thm:SemiContinuity}
	Let $2 \, \DomDim < p < \infty$ such that $1 - 2 \, \DomDim/p > 0$. 
	Let $\varSigma_k \subset \AmbSpace$, $k \in \N$ be closed, $\DomDim$-dimensional $\Holder[1]$-submanifolds.
	Suppose that there is a compact subset $\varSigma \subset \AmbSpace$ such that
	\begin{equation*}
		\smash{\liminf_{k \to \infty}} \; \Energy_p(\varSigma_k) < \infty 
		\qand  
		\HsdD(\varSigma_k,\varSigma) \converges[k \to \infty] 0.
	\end{equation*}
	Then $\varSigma$ is also a closed, $\DomDim$-dimensional $\Holder[1]$-submanifold, and we have
	\begin{equation*}
		\Energy_p(\varSigma) 
		\leq 
		\smash{\liminf_{k \to \infty}} \; \Energy_p(\varSigma_k)
		.
	\end{equation*}
\end{theorem}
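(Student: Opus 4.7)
The strategy is to reduce the statement to the already-stated Compactness theorem by a standard extraction argument. Let $L \ceq \liminf_{k\to\infty} \Energy_p(\varSigma_k) < \infty$, and pass to a subsequence (not relabeled) along which $\Energy_p(\varSigma_k) \to L$; after discarding finitely many terms, we may assume $\Energy_p(\varSigma_k) \leq L+1$ for all $k$. Since $\HsdD(\varSigma_k,\varSigma) \to 0$ and $\varSigma$ is compact, the Hausdorff-convergent family $\{\varSigma_k\} \cup \{\varSigma\}$ is totally bounded in $\AmbSpace$, so there exists $R < \infty$ with $\varSigma_k, \, \varSigma \subset \ClosedBall{0}{R}$ for all $k$.

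Now the hypotheses of \cref{thm:Compactness} are met with the uniform bounds $R$ and $E \ceq L+1$. It produces a further subsequence $(\varSigma_{k_\ell})_\ell$ and a closed $n$-dimensional $\Holder[1]$-submanifold $\tilde\varSigma \subset \ClosedBall{0}{R}$ such that $\Energy_p(\tilde\varSigma) \leq L+1$ and $\HsdD(\varSigma_{k_\ell}, \tilde\varSigma) \to 0$. Since the Hausdorff distance is a bona fide metric on the space of nonempty compact subsets of $\AmbSpace$, limits are unique, and from $\HsdD(\varSigma_{k_\ell},\varSigma)\to 0$ we conclude $\tilde\varSigma = \varSigma$. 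In particular, $\varSigma$ is indeed a closed $n$-dimensional $\Holder[1]$-submanifold, which is the first claim.

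For the semicontinuity estimate, apply the same argument once more but \emph{starting} from an arbitrary subsequence of the original one: given any subsequence $(\varSigma_{k_j})_j$, after passing to a sub-subsequence we may assume $\Energy_p(\varSigma_{k_j})$ converges to some $L' \in \intervalcc{L, \infty}$, and we may again extract (via \cref{thm:Compactness} with $E = L'+1$) a further sub-subsequence $(\varSigma_{k_{j_\ell}})_\ell$ converging in Hausdorff distance to a $\Holder[1]$-submanifold which, by uniqueness of Hausdorff limits, must equal $\varSigma$, and which satisfies $\Energy_p(\varSigma) \leq L'+1$. Taking the infimum over admissible $E$ in \cref{thm:Compactness} — more precisely, applying it with $E = \Energy_p(\varSigma_{k_j}) + \varepsilon$ for a sequence $\varepsilon\downarrow 0$ — upgrades this to $\Energy_p(\varSigma) \leq L' = \lim_\ell \Energy_p(\varSigma_{k_{j_\ell}})$. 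Since this can be done along any subsequence that realizes $L$, we obtain $\Energy_p(\varSigma) \leq L = \liminf_{k\to\infty} \Energy_p(\varSigma_k)$.

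\textbf{Main obstacle.} The substantive content is already packaged inside \cref{thm:Compactness}; the only real subtlety is verifying that the sub-subsequence limit produced there coincides with $\varSigma$ (done by uniqueness of Hausdorff limits) and ensuring the energy bound transfers sharply. The latter is a cosmetic issue: one could either apply \cref{thm:Compactness} with the sharp threshold $E = \Energy_p(\varSigma_{k_j})$ directly (if the cited theorem tolerates equality, which its statement does), or iterate with $E = \Energy_p(\varSigma_{k_j}) + \varepsilon$ and send $\varepsilon \to 0$ afterwards. No new analytic ingredient beyond the compactness theorem is needed.
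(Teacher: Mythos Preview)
The paper does not give its own proof of this theorem; it is quoted from \cite[Theorem~2~(i)]{zbMATH07046194} and used as a black box. So there is no ``paper's proof'' to compare against.

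Your argument is formally valid as stated --- you correctly extract the semicontinuity inequality from the conclusion $\Energy_p(\varSigma)\leq E$ in \cref{thm:Compactness} via a tail-and-$\varepsilon$ trick, and the identification $\tilde\varSigma=\varSigma$ by uniqueness of Hausdorff limits is fine. However, the reduction is almost certainly circular at the level of the underlying reference: in \cite{zbMATH07046194} the lower semicontinuity (Theorem~2) is established \emph{before} the compactness result (Theorem~3), and the energy bound $\Energy_p(\varSigma)\leq E$ that you lean on is precisely the place where lower semicontinuity enters the proof of compactness. In other words, \cref{thm:Compactness} as stated here already packages \cref{thm:SemiContinuity} inside it, and you are just unpacking it again. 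So while your derivation is a legitimate observation about the logical dependencies between the two quoted statements \emph{in this paper}, it does not constitute an independent proof of lower semicontinuity.

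A minor stylistic point: the phrase ``applying it with $E = \Energy_p(\varSigma_{k_j}) + \varepsilon$'' is awkward because $E$ must be uniform in $j$; what you mean (and effectively do) is to fix $\varepsilon>0$, pass to the tail where $\Energy_p(\varSigma_{k_j})\leq L+\varepsilon$, apply \cref{thm:Compactness} with $E=L+\varepsilon$, and then let $\varepsilon\downarrow 0$.
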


\subsection{\SoboSlobo spaces}\label{sec:SoboSloboSpaces}

The above statements on $\Holder[1]$- and $\Holder[1,\alpha]$-regularity are sufficient to set up the Tiknonov regularization for the inverse scattering problem \cref{eq:Tikhonov}. 
However, for the numerical optimization it is important to know the energy space of the tangent-point energy more precisely; this is crucial for designing suitable preconditioners that make the numerical treatment practicable.

As it turns out, the ``right'' spaces  to use are \SoboSlobo spaces. 
These are one of several possible ways to generalize the broadly known Sobolev spaces to noninteger order of differentiability.
We will discuss this in more detail in \cref{sec:EnergySpaces} below.
For now, we start by defining the \SoboSlobo norm of a measurable function $u \colon U \to \TargetSpace$
on an open set $U\subset\DomSpace$ with values in a finite-dimensional Hilbert space $(\TargetSpace,\inner{\cdot,\cdot}, \abs{\cdot})$. 
The space $H$ will typically be $\R$, $\AmbSpace$, or $\Hom(\AmbSpace;\AmbSpace)$, the latter equipped with the Frobenius inner product.

For $p \in \intervalco{1,\infty}$ and $\sigma\in \intervaloo{0,1}$ we define the \emph{\SoboSlobo space} $\Sobo[\sigma,p][][U][\TargetSpace]$ to be the vector space of functions $u\in \Lebesgue[p][][U][\TargetSpace]$ such that the following semi-norm is finite:
\begin{equation}
	\seminorm{u}_{\Sobo[\sigma,p]}
	\ceq 
	\norm{\deltaOp[\sigma] u}_{\Lebesgue[p][][U\times U](\mu)}
	\ceq
	\pars*{
		\int_U \! \int_U
			\abs[\big]{ \deltaOp[\sigma] u(x,y)}^p
		\dd \mu(x,y)
	}^{1/p}
	,
	\label{eq:SoboSloboNormEulcidean}
\end{equation}
where we define 
\begin{equation}
	\deltaOp[\sigma] u(x,y) \ceq \frac{ u(y)-u(x) }{ \abs{y-x}^{\sigma} }
	\qand 
	\dd \mu(x,y) = \frac{\dd x \dd y}{\abs{y-x}^\DomDim}
	.
	\label{eq:deltaOpAndmu}
\end{equation}
In the case $p = \infty$, we put $\seminorm{u}_{\Sobo[\sigma,\infty]} \ceq \norm{ \deltaOp[\sigma] v }_{\Lebesgue[\infty][][U \times U]}$, rendering $\Sobo[\sigma,\infty][][U][\TargetSpace]$ identical to the Hölder space $\Holder[0,\sigma][][U][\TargetSpace]$.
Together with the norm $\norm{u}_{\Sobo[\sigma,p]} \ceq \norm{u}_{\Lebesgue[p]} + \seminorm{Du}_{\Sobo[\sigma,p]}$, the space $\Sobo[\sigma,p][][U][\TargetSpace]$ is a Banach space. It is reflexive if and only if $1 < p < \infty$.

For $s \in \intervaloo{1,2}$ we define the \emph{\SoboSlobo space} $\Sobo[s,p][][U][\TargetSpace]$ to be the vector space of functions $u\in \Sobo[1,p][][U][\TargetSpace]$ such that $\seminorm{Du}_{\Sobo[s-1,p]} < \infty$. 
Equipped with the norm
\begin{equation*}
	\norm{u}_{\Sobo[s,p]}
	\ceq
	\norm{u}_{\Sobo[1,p]} + \seminorm{Du}_{\Sobo[s-1,p]}
	=
	\norm{u}_{\Lebesgue[p]}
	+
	\norm{Du}_{\Lebesgue[p]} + \seminorm{Du}_{\Sobo[s-1,p]}
	,
\end{equation*}
it becomes a Banach space. It is reflexive if and only if $1 < p < \infty$.

As it is common practice, we define $\Sobo[s,p][0][U][\TargetSpace]$ for $0 < s < 2$ to be the closure of the subspace
$
	\myset{ u \in \Sobo[s,p][][U][\TargetSpace] }{ \supp(u) \subset\subset U} 
$
in the norm $\norm{\cdot}_{\Sobo[s,p]}$ and denote by $\Sobo[-s,p][][U][\TargetSpace]$ its dual space equipped with the dual norm 
\begin{equation*}
	\norm{\xi}_{\Sobo[-s,p][][U]}
	\ceq 
	\sup_{u \in \Sobo[s,p][0][U][\TargetSpace] \setminus \braces{0}}
	\frac{ \inner{\xi , u } }{ \norm{u}_{\Sobo[s,p][][U]} }
	.
\end{equation*}
As introductory text to fractional Sobolev spaces we recommend the excellent book \cite{zbMATH07647941} by Leoni.

\subsection{\SoboSlobo spaces on manifolds}\label{sec:SoboSloboSpacesOnManifolds}

From now on, $\Domain$ will denote a compact $\DomDim$-dimensional smooth manifold without boundary.
We may define \SoboSlobo spaces on $\Domain$ in various ways:
A~first approach is to define \SoboSlobo functions locally via charts and to patch these together by subordinate partitions of unity; see \cite{SobMan}. However, this approach is quite elaborate and also somewhat unnatural for our application.

As a second method we could employ a (sufficiently smooth) Riemannian metric on $\Domain$ to make sense of integration and to use the Riemannian distance function as replacement for $\abs{y-x}$. For $0 < \sigma < 1$, this allows a straight-forward generalization of $\deltaOp[\sigma] u$ and of the semi-norm $\seminorm{u}_{\Sobo[\sigma,p]}$.
But for generalizing $\seminorm{u}_{\Sobo[s,p]}$, $1 < s < 2$ we would need a replacement for $\deltaOp[\sigma] D u(x,y)$; and this would require us to take the difference of differentials $Du(x) \in T_x^* \Domain$ and $Du(y) \in T_y^* \Domain$, which live in quite different cotangent spaces. 
While this could be done via parallel transport, it would lead to tremendous technical overhead.

We therefore choose a more elementary approach by defining fractional Sobolev norms dependent on a (sufficiently smooth) embedding $f \colon \Domain \to \AmbSpace$. 
As we deal with embedded manifolds anyway, this is also more natural for our application. 
Let us denote the Riemannian pullback metric on $M$ induced by $f$ by
\begin{equation}\label{eq:RiemannianMetric}
	g_f(x)(X,Y) 
	\ceq 
	\inner{\dd f(x) \, X, \dd f(x) \, Y}_{\mathbb{R}^{m}}, \quad \text{for all $X$, $Y \in T_xM$.}
\end{equation}
Then the Riemannian volume measure  $\vol_f$ associated to $g_f$ allows us to perform integration.
For a measurable map $u \colon \Domain \to \TargetSpace$ we define the Lebesgue norms
\begin{equation*}
	\norm{u}_{\Lebesgue[p](f)} 
	\ceq 
	\pars*{ \int_\Domain \abs{u(x)}^p \dvol_f(x)}^{1/p},
	\quad 
	1 \leq p < \infty
	\qand
	\norm{u}_{\Lebesgue[\infty]}
	\ceq \esssup_{x\in\Domain} \abs{u(x)}
	.
\end{equation*}
As a notion of distance, we can simply employ the chord distance $\abs{f(y)-f(x)}$. 
For $0 < \sigma < 1$ and $1 \leq p < \infty$ we define the $f$-dependent low-order \SoboSlobo semi-norms as follows:
\begin{equation}
	\seminorm{u}_{\Sobo[\sigma\!,p](f)}\!
	\ceq 
	\norm{\DeltaOp[\sigma][f] u}_{\Lebesgue[p](\mu_f)}
	\ceq 
	\pars*{
		\int_\Domain \! \int_\Domain
			\abs[\big]{ \DeltaOp[\sigma][f] u(x,y)}^p  
		\dd \mu_f(x,y)
	}^{1/p}
	,
	\label{eq:SoboSloboNormLow}
\end{equation}
where
\begin{equation}
	\DeltaOp[\sigma][f] u(x,y) \ceq \frac{ u(y) \!-\! u(x) }{ \abs{f(y)\!-\! f(x)}^{\sigma} }
	\qand 
	\dd \mu_f(x,y) \ceq \frac{\dvol_f(x) \dvol_f(y)}{ \abs{f(y)\!-\!f(x)}^\DomDim}.
	\label{eq:DeltaOpAndmuf}
\end{equation}
For $p = \infty$ we get the Hölder seminorms $\seminorm{u}_{\Holder[0,\sigma][](f)} \ceq \seminorm{u}_{\Sobo[\sigma,\infty](f)} \ceq \norm{\DeltaOp[\sigma][f] u}_{\Lebesgue[\infty][][\Domain \times \Domain]}$.

In order to create a substitute for $\deltaOp[s-1] D u$, we employ the Moore-Penrose pseudoinverse 
$\dd f(x)\pinv$ of the differential.
If we fix any Riemannian metric on $\Domain$, then we can write $\dd f(x)\pinv = (\dd f(x)\adj \dd f(x))^{-1} \dd f(x)\adj$, where $\dd f(x)\adj$ denotes the adjoint of $\dd f(x)$.
(Notice  that  $\dd f(x)\pinv$ does \emph{not} depend on the actual choice of Riemannian metric.)
The pseudoinverse $\dd f(x)\pinv$ is a linear map from $\AmbSpace$ to $T_x \Domain$, we can use it to pull back the differential $\dd u(x) \colon T_x \Domain \to \TargetSpace$, giving rise to the operator $\DOp[f]$:
\begin{equation}
	\label{eq:OperatorD}
	\DOp[f] u(x) \ceq  \dd u(x)  \dd f(x)\pinv \in \Hom(\AmbSpace;\TargetSpace).
\end{equation}
This operator has the nice property that it is covariant in the following sense:
For every sufficiently smooth diffeomorphism $\varphi \colon \Domain \to \Domain$ we have
\begin{equation}
	\DOp[\,\pars{f \circ \varphi}] \pars{u \circ \varphi}(x)
	=
	\DOp[f] u( \varphi(x) )
	.
	\label{eq:DfIsCovariant}
\end{equation}
The map $\DOp[f] u$ is vector space-valued, so $\DeltaOp[s-1][f] \DOp[f] u$ makes sense, and we can define
\begin{equation*}
	\norm{u}_{\Sobo[s,p](f)}
	\ceq
	\norm{u}_{\Lebesgue[p](f)}
	+
	\norm{ \DOp[f] u }_{\Lebesgue[p](f)} + \seminorm{ \DOp[f] u }_{\Sobo[s-1,p](f)}
	.
\end{equation*}
With \cref{eq:DfIsCovariant} and with the transformation formula for integrals one can show that the norms $\norm{u}_{\Sobo[\sigma,p](f)}$ and $\norm{u}_{\Sobo[s,p](f)}$ are invariant under reparameterization.

Now we pick a smooth embedding $f_0 \colon \Domain \to \AmbSpace$ and define the \SoboSlobo spaces
\begin{equation*}
	\Sobo[\sigma,p][][\Domain][\TargetSpace]
	\ceq
	\myset[\big]{ 
		u \in \Lebesgue[p][][\Domain][\TargetSpace]
	}{
		\norm{u}_{\Sobo[\sigma,p](f_0)} < \infty
	}
	\quad 
	\text{for $0 < \sigma < 2$}
	.
\end{equation*}
Analogously to the Euclidean case, we may define $\Sobo[\sigma,p][0][\Domain][\TargetSpace]$ and $\Sobo[-\sigma,p][][\Domain][\TargetSpace]$ as its dual space with the dual norm $\norm{\cdot}_{\Sobo[-\sigma,p](f_0)}$.

Sobolev and Morrey inequalities known for bounded, open subsets of Euclidean space with smooth boundary 
(see, e.g., \cite[Theorem 92]{SobMan}) 
carry over to $\Sobo[s,p][][\Domain][\AmbSpace]$ by standard techniques, of course with proportionality constants $C(f_0)$ depending on $f_0$:
\begin{align}
	\norm{u}_{\Sobo[t,q](f_0)}
	&\leq 
	C_{\mathrm{S}}(f_0) \norm{u}_{\Sobo[s,p](f_0)}
	&
	&
	\text{for $t \geq 0$, $1 \leq q < \infty$ if $t < s$, $t - \DomDim/q \leq s - \DomDim/p$,}
	\label{eq:SobolevInequality}
	\\
	\norm{u}_{\Holder[k,\alpha](f_0)}
	&\leq 
	C_{\mathrm{M}}(f_0) \norm{u}_{\Sobo[s,p](f_0)}
	&
	&
	\text{for $k \in \N_0$, $0 < \alpha <1$ if $k + \alpha \leq s - \DomDim/p$.}
	\label{eq:MorreyInequality}
\end{align}
For $\alpha \ceq s - \DomDim / p - 1 > 0$ we have a Morrey embedding $\Sobo[s,p][][\Domain][\AmbSpace] \hookrightarrow \Holder[1,\alpha][][\Domain][\AmbSpace]$. 
Thus,
\begin{equation*}
	\Emb[s,p][][\Domain][\AmbSpace]
	\ceq 
	\myset[\big]{ f \in \Sobo[s,p][][\Domain][\AmbSpace] }{ \text{ $f$ is a $\Holder[1]$-embedding } }
\end{equation*}
is a well-defined and open subset of $\Sobo[s,p][][\Domain][\AmbSpace]$.
Moreover, the $f$-dependent norms still make sense and are all equivalent as long as $f$ has some minimal smoothness:
\begin{lemma}\label{lem:equivalence}
	Let $1 < s < 2$ and $1 < p < \infty$ satisfy $\alpha \ceq s - \DomDim / p - 1 > 0$.
	Let $f_0 \colon \Domain \to \AmbSpace$ be a smooth embedding and let $f \in \Emb[s,p][][\Domain][\AmbSpace]$.
	Then the norms $\norm{\cdot}_{\Sobo[s,p](f_0)}$ and $\norm{\cdot}_{\Sobo[s,p](f)}$ are equivalent.
	In particular, the topological vector space $\Sobo[s,p][][\Domain][\AmbSpace]$ does not depend on the actual choice of $f_0$.
\end{lemma}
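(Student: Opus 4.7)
The plan is to reduce everything to a pointwise (or nearly pointwise) comparison of the ingredients defining the two norms, using the Morrey embedding to get uniform $C^{1,\alpha}$ control of $f$.

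First I would exploit \cref{eq:MorreyInequality} to conclude that $f\in\Holder[1,\alpha][][\Domain][\AmbSpace]$ with $\alpha>0$. Because $M$ is compact and $f$ is an embedding, $\dd f(x)$ has full rank uniformly, so $\dd f(x)\pinv$ is uniformly bounded. From this I would deduce two key bi-Lipschitz-type comparisons:
\begin{equation*}
    c_1 \, \abs{f_0(y)-f_0(x)}\leq \abs{f(y)-f(x)}\leq c_2 \, \abs{f_0(y)-f_0(x)}
    \qand
    c_3 \, \omega_{f_0}\leq \omega_f\leq c_4 \, \omega_{f_0},
\end{equation*}
with constants depending only on $f$ and $f_0$. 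Together these give $\dd\mu_f\asymp\dd\mu_{f_0}$ and $\norm{u}_{\Lebesgue[p](f)}\asymp\norm{u}_{\Lebesgue[p](f_0)}$; moreover $\abs{\DeltaOp[\sigma][f]u}\asymp\abs{\DeltaOp[\sigma][f_0]u}$ pointwise, so $\seminorm{u}_{\Sobo[\sigma,p](f)}\asymp\seminorm{u}_{\Sobo[\sigma,p](f_0)}$ for every $\sigma\in(0,1)$.

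Next I would handle $\DOp[f]u$. From the identity $\DOp[f_0]u\cdot\dd f_0=\dd u=\DOp[f]u\cdot\dd f$ one obtains the multiplicative relation
\begin{equation*}
    \DOp[f]u(x)=\DOp[f_0]u(x)\cdot A_f(x),
    \qquad
    A_f(x)\ceq \dd f_0(x)\cdot\dd f(x)\pinv,
\end{equation*}
and the symmetric identity with $f$ and $f_0$ swapped using $A_{f_0}=\dd f\cdot\dd f_0\pinv$. Both $A_f$ and $A_{f_0}$ are in $\Lebesgue[\infty]$ by step one, so $\norm{\DOp[f]u}_{\Lebesgue[p](f)}\asymp \norm{\DOp[f_0]u}_{\Lebesgue[p](f_0)}$. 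For the fractional seminorm I apply the product rule for finite differences:
\begin{equation*}
    \DeltaOp[s-1][f]\DOp[f]u(x,y)
    =\DeltaOp[s-1][f]\DOp[f_0]u(x,y)\cdot A_f(y) + \DOp[f_0]u(x)\cdot \DeltaOp[s-1][f]A_f(x,y).
\end{equation*}
Taking $\Lebesgue[p](\mu_f)$ norms, using $A_f\in\Lebesgue[\infty]$, the measure comparison, and the Morrey bound $\norm{\DOp[f_0]u}_{\Lebesgue[\infty]}\lesssim\norm{u}_{\Sobo[s,p](f_0)}$ from \cref{eq:MorreyInequality} (applied to $\DOp[f_0]u\in\Sobo[s-1,p]$), I obtain
\begin{equation*}
    \seminorm{\DOp[f]u}_{\Sobo[s-1,p](f)}
    \lesssim
    \norm{A_f}_{\Lebesgue[\infty]}\,\seminorm{\DOp[f_0]u}_{\Sobo[s-1,p](f_0)}
    +\seminorm{A_f}_{\Sobo[s-1,p](f)}\,\norm{u}_{\Sobo[s,p](f_0)}.
\end{equation*}
The reverse inequality is obtained identically with $f$ and $f_0$ interchanged.

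The main obstacle is verifying that $\seminorm{A_f}_{\Sobo[s-1,p](f)}$ is finite (and, in the reverse direction, that $\seminorm{A_{f_0}}_{\Sobo[s-1,p](f_0)}$ is finite). This requires two facts in the supercritical range $(s-1)p>n$: first, that $\Sobo[s-1,p]$ is a Banach algebra stable under pointwise multiplication by smooth multipliers (to handle $\dd f_0$), and second, that the map $\dd f\mapsto \dd f\pinv=(\dd f\adj\dd f)^{-1}\dd f\adj$, which is smooth on the open set of injective $m\times n$ matrices, preserves $\Sobo[s-1,p]$ (a standard superposition result). Given these two ingredients, $A_f\in \Sobo[s-1,p]\cap \Lebesgue[\infty]$ follows from $\dd f\in\Sobo[s-1,p]$, and the quantitative norm depends only on $f$, $f_0$, which is exactly what the equivalence requires.
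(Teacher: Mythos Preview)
Your approach is essentially the same as the paper's: both exploit the $\Holder[1]$-regularity of $f$ (via Morrey) to get the bi-Lipschitz comparison of chord lengths and volume forms, both use the factorization $\DOp[f]u=\DOp[f_0]u\cdot A_f$ with $A_f=\dd f_0\,\dd f\pinv$ (the paper writes this as $(\DOp[f_0]f)\pinv$, which coincides since both differentials are injective), and both apply the product rule for $\DeltaOp[s-1]$. You are in fact slightly more explicit than the paper about the one nontrivial point---that $\seminorm{A_f}_{\Sobo[s-1,p]}<\infty$ requires a superposition result for the smooth map $A\mapsto A\pinv$ acting on $\Sobo[s-1,p]$ in the supercritical regime---whereas the paper only alludes to this via the smoothness of the pseudoinverse and leaves the rest to the reader.
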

\begin{proof}
	For the sake of brevity, we sketch this only partially.
	In particular, we skip the details on why $\Lebesgue[p]$-norms induced by $f$ and $f_0$ are equivalent.
	Now we observe that the functions
	\begin{equation}
		\varLambda_f(x,y) \ceq \fdfrac{ \abs{f_0(y) - f_0(x)} }{ \abs{f(y) - f(x)} }
		\qand
		\varLambda_f(x,y)^{-1} = \fdfrac{ \abs{f(y) - f(x)} }{ \abs{f_0(y) - f_0(x)} }
		\label{eq:Lambda}
	\end{equation}
	are continuous in $(x,y)$ because both $f$ and $f_0$ are of class $\Holder[1]$.
	This and the identity $\DeltaOp[s-1][f] u = \varLambda_f^{(s-1)} \DeltaOp[s-1][f_0] u$ already suffice to show equivalence of the seminorms $\norm{\cdot}_{\Sobo[s-1,p](f_0)}$ and $\norm{\cdot}_{\Sobo[s-1,p](f)}$.
	For the higher order seminorms we also employ the identity
	\begin{align}
	\begin{split}
		\DOp[f] u(x) 
		&= \dd u(x) \dd f(x)\pinv
		= \dd u(x) \dd f_0(x)\pinv \dd f_0(x) \dd f(x)\pinv
		\\
		&= \dd u(x) \dd f_0(x)\pinv \pars[\big] {\dd f(x) \dd f_0(x)\pinv}\pinv
		= \pars[\big]{ \DOp[f_0] u(x) } \pars[\big]{ \DOp[f_0] f(x) }\pinv
		.
		\label{eq:FortunateIdentity}
	\end{split}
	\end{align}
	(Note that the identity $(A \, B)\pinv = B\pinv \, A\pinv$ does not hold true in general. Here we really exploit that both $\dd f_0(x)$ and $\dd f(x)$ are injective.)
	Then the norm equivalence emerges from the following three facts: (i) the
	map $A \mapsto A\pinv$ is a smooth diffeomorphism on the smooth manifold of matrices of rank $\DomDim$; see \cite{zbMATH03408807}; (ii) we have the ``product rule'' 
	\begin{equation*}
		\DeltaOp[s-1][f_0] \pars{u \, v}(x,y)
		=
		\DeltaOp[s-1][f_0] u(x,y) \, v(x) + u(y) \DeltaOp[s-1][f_0] v(x,y)
		;
	\end{equation*}
	and (iii) each of the maps $\DOp[f_0] u$, $\DOp[f] u$, $\pars{\DOp[f_0] f}\pinv$, and $\pars{\DOp[f] f_0}\pinv$ is continuous and thus bounded.
\end{proof}

\subsection{Energy space}\label{sec:EnergySpaces}

Next we explain why the \SoboSlobo spaces are the ``right'' spaces for handling the tangent-point energy. The reason for this is the following theorem by Blatt \cite{zbMATH06214305}:

\begin{theorem}\label{thm:EnergySpace}
	Let $\varSigma \subset \AmbSpace$ be a closed, $\DomDim$-dimensional submanifold of class $\Holder[1]$, 
	let $2 \,\DomDim < p < \infty$, and put $s \ceq 2 - \DomDim/p$.
	Then the following statements are equivalent:
	\begin{enumerate}[noitemsep]
		\item $\Energy_p(\varSigma) < \infty$.
		\item $\varSigma$ is of class $\Sobo[s,p]$, i.e., $\varSigma$ is locally the graph of a function of class $\Sobo[s,p][]$.
		\item There is a closed, $\DomDim$-dimensional, and smooth manifold $\Domain$ and an embedding $f \in \Emb[s,p][][\Domain][\AmbSpace]$ such that $\varSigma = f(\Domain)$.
	\end{enumerate}
\end{theorem}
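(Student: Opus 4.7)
The strategy is to prove the cyclic chain $(3) \Rightarrow (2) \Rightarrow (1) \Rightarrow (3)$, with the equivalence $(1) \Leftrightarrow (2)$ carrying essentially all the substance and $(2) \Leftrightarrow (3)$ reducing to standard chart manipulations. Since $p > 2 \DomDim$ one has $s - \DomDim/p = 1 + \alpha$ with $\alpha \ceq 1 - 2\DomDim/p > 0$, so the Morrey embedding $\Sobo[s,p][][\Domain][\AmbSpace] \embeds \Holder[1,\alpha][][\Domain][\AmbSpace]$ holds. Under any of the three hypotheses, $\varSigma$ is a $\Holder[1,\alpha]$-manifold that can be covered by uniform graph patches $h \colon U \to T_\xi \varSigma^\perp$ with $h(0) = 0$, $Dh(0) = 0$, and $\seminorm{Dh}_{\Holder[0,\alpha]} \leq L$: under~(1) this is exactly \cref{thm:GeometricMorrey}, while under~(2) or~(3) it follows from Morrey's embedding together with the inverse function theorem.

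The heart of the argument is the pointwise identity
\begin{equation*}
	\frac{1}{r_{\TP}(\xi', \eta')}
	=
	\frac{2 \, \abs{P^\perp_{\xi'}(\eta' - \xi')} }{ \abs{\eta' - \xi'}^2 }
\end{equation*}
valid for any two points $\xi'$, $\eta'$ on a $\Holder[1]$-submanifold, where $P^\perp_{\xi'}$ is the orthogonal projection onto $T_{\xi'} \varSigma^\perp$. Writing $\xi' = (x, h(x))$ and $\eta' = (y, h(y))$ on a graph patch, a short linear-algebra computation exploiting the uniform smallness of $Dh$ produces the two-sided estimate
\begin{equation*}
	\frac{1}{r_{\TP}(\xi', \eta')}
	\simeq
	\frac{ \abs{ h(y) - h(x) - Dh(x) \, (y - x) } }{ \abs{y - x}^2 },
\end{equation*}
with constants depending only on $L$. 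Since the Riemannian area element $\dd \HsdM^\DomDim_{\varSigma} = \sqrt{\det(I + Dh^\transp Dh)} \dd x$ is bounded above and below, and the off-diagonal contribution to $\Energy_p$ is manifestly finite ($\abs{\xi' - \eta'}$ being uniformly bounded away from zero there), one concludes that $\Energy_p(\varSigma) < \infty$ if and only if
\begin{equation*}
	\sum_i \int_{U_i} \!\! \int_{U_i}
		\frac{ \abs{h_i(y) - h_i(x) - Dh_i(x) \, (y-x)}^p }{ \abs{y - x}^{2p} }
	\dd x \dd y < \infty.
\end{equation*}

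Because $\DomDim + s p = 2 p$ when $s = 2 - \DomDim/p$, the latter integrals are precisely the classical second-order-difference characterization of the seminorms $\seminorm{Dh_i}_{\Sobo[s-1,p]}^p$, a standard fact from the theory of fractional Sobolev spaces \cite{zbMATH06214305,zbMATH07647941}. This identifies finiteness of the tangent-point energy with the local $\Sobo[s,p]$-regularity of the graph functions, which is statement~(2). From there, $(2) \Rightarrow (3)$ is obtained by choosing any smooth model manifold $\Domain$ diffeomorphic to $\varSigma$ and gluing the local graph parameterizations via a smooth partition of unity subordinate to the cover, whereas $(3) \Rightarrow (2)$ follows by straightening a $\Sobo[s,p]$-embedding into graph form through the inverse function theorem, together with the reparameterization invariance of $\Sobo[s,p]$-norms under $\Holder[1,\alpha]$-diffeomorphisms. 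The main obstacle is the pointwise equivalence of $1/r_{\TP}$ with the second-order difference quotient in the regime where $Dh(x)$ is merely $\Holder[0,\alpha]$-small but does not vanish at the base point: controlling the normal projection $P^\perp_{\xi'}$ uniformly requires the $\Holder[1,\alpha]$-bounds of \cref{thm:GeometricMorrey}, which is where most of the analytic weight of the proof sits.
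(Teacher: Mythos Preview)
Your approach is essentially the same as the paper's (which itself largely defers $(1)\Leftrightarrow(2)$ to Blatt~\cite{zbMATH06214305} and handles $(2)\Leftrightarrow(3)$ separately), but two steps are glossed over.  First, the integral
\[
\int\!\!\int \frac{\abs{h(y)-h(x)-Dh(x)(y-x)}^p}{\abs{y-x}^{2p}}\,\dd x\,\dd y
\]
is \emph{not} literally the classical symmetric second-difference characterization $\int\!\!\int \abs{h(x+\xi)-2h(x)+h(x-\xi)}^p/\abs{\xi}^{sp+\DomDim}$; the two are equivalent, but this equivalence is itself a lemma.  The paper makes this explicit: one direction comes from the fundamental theorem of calculus plus Jensen (\cref{lem:EuclideanRsIsBounded}), and the other from the identity $h(x+\xi)-2h(x)+h(x-\xi)=\abs{\xi}^s\bigl(\remainderOp[s]h(x,x+\xi)+\remainderOp[s]h(x,x-\xi)\bigr)$ combined with Leoni's theorem \cite[Theorem~11.27]{zbMATH07647941}.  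Second, for $(2)\Rightarrow(3)$ you write ``choosing any smooth model manifold $\Domain$ diffeomorphic to $\varSigma$'', but the existence of such a smooth $\Domain$ is not automatic: $\varSigma$ is a~priori only a $\Holder[1]$-manifold, and one needs Whitney's theorem that every $\Holder[1]$-manifold admits a compatible $\Holder[\infty]$-atlas (see \cite{zbMATH02530161} or \cite[Chapter~2, Theorem~2.9]{zbMATH03555096}).  The paper invokes this explicitly.
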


Note that the condition 
$p > 2 \, \DomDim$
is equivalent to $\alpha \ceq 1 - 2 \, \DomDim / p > 0$, and that the latter is precisely the requirement for the Morrey embedding $\Sobo[s,p]\hookrightarrow \Holder[1,\alpha]$.
So in particular, this shows once again that submanifolds with finite energy are also of class $\Holder[1,\alpha]$ (cf. \cref{thm:GeometricMorrey}).

The third statement is actually not part of Blatt's result. Indeed, it requires some extra work as the second statement only shows that $\varSigma$ allows an atlas of class $\Sobo[s,p] \hookrightarrow \Holder[1]$ and that $f \colon \varSigma \to \AmbSpace$, $f(\xi) = \xi$ is an embedding of class $\Sobo[s,p]$. 
However, it is a well-known result by Whitney that every $\Holder[1]$ manifold has also a compatible atlas of class $\Holder[\infty]$. This result is somewhat implicitly contained in \cite{zbMATH02530161} and is detailed out further, e.g., in the textbook \cite[Chapter 2, Section 2, Theorem 2.9]{zbMATH03555096}.

\subsection{Tangent-point energy for parameterized surfaces}
\label{sec:ParameterizedSurfaces}

The third statement of \cref{thm:EnergySpace} allows us to shift our attention from submanifolds to \emph{embeddings} of manifolds. This way we may employ mapping spaces with their natural topology and differential structure.
We can represent every submanifold $\varSigma \subset \AmbSpace$ with finite tangent-point energy by an embedding $f \colon \Domain \to \AmbSpace$ of class $\Sobo[s,p] \subset \Holder[1]$ satisfying $\varSigma = f(\Domain)$. 
Note that this embedding is far from unique: Every smooth diffeomorphism $\varphi \colon \Domain \to \Domain$ allows us to use $f \circ \varphi$ instead of $f$ as representative of $\varSigma$.

Next we derive an expression for $\Energy_p(\varSigma)$ in terms of a parameterization $f$. 
Denoting the orthogonal projector onto the normal space $(T_\xi \varSigma)^\perp$ by $\Npr(\xi)$, we can rewrite the energy as
\begin{equation}\label{eq:TP}
	\Energy_p(\varSigma)
	=
	\int_\varSigma \int_\varSigma
	\frac{ 
		\abs{ \Npr(\xi) \, (\eta-\xi) }^{p}
	}{
		\abs{ \eta-\xi }^{2p}
	}
	\dd \HsdM^\DomDim(\xi) \dd \HsdM^\DomDim(\eta).
\end{equation}
This expression is often easier to handle; and it also explains the somewhat odd normalization factor of $2^p$ in \cref{eq:TPByRadius}. 
This integral is also the starting point for reformulating $\Energy_p(\varSigma)$ in terms of~$f$.
In \cref{eq:OperatorD} we defined the operator
$
	\DOp[f] u(x) \ceq  \dd u(x)  \dd f(x)\pinv \in \Hom(\AmbSpace,\AmbSpace)
$
for every weakly differentiable map $u \colon \Domain \to \AmbSpace$.
Applied to $u = f$ we can easily check that 
$
	\DOp[f] f(x) \dd f(x)
	=
	\dd f(x) \dd f(x)\pinv \dd f(x) 
	=
	\dd f(x)
$,
$
	\DOp[f] f(x) \DOp[f] f(x)  = \DOp[f] f(x)
$ and $
	\DOp[f] f(x)^\transp = \DOp[f] f(x)
$. 
Thus, 
\begin{align}
	\label{eq:Tpr}
	\Tpr_f(x) 
	\ceq  
	\DOp[f] f(x) 
	\qand
	\Npr_f (x) \ceq  \id_{\AmbSpace} - \Tpr_f (x)
\end{align}
are the orthogonal projectors onto the embedded tangent space $T_{f(x)} \varSigma = \dd f( T_x \Domain)$
and onto the normal space $(T_{f(x)} \varSigma)^\perp = (\dd f( T_x \Domain))^\perp$, respectively.
Together with the Riemannian volume measure $\vol_f$ induced by $f$, this allows us to rewrite the energy as
\begin{align}\label{eq:gTPpar}
	\Energy_{p}(f)
	\ceq 
	\Energy_p(f(\Domain))
	=
	\int_\Domain \! \int_\Domain
		\frac{ 
			\abs{ \Npr_f(x) \, (f(y)-f(x)) }^{p}
		}{
			\abs{ f(y) - f(x) }^{2p}
		}
	\dvol_f(x) \dvol_f(y)
	.
\end{align}
It will turn out useful to introduce the following operator:
\begin{align}
	\RemainderOp[s][f] u (x,y)
	\ceq
	\frac{u(y)-u(x) - \DOp[f] u(x) \, (f(y)- f(x)) }{\abs{f(y) - f(x)}^s}
	\quad 
	\text{for $1 < s < 2$.}
	\label{def:Op}
\end{align}
One should read the term 
$
	u(x) + \DOp[f] u(x) \, (f(y)- f(x))
$
as the first-order Taylor approximation of $u(y)$ at center $x$. Thus, $\RemainderOp[s][f] u (x,y)$ measures the \emph{remainder} of this Taylor approximation against a fractional power of the chord distance $\abs{f(y) - f(x)}$.
This operator is tightly connected to the energy because we may write the following for $s \ceq 2 - \DomDim/p$:
\begin{equation*}
	\frac{ 
			\abs{ \Npr_f(x) \, (f(y)-f(x)) }^{p}
		}{
			\abs{ f(y) - f(x) }^{2p}
		}
	=
	\frac{ 
		\abs{ f(y)-f(x) - \DOp[f] f(x) \, (f(y)-f(x)) }^{p}
	}{
		\abs{ f(y) - f(x) }^{2p}
	}
	=
	\frac{\abs{ \RemainderOp[s][f] f(x,y) }^p }{\abs{ f(y) - f(x) }^\DomDim}
	.
\end{equation*}
Now, with the measure $\mu_f$ from \cref{eq:DeltaOpAndmuf}, we can write the energy in compactly form as follows:
\begin{equation}
	\Energy_p^s(f)
	\ceq
	\int_\Domain \! \int_\Domain
		\abs{ \RemainderOp[s][f] f(x,y) }^p
	\dd \Measure[f](x,y)
	\label{eq:gTPparOp}
	.
\end{equation}
In fact, this energy makes sense also for $s \neq 2 - \DomDim/p$. It corresponds to the so-called \emph{generalized tangent-point energy} introduced by Blatt and Reiter in \cite{MR3331696}. 
Provided that $s < 2$ and $\alpha \ceq s - \DomDim/p > 1$ (which enables the Morrey embedding $\Sobo[s,p] \hookrightarrow \Holder[1,\alpha]$), it has all the properties outlined in \cref{sec:TPE}.
For $s > 3/2$ and $\DomDim = 1$ it allows us to set $p = 2$ without losing these properties. 
This turns the energy space into the Hilbert space $\Sobo[s,2] = \Bessel[s]$. 
In fact, it was the motivation for introducing the generalized tangent-point energy.
Alas, because of the restriction $s< 2$, this does not help us with dealing with surfaces ($\DomDim=2$). So we have to settle with the Banach space setting.

\subsection{Differential of the tangent-point energy}
\label{sec:DEnergy}

In this section we analyze smoothness properties of the tangent-point energy and its derivative. 
We will see that the tangent-point energy is sufficiently smooth to apply techniques from ``smooth optimization'', which typically refers to methods that require that the objective function is of class $\Holder[1,1][\loc]$ or $\Holder[2,1][\loc]$.
This fact is somewhat folklore (see for example \cite{2501.16647} for the case $\DomDim = 1$), but we have not found any detailed account for it in the literature. 
So we include a derivation here.
A central role in all this is played by the operator $\RemainderOp[s]$, and thus we have to study it first.
Given two sufficiently smooth embeddings $f$, $f_0 \colon \Domain \to \AmbSpace$, we can express $\RemainderOp[s][f]$ in terms of $\RemainderOp[s][f_0]$ as follows:
\begin{equation}
	\RemainderOp[s][f] u(x,y)
	=
	\varLambda_f^s (x,y)
	\pars[\Big]{
		\RemainderOp[s][f_0] u(x,y)
		-
		\DOp_{f} u (x) \RemainderOp[s][f_0]f(x,y)
	},
	\label{eq:MagicFormula}
\end{equation}
with $\varLambda_f$ as defined in \cref{eq:Lambda}. 
Indeed, this can be verified by 
expanding the definition of $\RemainderOp[s]$
and by recalling from \cref{eq:FortunateIdentity} that $\DOp[f] u(x) = \DOp[f_0] u(x) \DOp[f] f_0(x)$:
\begin{align*}
	\MoveEqLeft[0]
	\abs{f_0(y)-f_0(x)}^s
	\pars[\Big]{
		\RemainderOp[s][f_0] u (x,y)
		-
		\DOp_{f} u(x) \RemainderOp[s][f_0] f (x,y)
	}
	\\
	&=
	\pars[\Big]{
		u(y) - u(x) - \DOp[f_0] u(x) \pars{f_0(y) - f_0(x)}
	}
	\\
	&\qquad
	-
	\DOp[f] u(x) \pars[\Big]{f(y) - f(x) - \DOp[f_0] f(x) \pars{f_0(y) - f_0(x)} }
	\\
	&=
	u(y) - u(x) - \DOp[f] u(x) \pars{f(y) - f(x)}
	=
	\abs{f(y)-f(x)}^s \RemainderOp[s][f] u (x,y)
	.
\end{align*}
Another central property of $\RemainderOp[s]$ is that it is covariant in the following sense: Let $f \colon \Domain \to \AmbSpace$ be a $\Holder[1]$-embedding, let $u \in \Holder[1][][\Domain][\AmbSpace]$, and let $\varphi \colon \Domain \to \Domain$ be a $\Holder[1]$-diffeomorphism. 
Then by \cref{eq:DfIsCovariant} we have the following for all $x$, $y \in \Domain$, $x \neq y$:
\begin{equation*}
	\RemainderOp[s][f \circ \varphi] (u \!\circ\! \varphi)(x,y)
	=
	u(\varphi(y)) 
	\!-\!
	u(\varphi(x))
	\!-\!
	\DOp[f] u(\varphi(x)) \pars{f(\varphi(y)) \!-\! f(\varphi(x))}
	=
	\RemainderOp[s][f] u( \varphi(x), \varphi(y) )
	.
\end{equation*}

\begin{lemma}\label{lem:RfsIsBounded}
	Let $1 < s < 2$ and $1 < p \leq \infty$ such that $s - \DomDim/p > 1$.
	Then for every $f \in \Embsp$
	the linear operator $\RemainderOp[s][f] \colon \Sobo[s,p][][\Domain][\AmbSpace] \to \Lebesgue[p][\Measure[f]][\Domain\times \Domain][\AmbSpace]$ is bounded, where $\Measure[f]$ is defined as in \cref{eq:DeltaOpAndmuf}.
\end{lemma}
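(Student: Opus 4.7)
The plan is to reduce the geometric statement to a purely Euclidean fractional Taylor remainder estimate through the ``magic formula'' \cref{eq:MagicFormula}. Pick any smooth embedding $f_0\colon\Domain\to\AmbSpace$. Since $f\in\Embsp$ with $s-\DomDim/p>1$, the Morrey embedding \cref{eq:MorreyInequality} gives $f\in\Holder[1,\alpha]$ for $\alpha\ceq s-\DomDim/p-1$; hence the function $\varLambda_f$ defined in \cref{eq:Lambda} extends continuously (and strictly positively) to the diagonal of $\Domain\times\Domain$, so $\varLambda_f$ and $1/\varLambda_f$ are bounded on the compact set $\Domain\times\Domain$. In particular the measures $\Measure[f]$ and $\Measure[f_0]$ are equivalent. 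Combining this with \cref{eq:MagicFormula} and the bound $\norm{\DOp[f]u}_{\Lebesgue[\infty]}\le C\norm{u}_{\Sobo[s,p](f_0)}$ (also a consequence of Morrey), one obtains
\begin{equation*}
   \norm{\RemainderOp[s][f] u}_{\Lebesgue[p](\Measure[f])}
   \le C(f_0,f)\,
   \bigl(\,\norm{\RemainderOp[s][f_0] u}_{\Lebesgue[p](\Measure[f_0])}
   +\norm{u}_{\Sobo[s,p](f_0)}\,\norm{\RemainderOp[s][f_0] f}_{\Lebesgue[p](\Measure[f_0])}\bigr).
\end{equation*}
Because $f\in\Sobo[s,p]$ itself, the lemma therefore reduces to showing that for smooth $f_0$ the linear operator $\RemainderOp[s][f_0]\colon\Sobo[s,p][][\Domain][\AmbSpace]\to\Lebesgue[p][\Measure[f_0]][\Domain\times\Domain][\AmbSpace]$ is bounded.

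Next I would localize via a finite smooth atlas of $\Domain$ together with a subordinate smooth partition of unity. On such a chart $U\subset\DomSpace$ the Riemannian volume $\vol_{f_0}$ is comparable to Lebesgue measure, the chord distance $\abs{f_0(y)-f_0(x)}$ is comparable to $\abs{y-x}$, and smoothness of $f_0$ gives the Taylor expansion
\begin{equation*}
   \dd f_0(x)\pinv\bigl(f_0(y)-f_0(x)\bigr)=(y-x)+O(\abs{y-x}^2),
\end{equation*}
where the $O$-term is controlled in $\Lebesgue[\infty]$ by $\norm{f_0}_{\Holder[2]}$. Consequently, for $v\in\Sobo[s,p]$ supported in the chart,
\begin{equation*}
   \RemainderOp[s][f_0] v(x,y)
   =\frac{v(y)-v(x)-\dd v(x)(y-x)}{\abs{f_0(y)-f_0(x)}^s}
   +\frac{\dd v(x)\cdot O(\abs{y-x}^2)}{\abs{f_0(y)-f_0(x)}^s},
\end{equation*}
and the second summand is bounded pointwise by $C\abs{\dd v(x)}\,\abs{y-x}^{2-s}$, whose $\Lebesgue[p](\Measure[f_0])$-norm is finite (thanks to $s<2$, the local integral $\int\abs{y-x}^{(2-s)p-\DomDim}\dd y$ converges) and is controlled by $\norm{v}_{\Sobo[1,p]}$.

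The heart of the argument is the first summand, i.e.\ the flat Euclidean Taylor remainder estimate
\begin{equation*}
   \int_U\!\int_U
   \frac{\abs{v(y)-v(x)-\dd v(x)(y-x)}^p}{\abs{y-x}^{sp+\DomDim}}\dd x\dd y
   \le C\,\seminorm{\dd v}_{\Sobo[s-1,p](U)}^p.
\end{equation*}
I would prove this (for $1<p<\infty$) by writing $v(y)-v(x)-\dd v(x)(y-x)=\int_0^1\bigl(\dd v(x+t(y-x))-\dd v(x)\bigr)(y-x)\dd t$, applying Jensen's inequality in $t$, and then substituting $z=x+t(y-x)$ so that $\abs{y-x}=\abs{z-x}/t$ and $\dd y=t^{-\DomDim}\dd z$. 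The exponent on $t$ collapses to $t^{(s-1)p}$, whose integral on $(0,1)$ equals $1/((s-1)p+1)$, and the remaining double integral is exactly $\seminorm{\dd v}_{\Sobo[s-1,p]}^p$. The case $p=\infty$ is analogous and even simpler: the Hölder modulus of $\dd v$ yields $\abs{v(y)-v(x)-\dd v(x)(y-x)}\le s^{-1}\seminorm{\dd v}_{\Holder[0,s-1]}\abs{y-x}^s$ directly.

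The main obstacle is bookkeeping rather than conceptual: one has to track carefully that the geometric quotient by $\abs{f_0(y)-f_0(x)}^s$ only differs from the flat quotient by $\abs{y-x}^s$ by a $\Lebesgue[\infty]$-factor, and that the geometric operator $\DOp[f_0]v$ only differs from the Euclidean differential $\dd v$ by a term of Taylor order two in $f_0$ that is absorbed by the tail exponent $2-s>0$. Once the flat estimate is in place, summing over charts and reassembling via the partition of unity yields $\norm{\RemainderOp[s][f_0] v}_{\Lebesgue[p](\Measure[f_0])}\le C\,\norm{v}_{\Sobo[s,p](f_0)}$, which together with the initial reduction completes the proof.
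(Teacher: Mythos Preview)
Your proposal is correct and shares the paper's core ingredient: the Euclidean Taylor remainder estimate
\[
   \int_V\!\int_V\frac{\abs{v(y)-v(x)-Dv(x)(y-x)}^p}{\abs{y-x}^{sp+\DomDim}}\dd x\dd y
   \le C\,\seminorm{Dv}_{\Sobo[s-1,p]}^p,
\]
proved via the fundamental theorem of calculus, Jensen, and the substitution $z=x+t(y-x)$ (this is exactly the paper's \cref{lem:EuclideanRsIsBounded}).

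The organizational route differs slightly. You first invoke the magic formula \cref{eq:MagicFormula} with a \emph{global} smooth reference $f_0$, then localize, then correct $\RemainderOp[s][f_0]$ to the flat Euclidean $\remainderOp[s]$ via the Taylor expansion $\dd f_0(x)\pinv(f_0(y)-f_0(x))=(y-x)+O(\abs{y-x}^2)$. The paper instead localizes \emph{first} into graph patches $f(x)=(x,h(x))$ and chooses the flat reference $f_0(x)=(x,0)$ inside each patch; then the magic formula outputs the Euclidean $\remainderOp[s]$ directly (\cref{eq:RsfInGraphPatch}), bypassing your intermediate correction step. The paper also handles the reassembly by covering the diagonal with patches and bounding the far-from-diagonal set $W_\delta=\{(x,y):\abs{f(y)-f(x)}>\delta\}$ by the crude estimate $\abs{\RemainderOp[s][f]u}\le\delta^{-s}(\abs{u(y)}+\abs{u(x)}+\delta^{1-s}\abs{\DOp[f]u(x)})$, rather than a partition-of-unity argument; your partition-of-unity route works too but the cross-terms (where $x$ and $y$ lie in different charts) need the same kind of off-diagonal bound, which you did not spell out. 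Neither difference is a gap---both approaches go through---but the paper's graph-patch-then-flat-reference ordering is the slicker packaging.
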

\begin{proof}
	Recall also the definition of $\mu$ from \cref{eq:deltaOpAndmu} and of the Sobolev norms from \cref{eq:SoboSloboNormEulcidean} and \cref{eq:SoboSloboNormLow}.
	Note that $\Domain$ is compact, hence the space $\Lebesgue[p][\Measure[f]][\Domain\times \Domain][\AmbSpace]$ does not really depend on the choice of $f$.
	Since $\RemainderOp[s]$ is covariant, we may employ charts. 
	Without loss of generality, we may assume that $f$ is a graph of some function $h \in \Sobo[s,p][][V][\NormalSpace]$ over an open ball $V \subset \DomSpace$ around $0$, where $h \in \Sobo[s,p][][V][\NormalSpace]$.
	More precisely, we may assume that $f(x) = (x,h(x))$ for $x \in V$.
	We can arrange it so that $h(0)=0$ and $Dh(0) = 0$. Since $Dh$ is continuous, we may shrink $V$ if necessary to achieve $\norm{D h}_{\Lebesgue[\infty][][V]} \leq 1/2$.
	Moreover, we denote by $\prx\colon(x,y)\mapsto x$ the projection onto the first coordinate.
	Choosing $f_0 \colon V \to \AmbSpace$, $f_0(x) \ceq (x,0)$, one sees that \cref{eq:MagicFormula} turns into 
	\begin{align}
		\RemainderOp[s][f] u
		=
		\pars[\big]{1 + \abs{\deltaOp[1] h}^2}^{-\frac{s}{2}}
		\pars*{
			\remainderOp[s] u
			-
			\pars[\Big]{\pars[\big]{Du \pars{\id_\DomSpace + Dh^\transp Dh}^{-1} Dh^\transp }\circ \prx}
			\remainderOp[s] h
		},
		\label{eq:RsfInGraphPatch}
	\end{align}
	where
	\begin{equation}
		\remainderOp[s] u(x,y) 
		\ceq 
		\frac{ u(y) - u(x) - Du(x) \pars{y-x} }{\abs{y-x}^{s}} .
		\label{eq:DefremainderOp}
	\end{equation}
	Since $h$ has slope $\leq 1/2$, one has $(4/5)^{s/2} \leq \pars[\big]{1 + \abs{\deltaOp[1] h}^2}^{-s/2} \leq 1$. 
	Moreover, one can find $0 < c \leq C < \infty$ such that
	$c\, \mu \leq \Measure[f] \leq C \, \mu$.
	Hence, (with some different $C > 0$) we get that
	\begin{equation}
		\norm{ \RemainderOp[s][f] u }_{\Lebesgue[p][][V\times V](\Measure[f])}
		\leq 
		C   \pars[\Big]{
			\norm{ \remainderOp[s] u }_{\Lebesgue[p][][V \times V](\mu)}
			+
			\norm{Du}_{\Lebesgue[\infty][][V](\mu)}
			\norm{ \remainderOp[s] h }_{\Lebesgue[p][][V\times V](\mu)}
		}
		.
		\label{eq:RsfInGraphPatch2}
	\end{equation}
	\cref{lem:EuclideanRsIsBounded} below implies
	\[ 
		\norm{ \remainderOp[s] u }_{\Lebesgue[p][][V \times V](\mu)} 
		\leq 
		C \norm{D u}_{\Sobo[s-1,p][][V]}
		\qand
		\norm{ \remainderOp[s] h }_{\Lebesgue[p][][V\times V](\mu)}
		\leq 
		C \norm{D h}_{\Sobo[s-1,p][][V]}
		.
	\]
	Morrey's inequality \cref{eq:MorreyInequality} yields
	$\norm{ D u }_{\Lebesgue[\infty][][V]} \leq C \norm{ Du }_{\Sobo[s-1,p][][V]} = C \norm{ \DOp[f_0]  u }_{\Sobo[s-1,p][][V](f_0)}$, thus
	\begin{equation*}
		\norm{ \RemainderOp[s][f] u }_{\Lebesgue[p][][V\times V](\Measure[f])}
		\leq 
		C \pars[\big]{ 
			1
			+
			\norm{ Dh }_{\Sobo[s-1,p][][V]}
		}
		\norm{ \DOp[f_0]  u }_{\Sobo[s-1,p][][V](f_0)}
		.
	\end{equation*}
	Since the slope of $h$ is bounded and since $\overline V$ is compact, we have
	$\norm{ \DOp[f_0]  u }_{\Sobo[s-1,p][][V](f_0)} \leq C \norm{ \DOp[f] u }_{\Sobo[s-1,p][][V](f)}$. This yields the local bound
	\begin{equation*}
		\norm{ \RemainderOp[s][f] u }_{\Lebesgue[p][][V\times V](\Measure[f])}
		\leq 
		C \pars[\big]{ 
			1
			+
			\norm{ Dh }_{\Sobo[s-1,p][][V]}
		}
		\norm{ \DOp[f] u }_{\Sobo[s-1,p][][V](f)}
		.
	\end{equation*}
	To get a global bound, we cover the diagonal $\myset{ (x,x) }{ x \in \Domain }$ by finitely many graph patches.
	Then there is a $\delta > 0$ so that the complement of all these patches is contained in $W_\delta \ceq \myset{(x,y) }{ \abs{f(y)-f(x)} > \delta}$.
	For $(x,y) \in W_\delta$ we have
	\begin{align*}
		\dd \Measure[f](x,y) \leq \frac{\vol_f(x) \vol_f(y)}{\delta^{\DomDim}}
		\;\;\text{and}\;\;
		\abs{\RemainderOp[s][f] u(x,y)}
		\leq 
		\frac{
			\pars{\abs{u(y)}  + \abs{u(x)}} + \delta^{-(s-1)}\abs{\DOp[f] u(x)}
		}{
			\delta^{s} 
		}
		,
	\end{align*}
	which allows us to bound $\norm{ \remainderOp[s] u }_{\Lebesgue[p][][W_\delta](\mu)}$ in terms of $\norm{u}_{\Lebesgue[p][](f)}$ and $\norm{\DOp[f] u}_{\Lebesgue[p][](f)}$.
\end{proof}

\begin{lemma}\label{lem:EuclideanRsIsBounded}
	Let $V \subset \DomSpace$ be a bounded, convex, open set, let $1 < s < 2$, and let $1 \leq p \leq \infty$.
	Then there is a constant $C = C(s,p)$ such that holds true for the operator $\remainderOp[s]$ from \cref{eq:DefremainderOp}:
	\begin{equation}
		\norm{\remainderOp[s] u}_{\Lebesgue[p][][V \times V](\mu)}
		\leq
		C \norm{D u}_{\Sobo[s-1,p][][V]}
		\quad 
		\text{for all $u \in \Sobo[s,p][][V][\AmbSpace]$.}
		\label{eq:RsfInGraphPatch3}
	\end{equation}
\end{lemma}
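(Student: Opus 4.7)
The plan is to reduce \cref{eq:RsfInGraphPatch3} to the standard Gagliardo seminorm of $Du$ by applying the fundamental theorem of calculus to the Taylor remainder and then changing variables so that the chord that appears inside $Du(\cdot)-Du(\cdot)$ becomes the integration variable. First I would use that $V$ is convex, so the segment from $x$ to $y$ lies in $V$ whenever $x,y\in V$, and write
\begin{equation*}
u(y)-u(x)-Du(x)(y-x) = \int_0^1 \bigl(Du(x+t(y-x)) - Du(x)\bigr)(y-x) \dd t.
\end{equation*}
Dividing by $\abs{y-x}^s$ and using $|x+t(y-x)-x|=t\abs{y-x}$ gives the pointwise bound
\begin{equation*}
\abs{\remainderOp[s] u(x,y)} \leq \int_0^1 t^{s-1} \, \frac{\abs{Du(x+t(y-x)) - Du(x)}}{\abs{(x+t(y-x))-x}^{s-1}} \dd t.
\end{equation*}

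Next, for $1\leq p<\infty$, I would apply Minkowski's integral inequality (with the outer measure $\mu$ from \cref{eq:deltaOpAndmu} on $V\times V$) to pull the $\int_0^1$ outside the $\Lebesgue[p]$-norm. For each fixed $t\in(0,1)$ and each fixed $x\in V$, perform the substitution $y\mapsto z\ceq x+t(y-x)$ in the inner integral. By convexity, $z$ ranges over $x+t(V-x)\subset V$, and the Jacobian computation shows
\begin{equation*}
\frac{\dd y}{\abs{y-x}^{\DomDim}} = \frac{\dd z}{\abs{z-x}^{\DomDim}},
\end{equation*}
so that the $t$-dependence of the measure disappears entirely. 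The remaining double integral is exactly the $p$-th power of the Gagliardo seminorm $\seminorm{Du}_{\Sobo[s-1,p][][V]}$, and the $t$-factor contributes the harmless prefactor $\int_0^1 t^{s-1}\dd t = 1/s$. This yields $\norm{\remainderOp[s] u}_{\Lebesgue[p][][V\times V](\mu)}\leq \tfrac{1}{s}\seminorm{Du}_{\Sobo[s-1,p][][V]} \leq C\norm{Du}_{\Sobo[s-1,p][][V]}$.

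For the endpoint $p=\infty$, the same pointwise identity gives directly
\begin{equation*}
\abs{\remainderOp[s] u(x,y)} \leq \int_0^1 t^{s-1} \seminorm{Du}_{\Holder[0,s-1][][V]} \dd t = \tfrac{1}{s}\seminorm{Du}_{\Holder[0,s-1][][V]},
\end{equation*}
and $\Holder[0,s-1][][V]=\Sobo[s-1,\infty][][V]$ closes the case. I do not expect any serious obstacle: the only point requiring care is that the change of variables $y\mapsto z$ is only defined because $V$ is convex (so $z$ stays in $V$) and that the $\abs{y-x}^{-\DomDim}$ weight in $\mu$ is precisely the one that kills the Jacobian; both are built into the hypothesis and the definition of $\mu$.
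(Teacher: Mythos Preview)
Your argument is correct and follows essentially the same route as the paper: the fundamental theorem of calculus representation of the remainder, the substitution $z=x+t(y-x)$ exploiting convexity of $V$, and the separate treatment of $p=\infty$ all match. The only cosmetic difference is that you use Minkowski's integral inequality to pull the $t$-integral outside the $\Lebesgue[p]$-norm, whereas the paper uses Jensen's inequality on $\abs{\cdot}^p$ followed by Fubini; this yields the constant $1/s$ for you versus $((s-1)p+1)^{-1/p}$ in the paper, but the structure of the proof is identical.
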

\begin{proof}
	We employ the technique from \cite[p.~264]{zbMATH06214305}.
	The fundamental theorem of calculus implies
	\[
		\remainderOp[s] u(x,y) = \frac{1}{\abs{y-x}^{s}} \int_0^1 
			\pars[\big]{ Du(x + t \, (y-x)) - Du(x) } \pars{y-x} 
		\dd t.
	\]
	For $p = \infty$ we may use 
	$\abs{ Du(x + t \, (y-x)) - Du(x) } \leq \abs{x + t \, (y-x) - x}^{s-1} \norm{D u}_{\Sobo[s-1,\infty][][V]}
	\leq t^{s-1} \abs{y-x}^{s-1} \norm{D u}_{\Sobo[s-1,\infty][][V]}$, thus
	\begin{align*}
		\abs{\remainderOp[s] u(x,y)}
		\leq 
		\pars*{\textstyle\int_0^1 \! t^{s-1} \dd t} \norm{u}_{\Sobo[s,\infty][][V]} 
		\qand
		\norm{\remainderOp[s] u}_{\Lebesgue[\infty][][V \times V]}
		\leq
		s^{-1} \norm{D u}_{\Sobo[s-1,\infty][][V]}
		.
	\end{align*}
	For $p < \infty$ we need a bit more effort. Jensen's inequality and Fubini's theorem yield
    \begin{align*}
        \norm{ \remainderOp[s] u }_{\Lebesgue[p][][V\times V	](\mu)}^p 
        \leq 
        \int_0^1 \!\!\!\! \int_{V} \! \int_{V} 
            \frac{ \abs{ D u( x + t \, (y-x)) - D u(x)  }^p }{ \abs{y-x}^{(s-1)\,p + \DomDim } }
        \dd y \dd x \dd t 
        .
    \end{align*}
    Next we substitute $z = x + t \, (y - x)$. 
    With $V(t,x) \ceq \myset{ z = x + t \, (y - x) }{ y \in V}$, the above equals
    \begin{align*}
        \int_0^1 \!\!\!\! \int_{V} \! \int_{V(t,x)} 
        \frac{
            \abs{Du(z) - Du(x) }^p 
        }{
            \abs{(z - x)/t}^{(s-1) \, p + \DomDim}
        } 
        \frac{\dd z}{t^n} \dd x  \dd t
		\leq
		\int_0^1 \!\!\!\! \int_{V} \! \int_{V} 
        \frac{
            \abs{Du(z) - Du(x) }^p 
        }{
            \abs{(z - x)/t}^{(s-1) \, p + \DomDim}
        } 
        \frac{\dd z}{t^n} \dd x  \dd t
		.
    \end{align*}
    Here we used that 
	(i) $V$ is convex so that the set $V(t,x)$ is a subset of $V$ and that 
	(ii) the integrand is nonnegative. 
	Finally, we obtain the desired statement for arbitrary $p$:
    \begin{align*}
        \norm{ \remainderOp[s] u }_{\Lebesgue[p][][V \times V](\mu)}^p 
        \leq
        \int_0^1\!\!\!\!\int_{V} \! \int_{V} 
            \frac{\abs{Du(z) - Du(x) }^p }{\abs{(z - x)/t}^{(s-1)p}} 
        \frac{\dd x \dd z}{\abs{z-x}^\DomDim} \dd t
        =
        \pars*{ \textstyle\int_0^1 \! t^{(s-1) p}  \dd t} \norm{ Du }_{\Sobo[s-1,p][][V]}^p
        .
    \end{align*}
\end{proof}

\begin{remark}
	For $u = f$ \cref{lem:RfsIsBounded} shows that every $f \in \Emb[s,p][][\Domain][\AmbSpace]$ has finite energy 
	\begin{equation*}
		\Energy_p(f) = \norm{\RemainderOp[s][f] f}_{\Lebesgue[p][](\Measure[f])}^p < \infty.
	\end{equation*}
	This is the first half of the proof for \cref{thm:EnergySpace}.
	To give at least an idea of the second half, we sketch the following two ingredients:
	First, 	\cref{eq:RsfInGraphPatch} for $u = h$ and with $\norm{Dh}_{\Lebesgue[\infty][][V]} \leq 1/2$ yields the pointwise bound of the form 
	$\abs{\RemainderOp[s][f] f} \geq c \abs{\remainderOp[s] h }$ with some $c>0$.
	Hence, (by shrinking $c$ if necessary) we obtain
	$\norm{\RemainderOp[s][f] f}_{\Lebesgue[p][][V \times V](\Measure[f])} \geq c \norm{\remainderOp[s] h}_{\Lebesgue[p][][V\times V](\mu)}$.
	Second, for $v \in \Sobo[s,p][][\DomSpace][\NormalSpace]$ we have
	\begin{gather*}
		\textstyle
		\pars*{
			\int_\DomSpace \! \int_\DomSpace
			\abs[\big]{ \frac{v(x + \xi) - 2 v(x) + v(x - \xi)}{\abs{\xi	}^s} }^p
			\dd x \, \frac{\dd \xi}{\abs{\xi}^{\DomDim}}
		}^{1/p}
		=
		\pars*{
			\int_\DomSpace \! \int_\DomSpace
			\abs[\big]{
				\remainderOp[s] v(x,x + \xi) 
				+
				\remainderOp[s] v(x,x - \xi)
			}^p
			\dd x \, \frac{\dd \xi}{\abs{\xi}^{\DomDim}}
		}^{1/p}
		\\
		\qquad 
		\leq 
		\textstyle
		\pars*{
			\int_\DomSpace \! \int_\DomSpace
			\abs[\big]{
				\remainderOp[s] v(x,x + \xi) 
			}^p
			\dd x \, \frac{\dd \xi}{\abs{\xi}^{\DomDim}}
		}^{1/p}
		+
		\pars*{
			\int_\DomSpace \! \int_\DomSpace
			\abs[\big]{
				\remainderOp[s] v(x,x - \xi) 
			}^p
			\dd x \, \frac{\dd \xi}{\abs{\xi}^{\DomDim}}
		}^{1/p}
		.
	\end{gather*}
	In \cite[Theorem~11.27]{zbMATH07647941} Leoni shows that the first term is equivalent to $\norm{Du}_{\Sobo[s-1,p][][\DomSpace]}$.
	The substitutions $y = x + \xi$ and $y = x - \xi$ show that each of the last two terms
	equals $\norm{\remainderOp[s] v}_{\Lebesgue[p][][\DomSpace \times \DomSpace](\mu)}$.
	Now we only have to localize this. 
	Assuming that the graph patch $f(x) = (x,h(x))$ is supported in $V = B(0,\varrho) \subset \DomSpace$, $\varrho>0$,
	we choose
	$W \ceq \OpenBall{0}{\varrho/2} \subset V$ and $\chi \in \Holder[\infty][][\DomSpace][\intervalcc{0,1}]$ satisfying $\chi|_W = 1$ and $\supp(\chi) \subset \OpenBall{0}{3/4\, \varrho}$. 
	Then we define $v \in \Sobo[s,p][][\DomSpace][\NormalSpace]$ as the extension of $\chi  h$ by $0$. Now, since $\DomSpace \setminus V$ and $\supp(\chi)$ have minimal distance $\geq \varrho/4 > 0$, we get
	\begin{align*}
		\norm{h}_{\Sobo[s,p][][W]}
		\leq 
		\norm{v}_{\Sobo[s,p][][\DomSpace]}
		\leq 
		C \norm{\remainderOp[s] v}_{\Lebesgue[p][][\DomSpace \times \DomSpace](\mu)}
		\leq 
		C' \pars[\big]{
			\norm{\remainderOp[s] h}_{\Lebesgue[p][][V \times V](\mu)}
			+
			\norm{ h}_{\Sobo[1,p][][V]}
		}
		,
	\end{align*}
	which suffices for showing that $f \in \Sobo[s,p][][\Domain][\AmbSpace]$.
	We leave the remaining details to the reader.
\end{remark}

Next we show that $\RemainderOp[s][f]$ depends smoothly on $f$.

\begin{lemma}\label{lem:RfsIsSmooth}
	Under the conditions of \cref{lem:RfsIsBounded} the following map is smooth:
	\begin{align*}
		\RemainderOp[s] 
		\colon 
		\Emb[s,p][][\Domain][\R^{m}]
		\to 
		\ContOps \pars[\big]{
			\Sobo[s,p][][\Domain][\AmbSpace]
			;
			\Lebesgue[p][\Measure[f_0]][\Domain \times \Domain][\AmbSpace]
		}
		,
		\quad
		f \mapsto \RemainderOp[s][f]
		,
	\end{align*}
	where $\ContOps(X;Y)$ denotes the normed vector space of bounded, linear operators from normed space $X$ to normed space $Y$, equipped with the operator norm.
	The derivative in direction $v \in \Sobo[s,p][][\Domain][\AmbSpace]$ is given by
	\begin{align}
		D \pars{ f \mapsto \RemainderOp[s][f] u} v
		=
		- \pars{\DOp[f] u \!\circ\! \prx} \RemainderOp[s][f] v
		\!-\! \pars{\DOp[f] u \!\circ\! \prx} \pars{\DOp[f] v \!\circ\! \prx}^\transp
		\!\RemainderOp[s][f] f
		\!-\! 
		s \RemainderOp[s][f] u
		\inner[\big]{ 
		    \DeltaOp[1][f] \! f
		    , 
		    \DeltaOp[1][f] v
		}
		.
		\label{eq:DRsf}
	\end{align}
\end{lemma}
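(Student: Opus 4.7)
The plan is to exploit the decomposition \cref{eq:MagicFormula}, which expresses $\RemainderOp[s][f] u$ as
\[
  \RemainderOp[s][f] u = \varLambda_f^s\,\pars[\big]{\RemainderOp[s][f_0] u - \pars{\DOp[f] u \circ \prx}\,\RemainderOp[s][f_0] f},
\]
so that the only $f$-dependent factors are the continuous scalar $\varLambda_f^s$ and the operator $\DOp[f]\colon u\mapsto \dd u\,\dd f\pinv$, while $\RemainderOp[s][f_0]$ is \emph{fixed} and bounded from $\Sobo[s,p][][\Domain][\AmbSpace]$ to $\Lebesgue[p][\Measure[f_0]][\Domain\times\Domain][\AmbSpace]$ by \cref{lem:RfsIsBounded}. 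Since pseudoinversion $A\mapsto A\pinv$ is smooth on the manifold of injective linear maps \cite{zbMATH03408807}, and $\dd f(x)$ is injective for every $f\in\Embsp$ via the Morrey embedding \cref{eq:MorreyInequality}, the map $f\mapsto \DOp[f]$ is smooth into an appropriate space of bounded multipliers, and the same holds for $f\mapsto\varLambda_f^s$. A multiplier argument will then transport smoothness to $f\mapsto\RemainderOp[s][f]$.

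\textbf{Derivation of the formula.} I would compute the Gateaux derivative by differentiating the defining expression
\[
  \RemainderOp[s][f] u(x,y) = \frac{u(y)-u(x) - \DOp[f] u(x)\,(f(y)-f(x))}{\abs{f(y)-f(x)}^s}
\]
directly with respect to $f$ in direction $v\in\Sobo[s,p][][\Domain][\AmbSpace]$. The three $f$-dependencies contribute one summand each. Differentiating the denominator produces the last term $-s\,\RemainderOp[s][f] u\,\inner{\DeltaOp[1][f] f,\DeltaOp[1][f] v}$ at once. For $\DOp[f] u$, I would use the standard identity
\[
  \partial_f(\dd f\pinv)\cdot \dd v = (\dd f\adj \dd f)^{-1}\,\dd v\adj\,\Npr_f - \dd f\pinv\,\dd v\,\dd f\pinv,
\]
which gives $\partial_f \DOp[f] u\cdot v = \DOp[f] u\,\pars{\DOp[f] v}^\transp\Npr_f - \DOp[f] u\,\DOp[f] v$. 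Combining this with the elementary derivative of $f(y)-f(x)$, then applying the identity $\Npr_f(x)(f(y)-f(x)) = \abs{f(y)-f(x)}^s\,\RemainderOp[s][f] f(x,y)$ to rewrite the $\Npr_f$-contribution, and recognizing the remaining two pieces as $-\DOp[f] u(x)\,\RemainderOp[s][f] v(x,y)$, everything assembles into the three-term formula \cref{eq:DRsf}.

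\textbf{Main obstacle.} The principal technical step, in my view, is to promote the Gateaux derivative to a \emph{Fréchet} derivative into $\ContOps\pars[\big]{\Sobo[s,p][][\Domain][\AmbSpace];\,\Lebesgue[p][\Measure[f_0]][\Domain\times\Domain][\AmbSpace]}$: each summand must be bounded uniformly as an operator in $v\in\Sobo[s,p]$, and the second-order remainder must be $o(\norm{h})$ in the operator norm. For the first summand $\pars{\DOp[f] u\circ\prx}\,\RemainderOp[s][f] v$, I would bound $\RemainderOp[s][f] v$ in $\Lebesgue[p][\Measure[f]]$ via \cref{lem:RfsIsBounded} and $\DOp[f] u$ in $\Lebesgue[\infty]$ via Morrey's inequality \cref{eq:MorreyInequality}. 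For the second summand, \cref{lem:RfsIsBounded} with $u=f$ controls $\RemainderOp[s][f] f$, and the multiplier $\pars{\DOp[f] u}\pars{\DOp[f] v}^\transp$ is again essentially bounded. For the third, the pointwise bound $\abs{\DeltaOp[1][f] v(x,y)}\leq C\,\norm{v}_{\Holder[1,\alpha]}$ (from $\Sobo[s,p]\hookrightarrow\Holder[1,\alpha]$ and the biLipschitz character of $f$ on small patches) together with $\DeltaOp[1][f] f\in\Lebesgue[\infty]$ dominates $\RemainderOp[s][f] u$ in $\Lebesgue[p][\Measure[f]]$. Equivalence of the $\Measure[f]$- and $\Measure[f_0]$-norms on the compact manifold $\Domain$ (in the spirit of \cref{lem:equivalence}) closes the boundedness checks.

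\textbf{Higher-order smoothness.} To upgrade from $\Holder[1]$ to smooth, I would iterate: each summand in \cref{eq:DRsf} is built from the same finite family of ingredients ($\DOp[f]$, $\Npr_f$, $\varLambda_f^s$, and $\RemainderOp[s][f]$ applied to various inputs), and differentiating any such summand produces only further sums of the same structural type. Since each ingredient depends smoothly on $f$ via the pseudoinverse-and-multiplier argument, the boundedness estimates above apply verbatim to every derivative, propagating smoothness to all orders.
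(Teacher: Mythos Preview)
Your proposal is correct and follows essentially the same approach as the paper: both reduce smoothness to that of $f\mapsto\varLambda_f^\beta$ and $f\mapsto\DOp[f]$ via \cref{eq:MagicFormula}, invoke smoothness of the pseudoinverse on fixed-rank matrices, and iterate. The only cosmetic difference is that the paper derives \cref{eq:DRsf} by applying the product rule to \cref{eq:MagicFormula} after establishing the derivatives of $\varLambda^\beta$ and $\DOp$ separately, whereas you differentiate the defining expression for $\RemainderOp[s][f] u$ directly; the ingredients and bounds are the same.
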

\begin{proof}
	We employ \cref{eq:MagicFormula} to play this back to the smoothness of the following maps for $\beta > 0$:
	\begin{gather*}
		\varLambda^\beta \colon \Emb[s,p][][\Domain][\AmbSpace] \to \Lebesgue[\infty][][\Domain][\R], 
		\quad 
		f \mapsto \varLambda_{\smash{f}}^\beta,
		\quad 
		\text{and}
		\\
		\DOp 
		\colon 
		\Sobo[s,p][][\Domain][\AmbSpace] 
		\to 
		\ContOps \pars[\big]{
			\Sobo[s,p][][\Domain][\AmbSpace]
			;
			\Lebesgue[\infty][][\Domain][\Hom(\AmbSpace;\AmbSpace)]
		}
		,
		\quad
		f \mapsto \DOp[f]
		.
	\end{gather*}
	For $\varLambda^\beta$ we use the Morrey embedding $\Emb[s,p][][\Domain][\AmbSpace] \subset \Holder[1,\alpha][][\Domain][\AmbSpace]$ to see that the map
	$\varLambda^\beta$ is continuous.
	It is then elementary to compute the Gateaux derivative
	(see \cref{eq:DeltaOpAndmuf} for the definition of $\DeltaOp[1][f]$):
	\begin{align*}
		D\varLambda^\beta_{\smash{f}}(f) \, u 
		= 
		- \beta \inner{ \DeltaOp[1][f] \! f, \DeltaOp[1][f] u } \, \varLambda_{\smash{f}}^\beta
		=
		- \beta \inner{ \DeltaOp[1][f_0] f, \DeltaOp[1][f_0] u } \, \varLambda_{\smash{f}}^{\beta+2}
		\quad 
		\text{for $u \in \Sobo[s,p][][\Domain][\AmbSpace]$.}
	\end{align*}
	So, we see that $\smash{D \varLambda^\beta_f}$ is continuous, too, and hence $\varLambda^\beta$ is Fréchet differentiable.
	Iterating on the above identity now shows that $\varLambda^\beta$ is smooth.
	For $\DOp$ we recall from \cref{eq:FortunateIdentity} that 		
	$\DOp[f] u(x) = \pars{ \DOp[f_0] u(x) } \pars{ \DOp[f_0] f(x) }\pinv$.
	Next we use the fact that $A \mapsto A\pinv$ is a smooth diffeomorphism on the submanifold of $\AmbDim \times \AmbDim$-matrices 
	of rank $\DomDim$, see~\cite{zbMATH03408807}.
	In particular, using the formula for the Fréchet derivative of the pseudoinverse, we get the following formula:
	\begin{equation}
		\pars[\big]{ D \pars{f \mapsto \DOp[f] u }(f) \, v }
		=
		- \DOp[f] u \DOp[f] v
        +
        \DOp[f] u \DOp[f] v^\transp \pars{ \id_\AmbSpace - \DOp[f] f }
		.
		\label{eq:DDf}
	\end{equation}
	Now recursion shows the smoothness of $\DOp$.
	Finally, we apply the product rule to \cref{eq:MagicFormula} to show smoothness of $\RemainderOp[s]$ and to derive the formula \cref{eq:DRsf}.
\end{proof}

\begin{proposition}\label{prop:FrechetDerivative}
Let $2 \, \DomDim < p < \infty$ and $s \ceq 2 - \DomDim/p$ such that $\alpha \ceq s - \DomDim/p > 1$. 
Then the energy $\Energy_p$ is $\ceil{p-1}$-times continuously Fréchet differentiable on $\Embsp$ (and smooth if $p$ is an even integer).
The derivative is given by
\begin{equation}
	D\Energy_p(f)\,v = p \, \cB_1(f)(f,v)- 2\,p \, \cB_2(f)(f,v) + \cB_3(f)(f,v),
	\label{eq:DE}
\end{equation}
where the bilinear forms $\cB_1(f)$, $\cB_2(f)$, and $\cB_3(f)$ are given by
\begin{align*}
	\cB_1(f) ( u, w )
	& \ceq 
	\int_\Domain \! \int_\Domain
		\abs{ \RemainderOp[s][f] f }^{p-2} 
		\inner{ \RemainderOp[s][f] u, \RemainderOp[s][f] w}
	\dd \Measure[f] 
	,
	\\
	\cB_2(f) ( u, w )
	& \ceq 
	\int_\Domain \! \int_\Domain
		\abs{ \RemainderOp[s][f] f }^{p} 
		\inner{ \DeltaOp[1][f] u, \DeltaOp[1][f] w }
	\dd \Measure[f]
	,
	\\
	\cB_3(f) ( u, w )
	& \ceq 
	\int_\Domain \! \int_\Domain
		\abs{ \RemainderOp[s][f] f }^{p} 
		\pars[\Big]{
			\inner[\big]{ \DOp[f] u \circ \prx, \DOp[f] w \circ \prx } 
			+ 
			\inner[\big]{ \DOp[f] u \circ \pry, \DOp[f] w \circ \pry } 
		}
    \dd \Measure[f]
	,
\end{align*}
where $\prx(x,y) = x$, $\pry(x,y) = y$ 
denote the projections onto the Cartesian factors.
\end{proposition}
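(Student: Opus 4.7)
The plan is to view $\Energy_p(f) = \IInt \abs{\RemainderOp[s][f] f}^p \dd \Measure[f]$ as a composition of the smooth operator-valued map $f \mapsto \RemainderOp[s][f]$ from \cref{lem:RfsIsSmooth}, the self-application to $f$, the measure $\dd \Measure[f] = \dvol_f(x)\,\dvol_f(y)/\abs{f(y) - f(x)}^\DomDim$, and the Nemytskii operator induced by $y \mapsto \abs{y}^p$ on $\AmbSpace$. The differentiability class $\ceil{p-1}$, and smoothness when $p$ is an even integer, will arise from the regularity of $y \mapsto \abs{y}^p$ alone; the uniform $L^p(\Measure[f])$-bound on $\RemainderOp[s][f] f$ from \cref{lem:RfsIsBounded} together with Hölder's inequality in the conjugate exponents $(p,\,p/(p-1))$ ensures that every factor produced by the chain rule pairs into a continuous linear functional of~$v$ depending continuously on $f$, so Gateaux derivatives automatically upgrade to Fréchet derivatives.

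For the formula \cref{eq:DE}, I would apply the chain rule. Setting $F(f) \ceq \RemainderOp[s][f] f$ and using linearity of $\RemainderOp[s][f]$ in its argument,
\begin{equation*}
	DF(f) \, v = \RemainderOp[s][f] v + \pars[\big]{D(f \mapsto \RemainderOp[s][f] u)(v)}\big|_{u=f}.
\end{equation*}
Substituting $u = f$ into \cref{eq:DRsf} with $\DOp[f] f = \Tpr_f$ produces four summands. The crucial geometric observation is that $\RemainderOp[s][f] f(x,y) = \Npr_f(x)(f(y)-f(x))/\abs{f(y)-f(x)}^s$, so $\Tpr_f(x)\,\RemainderOp[s][f] f(x,y) = 0$. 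Pairing $DF(f)\,v$ with $\RemainderOp[s][f] f$ through $D\abs{y}^p = p\abs{y}^{p-2}\inner{y,\cdot}$ therefore annihilates both summands carrying a factor $\Tpr_f \circ \prx$, and a short calculation yields
\begin{equation*}
	D\abs{\RemainderOp[s][f] f}^p \, v = p\,\abs{\RemainderOp[s][f] f}^{p-2}\,\inner[\big]{\RemainderOp[s][f] f,\, \RemainderOp[s][f] v} - sp\,\abs{\RemainderOp[s][f] f}^p\,\inner[\big]{\DeltaOp[1][f] f,\, \DeltaOp[1][f] v}.
\end{equation*}

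Next I would account for the measure variation. The factor $\abs{f(y)-f(x)}^{-\DomDim}$ contributes an additional $-\DomDim\,\abs{\RemainderOp[s][f] f}^p\,\inner{\DeltaOp[1][f] f,\, \DeltaOp[1][f] v}\,\dd\Measure[f]$, which combines with the previous $-sp$ term to yield the $\cB_2$-coefficient $-(sp+\DomDim) = -2p$, precisely because $s = 2 - \DomDim/p$. The Riemannian volume varies as $D\vol_f(v) = \tr(\DOp[f] v)\,\vol_f$, giving a volume contribution $\pars[\big]{\tr(\DOp[f] v)\!\circ\!\prx + \tr(\DOp[f] v)\!\circ\!\pry}\,\dd\Measure[f]$. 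The elementary identity
\begin{equation*}
	\inner{\DOp[f] f,\, \DOp[f] v}_\Frob = \tr\pars[\big]{\Tpr_f\,\DOp[f] v} = \tr(\DOp[f] v),
\end{equation*}
which follows from $\DOp[f] f = \Tpr_f$ together with $\dd f\pinv\,\Tpr_f = \dd f\pinv$, recognizes this as $\cB_3(f)(f,v)$, and assembling the three pieces gives \cref{eq:DE}.

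I expect the main obstacle to be careful bookkeeping rather than any subtle analytic estimate: keeping track of the four summands from \cref{eq:DRsf}, verifying the tangential cancellation $\Tpr_f\,\RemainderOp[s][f] f = 0$ under the inner product pairing, and collapsing the measure and Taylor-remainder contributions via the defining relation $sp + \DomDim = 2p$. Once the formula is in hand, continuity of $\cB_1$, $\cB_2$, and $\cB_3$ in $f$ on $\Embsp$ follows from Hölder's inequality applied pointwise inside the integrals together with the smoothness already established in \cref{lem:RfsIsBounded,lem:RfsIsSmooth}.
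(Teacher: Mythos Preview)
Your proposal is correct and follows essentially the same approach as the paper: both differentiate the integrand via the chain rule using the smoothness of $f \mapsto \RemainderOp[s][f]$ from \cref{lem:RfsIsSmooth}, exploit the perpendicularity $\Tpr_f(x)\,\RemainderOp[s][f] f(x,y) = 0$ to kill the tangential terms, compute the variation of the volume factor as $\inner{\DOp[f] f, \DOp[f] v}\,\vol_f$, and identify the limitation on smoothness as coming solely from $z \mapsto \abs{z}^p$. The paper makes one technical point more explicit---it first pulls everything back to a fixed reference measure $\Measure[f_0]$ via the Jacobian $J_f$ and the factor $\varLambda_f^{\DomDim}$ before differentiating, which is the standard way to justify differentiation under an integral whose measure depends on the parameter---whereas you differentiate $\dd\Measure[f]$ directly; conversely, you spell out the coefficient identity $sp + \DomDim = 2p$ and the trace identity $\inner{\DOp[f] f, \DOp[f] v}_\Frob = \tr(\DOp[f] v)$ that the paper leaves implicit.
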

\begin{proof}
	We fix some arbitrary embedding $f_0 \in \Emb[s,p][][\Domain][\AmbSpace]$ and 
	express the Riemannian volume form $\vol_f(x)$ of $f$ in terms 
	of Riemannian volume form $\vol_{f_0}(x)$ of $f_0$ 
	and
	the Jacobian $J_f(x) \ceq \sqrt{\det( \dd f(x)\adj \dd f(x) )}$ (where the adjoint is with respect to the pullback metric $g_{f_0}$):
	$\vol_f(x) = J_f(x) \, \vol_{f_0}$.
	Thus,
	\[
		\Energy_p(f)
		=
		\int_\Domain \! \int_\Domain
			\abs{ \RemainderOp[s][f] f (x,y)}^{p} 
			\,
			\varLambda_f^\DomDim(x,y)
			\, J_f(x) \, J_f(y)
		\dd \Measure[f_0](x,y) 
		.
	\]
	We know from \cref{lem:RfsIsSmooth} that 
	$f \mapsto \varLambda^\DomDim_{\smash{f}} $
	and
	$f \mapsto \RemainderOp[s][f] f$ are smooth.
	A straight-forward calculation shows that $J \colon \Emb[s,p][][\Domain][\AmbSpace] \to \Lebesgue[\infty][][\Domain][\R]$, $f \mapsto J_f$ is continuous and Gateaux differentiable and that its Gateaux derivative is given by
	$DJ(f) \, u = \smash{\inner{\DOp[f] f,\DOp[f] u} \, J_{\smash{f}}}.$
	Now we can differentiate this further and recursively apply this identity.
	From \cref{lem:RfsIsSmooth} we know already that $f \mapsto \DOp[f]$ is smooth.
	So, the product rule implies that $J$ is also smooth.
	Now the formula \cref{eq:DE} for $D \Energy_p$ can be derived from this 
	by using the chain and the product rules and by keeping in mind that $\RemainderOp[s][f] f(x,y)$ is always perpendicular to the image of $\dd f(x)$. 
	We also see that the only limitation to smoothness of $\Energy_p$ is the limited smoothness of
	$z \mapsto \abs{z}^p$.
\end{proof} 


\subsection{Sobolev metric}
\label{sec:Preconditioner}

As already sketched in the introduction, we seek to use the tangent-point energy $\Energy \ceq \Energy_p$ for Tikhonov regularization.
To make use of this approach by numerically minimizing the functional (see \cref{sec:Optimization}), we need a suitable preconditioner, which can be derived from a preconditioner for the gradient descent minimization of the tangent-point energy.
Such a preconditioner was proposed and tested in \cite{RepulsiveC} and \cite{RepulsiveS}, where the approach taken there is motivated by \cite{zbMATH07456145}:
Instead of the Hessian $D^2 \Energy(f)$ on  $X \times X$,
a bilinear form $\Metric_f$ on  $X \times Y$ is employed. 
Here $Y \supset X$ is some further Banach space.
The difficulty is to guarantee that the associated Riesz operator $\Riesz_f \colon X \to Y'$ is continuously invertible and that $D \Energy(f) \in Y' \subset X'$.
If this is the case and if $\Metric_f$ is chosen to be symmetric and positive-definite on $X \times X \subset X \times Y$, then the update direction, the \emph{downward Sobolev gradient}, $v(f) \ceq - \Riesz_f^{-1} D \Energy(f)$ is guaranteed to be descending (unless $f$ is a critical point).
Moreover, the following are desirable:
\begin{enumerate}[label=(D\arabic*),ref=(D\arabic*),noitemsep]
	\item The update direction $v(f)$ is at least as regular as $f$.
	\label{item:DesirableRegularity}
	\item The map $f \mapsto v(f)$ is locally Lipschitz continuous (or even smooth).
	\label{item:DesirableLipschitz}
	\item The construction is covariant in the sense that $v(f \circ \varphi) = v(f) \circ \varphi$ for all diffeomorphisms $\varphi \colon \Domain \to \Domain$.
	\label{item:DesirableCovariance}
\end{enumerate}
The first condition guarantees that no regularity is lost, so that further optimization steps can be taken one after the other. 
The second condition guarantees stability of the method, provided that some rudimentary line search is performed. 
While the third condition is not strictly necessary, it is still natural to require, since the displacement field $v(f) \circ f^{-1} \colon \varSigma \ceq f(M) \to \AmbSpace$ is then independent of the choice of the parameterization.

Typically, fulfilling these three conditions leads to some nice robustness of the algorithm against the choice of discretization, in particular against the choice of meshes representing the surface $\varSigma$.
In particular, the first condition prevents the preconditioning quality of $\Metric$ from deteriorating under mesh refinement. 

\medskip

We now turn to the construction of such a Sobolev metric $\Metric$. Since this construction involves various technicalities that are beyond the scope of the present work, we only sketch the relevant ideas.

\subsubsection{\texorpdfstring{$D\Energy$}{DEp} as a nonlinear operator of order \texorpdfstring{$2 \, s$}{2s}}
\label{sec:DEAsNonlinearOperator}

The first observation is that under certain circumstances, the operator $f \mapsto D \Energy(f)$ can be interpreted as a nonlinear differential operator of fractional order $2 \, s$.
(This has to be taken with a grain of salt; as we will see, the order of differentiability of such a nonlinear operator is a somewhat flexible concept.)

\begin{proposition}\label{prop:DerivativeExtended}
	Let $2 \, \DomDim < p < r < \infty$, $s=2-\DomDim/p > 1$, and $\DomDim/r \leq \diffoffset < \DomDim/p $.
	Then for every $f \in \Emb[s+\diffoffset,r][][\Domain][\AmbSpace]$,
	the linear form $D\Energy_p(f)$ can be extended to a linear form 
	$\Extension_p(f) \in (\Sobo[s-\diffoffset,r'\!][][\Domain][\AmbSpace])'$,
	where $r' \ceq r / (r-1)$ is the Hölder conjugate of $r$.
	The nonlinear operator
	\begin{equation*}
		\Extension_p \colon \Emb[\diffoffset+s,r] 
		\to  
		(\Sobo[s-\diffoffset,r'\!][][\Domain][\AmbSpace])'
		= \Sobo[\diffoffset-s,r][][\Domain][\AmbSpace] 
	\end{equation*}
	is $\ceil{p-2}$-times continuously differentiable (and smooth if $p$ is an even integer).
\end{proposition}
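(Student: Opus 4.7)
My plan is to start from the explicit derivative formula of \cref{prop:FrechetDerivative}, $D\Energy_p(f) v = p \, \cB_1(f)(f,v) - 2 \, p \, \cB_2(f)(f,v) + \cB_3(f)(f,v)$, and to show that, under the strengthened regularity $f \in \Emb[s+\diffoffset, r]$, each bilinear pairing $\cB_i(f)(f, \cdot)$ extends from $\Sobo[s,p]$ to a bounded linear functional on $\Sobo[s-\diffoffset, r']$; summing then yields $\Extension_p(f)$. The key algebraic step is to dualize the $v$-dependence so that $v$ enters only through the fractional difference $\DeltaOp[s-\diffoffset][f] v$ and through pointwise first-order terms $\DOp[f] v$. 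For this I use the identity $\DeltaOp[1][f] v = \abs{f(y)-f(x)}^{s-\diffoffset-1} \DeltaOp[s-\diffoffset][f] v$ together with the decomposition $\RemainderOp[s][f] v = \abs{f(y)-f(x)}^{-\diffoffset} \DeltaOp[s-\diffoffset][f] v - \abs{f(y)-f(x)}^{1-s} \DOp[f] v(x) \, \DeltaOp[1][f] f(x,y)$. For the residual terms carrying $\DOp[f] v$, Fubini in the remaining variable collapses them to single integrals $\int_\Domain \inner{K(f)(x), \DOp[f] v(x)} \dvol_f(x)$; using $\DOp[f] v = (\DOp[f_0] v) (\DOp[f_0] f)\pinv$ as in \cref{eq:FortunateIdentity} and Sobolev duality (locally in charts), these extend to bounded functionals on $v \in \Sobo[s-\diffoffset, r']$ once the kernels $K(f) (\DOp[f_0] f)\pinv$ are shown to have appropriate positive-order Sobolev regularity — which follows because $f \in \Emb[s+\diffoffset, r]$ forces $\DOp[f] f \in \Sobo[s+\diffoffset-1, r]$.

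The analytic bounds then come from Hölder's inequality applied to each double integral with exponents $r$ and $r'$: this yields $\abs{\cB_i(f)(f, v)} \leq \Phi_i(f) \, \norm{v}_{\Sobo[s-\diffoffset, r'](f)}$, where $\Phi_i(f)$ is a double integral involving only powers of $\abs{\RemainderOp[s][f] f}$, $\abs{\DeltaOp[1][f] f}$, and weights $\abs{f(y)-f(x)}^{\pm\cdots}$ against $\Measure[f]$. The main obstacle is to verify that $\Phi_i(f) < \infty$ and depends continuously on $f \in \Emb[s+\diffoffset, r]$. The decisive input is the Morrey embedding $\Sobo[s+\diffoffset, r] \hookrightarrow \Holder[1,\alpha]$ with $\alpha = s + \diffoffset - \DomDim/r - 1$, giving $1 + \alpha - s = \diffoffset - \DomDim/r \geq 0$ and hence the pointwise bound $\abs{\RemainderOp[s][f] f(x,y)} \leq C \abs{y-x}^{\diffoffset - \DomDim/r}$. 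This non-negative exponent — unavailable at the baseline regularity $\Sobo[s,p]$ — provides exactly the extra decay needed to absorb the new singular factors $\abs{f(y)-f(x)}^{-\diffoffset}$ and $\abs{f(y)-f(x)}^{1-s}$ introduced by the algebraic step, making everything integrable against $\Measure[f]$. Boundedness of the remaining factors of $\RemainderOp[s][f]$ at the higher regularity level follows from a version of \cref{lem:RfsIsBounded} applied with the parameters $(s+\diffoffset, r)$ in place of $(s, p)$.

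For the differentiability count, the same machinery as in the proof of \cref{prop:FrechetDerivative} applies: the maps $f \mapsto \RemainderOp[s][f]$, $f \mapsto \DOp[f]$, $f \mapsto \vol_f$, and $f \mapsto \Measure[f]$ are smooth on $\Emb[s+\diffoffset, r]$ via extensions of \cref{lem:RfsIsSmooth}, so the only non-smooth ingredient is the Nemytskii operator $z \mapsto \abs{z}^{p-2} z$ appearing inside $\cB_1$. This operator is $\ceil{p-2}$-times continuously differentiable on the relevant $\Lebesgue[q]$-space (and smooth when $p$ is an even integer) — precisely one order less than the $z \mapsto \abs{z}^p$ appearing in $\Energy_p$ itself — which yields the claimed $\ceil{p-2}$-regularity of $\Extension_p$.
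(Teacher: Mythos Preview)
Your overall architecture — splitting $D\Energy_p(f)$ into the three bilinear pieces $\cB_1,\cB_2,\cB_3$ and extending each to $\Sobo[s-\diffoffset,r'\!]$ — is exactly the paper's route, and your differentiability count via the Nemytskii map $z\mapsto \abs{z}^{p-2}z$ is correct. But the central algebraic step you propose contains a genuine gap.

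Your decomposition
\[
\RemainderOp[s][f] v
=
\abs{f(y)-f(x)}^{-\diffoffset}\,\frac{v(y)-v(x)}{\abs{f(y)-f(x)}^{s-\diffoffset}}
-
\abs{f(y)-f(x)}^{1-s}\,\DOp[f] v(x)\,\DeltaOp[1][f] f(x,y)
\]
is algebraically valid but analytically fatal: since $s-\diffoffset>1$, for any $v$ with $\DOp[f] v(x_0)\neq 0$ one has $\abs{v(y)-v(x)}\sim\abs{f(y)-f(x)}$ near the diagonal, so the raw quotient $(v(y)-v(x))/\abs{f(y)-f(x)}^{s-\diffoffset}$ behaves like $\abs{f(y)-f(x)}^{1-(s-\diffoffset)}$ with a \emph{negative} exponent, and its $\Lebesgue[r'\!](\Measure[f])$-norm diverges. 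Correspondingly, the kernel $K(f)(x)$ arising from your Fubini step for the $\DOp[f] v$-piece is a divergent integral (the near-diagonal exponent is $(p-1)(\diffoffset-\DomDim/r)+1-s-\DomDim$, which is $\leq -\DomDim$ when $\diffoffset$ is close to $\DomDim/r$). You have split a finite quantity into two infinite ones, and neither piece can be bounded by $\norm{v}_{\Sobo[s-\diffoffset,r'\!]}$ separately.

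The remedy is to avoid the raw difference quotient at order $>1$ altogether. Because the numerator of $\RemainderOp[\sigma][f] v$ does not depend on $\sigma$, one has the clean identity $\RemainderOp[s][f] v=\abs{f(y)-f(x)}^{-\diffoffset}\,\RemainderOp[s-\diffoffset][f] v$, with no $\DOp[f] v$-remainder. Substituting this into $\cB_1$ and absorbing the factor $\abs{f(y)-f(x)}^{-\diffoffset}$ into the $f$-dependent part gives, with $\varepsilon=\diffoffset/(p-1)$,
\[
\cB_1(f)(f,v)
=
\int_\Domain\!\int_\Domain
\abs{\RemainderOp[s+\varepsilon][f] f}^{p-2}\,
\inner{\RemainderOp[s+\varepsilon][f] f,\,\RemainderOp[s-\diffoffset][f] v}
\dd\Measure[f],
\]
which is precisely the paper's exponent-shifting trick; Hölder with exponents $q=(p-1)r$ and $r'$ then reduces matters to $\norm{\RemainderOp[s+\varepsilon][f] f}_{\Lebesgue[q](\Measure[f])}$ (handled via $\Sobo[s+\diffoffset,r]\hookrightarrow\Sobo[s+\varepsilon,q]$ and \cref{lem:RfsIsBounded}) and $\norm{\RemainderOp[s-\diffoffset][f] v}_{\Lebesgue[r'\!](\Measure[f])}$ (handled by \cref{lem:RfsIsSmoothLowOrder}). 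The same idea applies to $\cB_2$: shift to $\DeltaOp[1-\diffoffset][f] v$ with $1-\diffoffset\in(0,1)$, not to a quotient of order $s-\diffoffset>1$. Your Morrey pointwise bound on $\RemainderOp[s][f] f$ is a correct observation but is not by itself strong enough to make the $\Phi_i(f)$ finite; the paper instead uses Sobolev embeddings into the appropriate $\Sobo[s+\varepsilon,q]$-scale.
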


\begin{proof}
	Recall from \cref{eq:DE} that the derivative in direction $v$ can be written as
	$D\Energy_p(f) \, v = p \, \cB_1(f)(f,v)- 2\,p \, \cB_2(f)(f,v) + \cB_3(f)(f,v)$.
	The existence of the extension and certain smoothness results for $\cB_1$, $\cB_2$, $\cB_3$ are shown in \cref{prop:BilinearFormsExtended} below.
	For the smoothness observe that albeit $\cB_1$ is only $\ceil{p-3}$-times differentiable, $\cB_1(f)(f,\cdot)$ yields a further derivative 
	since the map $z \mapsto \abs{z}^{p-2} \, z$ is $\ceil{p-2}$-times continuously differentiable.
\end{proof}

\begin{proposition}\label{prop:BilinearFormsExtended}
	Under the conditions of \cref{prop:DerivativeExtended}
	each of $\cB_1$, $\cB_2$, $\cB_3$  from \cref{prop:FrechetDerivative}
	constitutes a bounded bilinear form 
	\begin{equation*}
		\cB_k(f) \colon \Sobo[s+\diffoffset,r][][\Domain][\AmbSpace] \times \Sobo[s-\diffoffset,r'\!][][\Domain][\AmbSpace] \to \R,
	\end{equation*}
	where $r' \ceq r/(r-1)$ is the Hölder conjugate of $r$.
	Moreover, the map
	\begin{equation*}
		\cB_1 \colon \Emb[s+\diffoffset,r][][\Domain][\AmbSpace] \times \Sobo[s+\diffoffset,r][][\Domain][\AmbSpace] \times \Sobo[s-\diffoffset,r'\!][][\Domain][\AmbSpace] \to \R,
		\quad 
		(f,u,w) \mapsto \cB_1(f)(u,w)
	\end{equation*}
	is $\ceil{p-3}$-times continuously differentiable,
	and the corresponding maps $\cB_2$ and $\cB_3$ are $\ceil{p-1}$-times continuously differentiable (and smooth if $p$ is an even integer).
\end{proposition}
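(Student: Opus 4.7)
The approach is to reduce each bilinear form $\cB_k(f)(u,w)$ to an integral of a product that can be estimated by Hölder's inequality in an $L^r(\Measure[f])$--$L^{r'}(\Measure[f])$ duality, redistributing the asymmetric regularity between $u\in\Sobo[s+\delta,r]$ and $w\in\Sobo[s-\delta,r'\!]$; the $f$-dependence is then handled by general smoothness of the constituent operators together with the outer scalar nonlinearity.

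First I would observe that the Morrey inequality \cref{eq:MorreyInequality} combined with $\delta\geq n/r$ forces $f\in\Holder[1,\alpha_0]$ with $\alpha_0\ceq s+\delta-n/r-1\geq s-1$; together with the bi-Lipschitzness of the $C^1$-embedding $f$ on the compact $\Domain$, this gives the uniform bound $\norm{\RemainderOp[s][f] f}_{\Lebesgue[\infty]}\leq C(f)$, so both $\abs{\RemainderOp[s][f]f}^{p-2}$ and $\abs{\RemainderOp[s][f]f}^p$ reduce to bounded multipliers in the integrands. The key algebraic identity for the asymmetric pair is
\[
    \RemainderOp[s][f] u\,\cdot\,\RemainderOp[s][f] w
    \;=\;
    \RemainderOp[s+\delta][f] u\,\cdot\,\RemainderOp[s-\delta][f] w,
\]
which arises from $\RemainderOp[\sigma][f]=\abs{f(y)-f(x)}^{s-\sigma}\RemainderOp[s][f]$. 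Applying Hölder in $L^r(\Measure[f])\times L^{r'}(\Measure[f])$, the first factor is controlled by $\norm{u}_{\Sobo[s+\delta,r]}$ via a direct application of \cref{lem:RfsIsBounded} (valid since $(s+\delta)-n/r\geq s>1$), and the second by $\norm{w}_{\Sobo[s-\delta,r'\!]}$ via the localization argument addressed below. The analogous estimates for $\cB_2$ and $\cB_3$ are simpler because $\DeltaOp[1][f]$ and $\DOp[f]$ are bounded from $\Sobo[1,q]$ into the appropriate $L^q$-spaces, and $s\pm\delta>1$ (using $\delta<n/p$ and $p>2n$) ensures that both $u$ and $w$ lie in $\Sobo[1,q]$.

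For the smoothness statements I would decompose $(f,u,w)\mapsto\cB_k(f)(u,w)$ into the constituents $f\mapsto\RemainderOp[s][f]$, $f\mapsto\DeltaOp[1][f]$, $f\mapsto\DOp[f]$, and $f\mapsto\Measure[f]$, all smooth on $\Emb[s+\delta,r]$ (by \cref{lem:RfsIsSmooth} and its straightforward analogues for the other three, using smoothness of the Moore--Penrose pseudoinverse), together with the outer scalar nonlinearity $z\mapsto\abs{z}^q$. Bilinearity in $(u,w)$ is always smooth. Since $\abs{z}^q$ is $\lfloor q\rfloor=\ceil{q-1}$-times continuously differentiable and smooth when $q$ is an even integer, taking $q=p-2$ in $\cB_1$ yields $\ceil{p-3}$-fold continuous differentiability and $q=p$ in $\cB_2,\cB_3$ yields $\ceil{p-1}$-fold, with joint smoothness when $p$ is an even integer, as claimed.

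The main obstacle is the bound $\norm{\RemainderOp[s-\delta][f] w}_{L^{r'}(\Measure[f])}\leq C(f)\norm{w}_{\Sobo[s-\delta,r'\!]}$, because \cref{lem:RfsIsBounded} as stated requires a Morrey hypothesis on the argument that fails here ($s-\delta-n/r'$ is typically negative). Fortunately, the purely Euclidean \cref{lem:EuclideanRsIsBounded} requires only $s'\in(1,2)$ and $q\in\intervalcc{1,\infty}$, both of which are satisfied by $s'=s-\delta$ and $q=r'$ (we have $s-\delta>1$ since $\delta<n/p$ and $p>2n$). Globalizing via the graph-patch identity \cref{eq:RsfInGraphPatch} introduces one extra term of the shape $(Dw\cdot\ldots)\circ\prx\cdot\remainderOp[s-\delta] h$; this is handled by Hölder with a Sobolev embedding on $Dw$ (since $s-\delta-1>0$ yields $Dw\in L^a$ for some $a>r'$), combined with the scaling identity $\remainderOp[s-\delta] h=\abs{y-x}^{2\delta}\,\remainderOp[s+\delta] h$ and the Euclidean estimate applied to $h\in\Sobo[s+\delta,r]$, which supplies the complementary $L^b(\mu)$-bound with $1/a+1/b=1/r'$.
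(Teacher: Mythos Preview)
Your $L^\infty$ bound on $\RemainderOp[s][f]f$ is a correct and pleasant observation (it follows from $f\in\Holder[1,\alpha_0]$ with $\alpha_0=s+\delta-n/r-1\geq s-1$), and for $\cB_1$ it legitimately replaces the paper's three-way Hölder split by a two-way one. But your treatment of $\cB_2$ and $\cB_3$ has a real gap: once you pull $\abs{\RemainderOp[s][f]f}^p$ out as an $L^\infty$ factor, the remaining integrals
\[
\IInt \inner[\big]{\DeltaOp[1][f]u,\DeltaOp[1][f]w}\dd\mu_f
\qquad\text{and}\qquad
\IInt \inner[\big]{\DOp[f]u\circ\prx,\DOp[f]w\circ\prx}\dd\mu_f
\]
are both infinite for generic $u,w$. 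In the second, Fubini produces $\int_\Domain\abs{f(y)-f(x)}^{-n}\dvol_f(y)$, which diverges logarithmically; the first is the borderline $\sigma=1$ Gagliardo integral and is not controlled by the classical $\Sobo[1,q]$ norm. The paper does \emph{not} discard the $f$-factor here: it shifts a power $\abs{f(y)-f(x)}^{-\delta}$ onto it, writing $\abs{\RemainderOp[s+\delta/p][f]f}^p$ and placing it in $L^{pr}(\mu_f)$ via \cref{lem:RfsIsBounded} together with the embedding $\Sobo[s+\delta,r]\hookrightarrow\Sobo[s+\delta/p,pr]$ (valid precisely when $\delta\geq n/r$). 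This leaves $\DeltaOp[1-\delta][f]w$ and $\abs{f(y)-f(x)}^{\delta}\,\DOp[f]w\circ\prx$ on the $w$-side, both of which are honestly $L^{r'}(\mu_f)$-integrable.

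There is also a smaller issue in your localization of $\RemainderOp[s-\delta][f]w$. Your split forces $\remainderOp[s+\delta]h\in L^{r}(\mu)$ and hence $(Dw\circ\prx)\abs{y-x}^{2\delta}\in L^{a}(\mu)$ with $a=r/(r-2)$; after integrating out $y$ this needs the embedding $\Sobo[s-\delta-1,r']\hookrightarrow L^{r/(r-2)}$, equivalently $s-\delta-1\geq n/r$, which fails for admissible parameters (e.g.\ $n=2$, $p=5$, $r=6$, $\delta=1/3$ gives $4/15<5/15$). The paper's \cref{lem:RfsIsSmoothLowOrder} attaches only $\abs{y-x}^{\delta}$ to $Dw$ and uses $\remainderOp[s]h\in L^{\tau}(\mu)$ via $\Sobo[s+\delta,r]\hookrightarrow\Sobo[s,\tau]$, an embedding that holds under the bare hypothesis $\delta\geq n/r$.
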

\begin{proof}
	For well-definedness it suffices to show that there are constants $C_k(f) \geq 0$ such that
	\begin{align}
		\abs{ \cB_k(f)(u,w) } 
		\leq 
		C_k(f) \norm{u}_{\Sobo[s+\diffoffset,r](f)}  \norm{w}_{\Sobo[s-\diffoffset,r'\!](f)}
		,
		\quad 
		k \in \braces{1,2,3}
		.
		\label{eq:BkInequality}
	\end{align}
	We start with $\cB_1$.
	Let $u\in \Sobo[s+\diffoffset,r][][\Domain][\AmbSpace]$ 
	and 
	$w\in\Sobo[s-\diffoffset,r'\!][][\Domain][\AmbSpace]$. 
	Now we rearrange the exponents in the denominators of $\RemainderOp[s][f] f$ and $\RemainderOp[s][f] w$ to arrive at the following:
	\begin{align*}
		\cB_1(f) ( u, w )
		& =
		\int_\Domain\!\int_\Domain
			\abs{ \RemainderOp[s+\varepsilon][f] f }^{p-2} 
			\inner{ \RemainderOp[s+\varepsilon][f] u, \RemainderOp[s-\diffoffset][f]w}
		\dd \Measure[f]
		\qwith
		\varepsilon \ceq \fdfrac{\diffoffset}{p-1}
		.
	\end{align*}
	We note that $p > 2$, thus $s + \varepsilon < s + \diffoffset < s + \DomDim/p < 2$ and
	$s - \diffoffset > s - \DomDim/p > 1$.
	So the upper indices of $\RemainderOp[*][f]$ lie in the interval $\intervaloo{1,2}$ as they should.
	Now we put
	\[ 
		q \ceq (p-1)\,r
		\qand 
		a \ceq \fdfrac{\pars{p-1}}{\pars{p-2}} \,r 
		= \fdfrac{q}{\pars{p-2}}
		\quad \text{so that} \quad
		\fdfrac{1}{a} + \fdfrac{1}{q} + \fdfrac{1}{r'} = 1
		.
	\]
	By Hölder's inequality, we obtain
	\begin{align*}
		\abs{\cB_1(f) ( u, w )}
		&\leq
		\norm{\RemainderOp[s+\varepsilon][f] f}_{\Lebesgue[q ](\Measure[f])}^{p-2} 
		\norm{\RemainderOp[s+\varepsilon][f] u}_{\Lebesgue[q ](\Measure[f])}^{\phantom{p}}
		\norm{\RemainderOp[s-\diffoffset][f] w}_{\Lebesgue[r'\!](\Measure[f])}^{\phantom{p}}
		.
	\end{align*}
	Notice that we have a Sobolev embedding $\Sobo[s+\diffoffset,r] \hookrightarrow \Sobo[s+\varepsilon,q]$ (cf.~\cref{eq:SobolevInequality}).
	Indeed, we have
	\begin{equation*}
		s + \diffoffset
		>
		s + \varepsilon 
		\qand
		s + \diffoffset - \fdfrac{\DomDim}{r}
		\geq 
		s + \fdfrac{1}{p-1} \pars*{ \diffoffset - \fdfrac{\DomDim}{r}}
		=
		s + \varepsilon - \fdfrac{\DomDim}{q},
	\end{equation*}
	where we used that $p > 2$ and $\diffoffset \geq \DomDim/r$.
	Moreover, we have $s + \varepsilon - \DomDim/q \geq s > 1$. Hence, \cref{lem:RfsIsBounded} shows that $\RemainderOp[s+\varepsilon][f] \colon \Sobo[s+\diffoffset,r] 	\subset \Sobo[s+\varepsilon,q] \to \Lebesgue[q][\Measure[f]]$ is bounded and \cref{lem:RfsIsSmooth} shows that $f \mapsto \RemainderOp[s+\varepsilon][f]$ is smooth.
	So we are left to show that 
	$\RemainderOp[s-\diffoffset][f] w \in \Lebesgue[r'][\Measure[f]][\Domain \times \Domain][\AmbDim]$
	and that it depends smoothly on $f$.
	This is a bit involved, so we redirect this task to \cref{lem:RfsIsSmoothLowOrder} below.

	\medskip

	Let us continue with $\cB_2$.
	Because of $1<s - \diffoffset < 2 - \diffoffset$, we have $0 < 1 - \diffoffset < 1$. 
	So, we can move some fractional derivative from $w$ to $f$:
	\[
		\cB_2(f) ( u, w )
		\ceq 
		\int_\Domain \! \int_\Domain
			\abs{ \RemainderOp[s +\diffoffset/p][f] f }^{p} 
			\inner{ \DeltaOp[1][f] u, \DeltaOp[1-\diffoffset][f] w }
		\dd \Measure[f]
		.
	\]
    Combined with the Hölder inequality, we obtain
	\begin{align*}
		\abs{\cB_2(f) ( u, w )}
		&\leq
		\norm{\RemainderOp[s+\diffoffset/p][f] f}_{\Lebesgue[p r](\Measure[f])}^{p} 
		\norm{\DeltaOp[1][f] u}_{\Lebesgue[\infty]}
		\norm{\DeltaOp[1-\diffoffset][f] w}_{\Lebesgue[r'](\Measure[f])}
		.
	\end{align*}
	The last factor on the right-hand side is bounded by $\norm{w}_{\Sobo[1-\diffoffset,r'](f)} \leq C(f) \norm{w}_{\Sobo[s-\diffoffset,r'](f)}$.
	By \cref{thm:GeometricMorrey}, there is a radius $\delta = r(p,\Energy(f),\DomDim,\AmbDim) > 0$ such that for every $(x,y) \in M \times M$ satisfying $\abs{f(x) - f(y)} < \delta$ 
	there is a smooth curve $\gamma$ from $x$ to $y$ such that the arc length of $f\circ \gamma$
	is greater or equal to $\abs{f(y) - f(x)}/2$. (Such a curve can be found in a graph patch common to $f(x)$ and $f(y)$.)
	So for $\abs{f(x) - f(y)} < \delta$ we may bound as follows:
	\begin{align*}
		\abs{u(y) - u(x)}
		&\leq
		\int_0^1 \abs*{ \tfrac{\dd}{\dd t} (u \circ \gamma)(t) } \dd t
		\leq 
		\int_0^1 \abs*{ \dd u(\gamma(t)) \dd f(\gamma(t))\pinv \, \dd f(\gamma(t)) \, \gamma'(t) } \dd t
		\\
		&\leq
		\norm{ \cD_f u}_{\Lebesgue[\infty]}
		\int_0^1  \abs*{\tfrac{\dd}{\dd t} (f\circ \gamma)(t)} \dd t
		\leq 
		\fdfrac{1}{2} \norm{ \cD_f u}_{\Lebesgue[\infty]} \abs{f(y) - f(x)}.
	\end{align*}
	And for $\abs{f(x) - f(y)} > \delta$ we have 
	\[
		\fdfrac{\abs{u(y) - u(x)}}{\abs{f(x) - f(y)}} 
		\leq 
		\fdfrac{1}{\delta} \pars[\big]{\abs{u(x)} + \abs{u(y)}}.
	\]
	Combined with the Morrey embedding, we obtain 
	\[
		\norm{\DeltaOp[1][f] u}_{\Lebesgue[\infty]}
		\leq 
		\fdfrac{1}{2} \norm{ \cD_f u}_{\Lebesgue[\infty]} + 
		\fdfrac{2}{\delta} \norm{u}_{\Lebesgue[\infty]}
		\leq
		C(f) \norm{u}_{\Sobo[s+\diffoffset,r](f)}
		.
	\]
	By \cref{lem:RfsIsBounded}, we can bound 
	\[
		\norm{\RemainderOp[s+\diffoffset/p][f] f}_{\Lebesgue[p r](\Measure[f])} 
		\leq 
		C(f) \norm{f}_{\Sobo[s+\diffoffset/p,p r](f)}.
	\]
	To bound the latter in terms of $\norm{f}_{\Sobo[s+\diffoffset,r](f)}$, we use the
	Sobolev embedding $\Sobo[s+\diffoffset,r] \subset \Sobo[s+\diffoffset/p,p r]$.
	Indeed, the conditions for this embedding are fulfilled:
	\begin{align*}
		s+\diffoffset > s + \fdfrac{\diffoffset}{p}
		\qand 
		s+\diffoffset - \fdfrac{\DomDim}{r} \geq s + \fdfrac{\diffoffset}{p} - \fdfrac{\DomDim}{p r}
		.
	\end{align*}
	And as above, the first one follows from $p > 1$ and the second is equivalent to $\diffoffset \geq \frac{\DomDim}{p}$.
	This completes the analysis of $\cB_2$:
	\begin{align*}
		\abs{\cB_2(f) ( u, v )}
		&\leq
		C(f)
		\norm{f}_{\Sobo[s+\diffoffset,r](f)}^{p} 
		\norm{u}_{\Sobo[s+\diffoffset,r](f)}
		\norm{w}_{\Sobo[s-\diffoffset,r'](f)}
		.
	\end{align*}
	For the third term $\cB_3$ we observe that
	\begin{align*}
		\abs{\cB_3(f) ( u, v )}
		&\leq
		2 \norm{ \DOp[f] u}_{\Lebesgue[\infty]}
		\int_\Domain \! \int_\Domain
			\abs{ \RemainderOp[s+\diffoffset/p][f] f(x,y) }^{p} 
			\,
			\pars[\big]{ \norm{\DOp[f] v(x)} \abs{f(y)-f(x)}^\diffoffset }
		\dd \Measure[f](x,y)
		\\
		&\leq
		2 \norm{ \DOp[f] u}_{\Lebesgue[\infty]}
		\norm{\RemainderOp[s+\diffoffset/p][f] f}_{\Lebesgue[p r](\Measure[f])}^p
		\pars*{
			\int_\Domain 
				\norm{\DOp[f] v(x)}^{r'}
				\,
				(\varPsi(x))^{r'}
			\, \vol_f(x)
		}^{1/r'}
		,
	\end{align*}
	where
	$
		\varPsi(x)
		\ceq
		\int_\Domain
		\abs{f(y)-f(x)}^{\diffoffset-\DomDim}
		\vol_f(y)
	$. 
	Since $\diffoffset-\DomDim > - \DomDim$ and because of \cref{thm:GeometricMorrey}, 
    this is a bounded function. (This can be seen by using polar coordinates in a graph patch around $x$).
	By reusing $\norm{\RemainderOp[s+\diffoffset/p][f] f}_{\Lebesgue[p r](\Measure[f])} \leq C(f) \norm{f}_{\Sobo[s,r](f)}^p$, we finally gain a bound of the desired form:
	\begin{align*}
		\abs{\cB_3(f) ( u, v )}
		\leq 
		2 \, C(f)
		\norm{f}_{\Sobo[s+\diffoffset,r](f)}^p
		\norm{u}_{\Sobo[s+\diffoffset,r](f)}
		\norm{v}_{\Sobo[1,r'](f)}
		.
	\end{align*}
\end{proof}

\begin{lemma}\label{lem:RfsIsSmoothLowOrder}
	Let $f_0$ be an arbitrary smooth embedding of $\Domain$ into $\AmbSpace$.
	Under the conditions of \cref{prop:DerivativeExtended} the following map is well-defined and smooth:
	\begin{equation*}
		\RemainderOp[s - \diffoffset]
		\colon 
		\Emb[s+\diffoffset,r][][\Domain][\AmbSpace]
		\to 
		\ContOps \pars[\big]{
			\Sobo[s-\diffoffset,r'\!\!][][\Domain][\AmbSpace]
			;
			\Lebesgue[r'\!][\Measure[f_0]][\Domain \times \Domain][\AmbSpace]
		}
		,
		\quad
		f \mapsto \pars[\big]{w \mapsto \RemainderOp[s - \diffoffset][f] w}.
	\end{equation*}
\end{lemma}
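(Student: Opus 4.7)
The plan is to mirror the proofs of \cref{lem:RfsIsBounded} and \cref{lem:RfsIsSmooth}: both concern the same operator $\RemainderOp[s]$ but in the high-regularity regime, and their structure transfers directly. The essential new difficulty is that the proof of \cref{lem:RfsIsBounded} controls the cross term in the graph-patch decomposition by factoring out $\norm{D u}_{\Lebesgue[\infty]}$ via Morrey's inequality, and that route is blocked here because $\Sobo[s-\diffoffset,r'\!\!][]$ does not in general embed into $\Holder[1][]$. The saving grace will be to push the extra regularity of $f$ (encoded by $\diffoffset$) into a strictly positive Hölder exponent on $\remainderOp[s-\diffoffset] h$, thereby replacing the unavailable $\Lebesgue[\infty]$-bound on $Dw$ by an integrable singularity in the $y$-variable.

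First I would prove boundedness. By covariance of $\RemainderOp[s-\diffoffset]$ it suffices to argue in a graph patch $f(x) = (x,h(x))$ with $h \in \Sobo[s+\diffoffset,r][][V][\NormalSpace]$, $V \subset \DomSpace$ a small open ball, $h(0)=0$, $Dh(0)=0$, and $\norm{Dh}_{\Lebesgue[\infty][][V]} \leq 1/2$. Applying \cref{eq:MagicFormula} with $f_0(x) = (x,0)$ yields, in direct analogy with \cref{eq:RsfInGraphPatch},
\begin{equation*}
    \RemainderOp[s-\diffoffset][f] w
    =
    \pars[\big]{1+\abs{\deltaOp[1] h}^2}^{-(s-\diffoffset)/2}
    \pars[\Big]{
        \remainderOp[s-\diffoffset] w
        -
        \pars[\big]{M \circ \prx} \, \remainderOp[s-\diffoffset] h
    }
    ,
\end{equation*}
where $M \ceq Dw \, (\id + Dh^\transp Dh)^{-1} Dh^\transp$. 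The first summand is dispatched by \cref{lem:EuclideanRsIsBounded} with $p = r'$, giving $\norm{\remainderOp[s-\diffoffset] w}_{\Lebesgue[r'\!][][V \times V](\mu)} \leq C \norm{w}_{\Sobo[s-\diffoffset,r'\!][][V]}$. For the cross term I would invoke \cref{eq:MorreyInequality} applied to $h$, which yields $Dh \in \Holder[0,\alpha]$ with $\alpha \ceq s + \diffoffset - \DomDim/r - 1 \in \intervaloo{0,1}$, and then use the Taylor remainder estimate to obtain the pointwise bound
\begin{equation*}
    \abs{\remainderOp[s-\diffoffset] h(x,y)}
    \leq
    C \, \seminorm{Dh}_{\Holder[0,\alpha]} \, \abs{y-x}^{2 \diffoffset - \DomDim/r}
    ,
\end{equation*}
whose exponent is strictly positive by virtue of $\diffoffset \geq \DomDim/r$. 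Hence $\int_V \abs{\remainderOp[s-\diffoffset] h(x,y)}^{r'} \abs{y-x}^{-\DomDim} \dd y$ is bounded uniformly in $x \in V$, and combining this with the obvious $\Lebesgue[\infty]$-bound on $(\id + Dh^\transp Dh)^{-1} Dh^\transp$ produces $\norm{(M \circ \prx) \, \remainderOp[s-\diffoffset] h}_{\Lebesgue[r'\!][][V \times V](\mu)} \leq C' \norm{w}_{\Sobo[s-\diffoffset,r'\!][][V]}$. Globalization then follows the away-from-the-diagonal argument at the end of the proof of \cref{lem:RfsIsBounded}.

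For smoothness I would adapt \cref{lem:RfsIsSmooth} almost verbatim. Fixing a smooth embedding $f_0$, formula \cref{eq:MagicFormula} expresses $\RemainderOp[s-\diffoffset][f] w$ as the product of $\Lambda_f^{s-\diffoffset}$ with $\RemainderOp[s-\diffoffset][f_0] w - \DOp[f] w \cdot \RemainderOp[s-\diffoffset][f_0] f$. Smoothness of $f \mapsto \Lambda_f^{s-\diffoffset}$ (with values in $\Lebesgue[\infty]$) and of $f \mapsto \DOp[f]$ (via \cref{eq:FortunateIdentity} and smoothness of $A \mapsto A\pinv$ on rank-$\DomDim$ matrices) is carried out exactly as in \cref{lem:RfsIsSmooth}, the only modification being that $\DOp[f]$ now takes values in $\ContOps(\Sobo[s-\diffoffset,r'\!][][\Domain][\AmbSpace]; \Lebesgue[r'\!][][\Domain][\Hom(\AmbSpace;\AmbSpace)])$ rather than into $\Lebesgue[\infty]$. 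Each summand appearing in a higher Fréchet derivative couples a $\Lebesgue[r'\!]$-factor coming from a variation of $w$ or $\DOp[f]$ with a positive-exponent Hölder factor coming from a variation of $f$, and is therefore controlled by exactly the same Taylor/Hölder estimate as in the boundedness step. The main obstacle throughout is precisely this cross term: the standing assumption $\diffoffset \geq \DomDim/r$ is what produces the positive exponent $2\diffoffset - \DomDim/r$ needed to convert the absent $\Lebesgue[\infty]$-bound on $Dw$ into an integrable singularity near the diagonal, and verifying that this single trick propagates cleanly through every term of the differential is what makes the proof technical rather than routine.
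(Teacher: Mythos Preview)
Your approach is correct but genuinely different from the paper's in the key step. Both proofs reduce to a graph patch and split off the easy term $\remainderOp[s-\diffoffset] w$ via \cref{lem:EuclideanRsIsBounded}; the divergence is in how the cross term $\abs{Dw \circ \prx}\,\abs{\remainderOp[s-\diffoffset] h}$ is controlled. You push the extra regularity of $h$ all the way to a pointwise H\"older bound via Morrey, $\abs{\remainderOp[s-\diffoffset] h(x,y)} \leq C\,[Dh]_{\Holder[0,\alpha]}\,\abs{y-x}^{2\diffoffset - \DomDim/r}$, then integrate out $y$ and are left with only $\norm{Dw}_{\Lebesgue[r']}$. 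The paper instead writes $\abs{Dw(x)}\,\abs{\remainderOp[s-\diffoffset]h(x,y)} = \bigl(\abs{Dw(x)}\,\abs{y-x}^{\diffoffset}\bigr)\cdot\abs{\remainderOp[s]h(x,y)}$, applies H\"older with exponents $(\varrho,\tau)$, and then invokes the Sobolev embedding $\Sobo[s-\diffoffset-1,r'\!]\hookrightarrow\Lebesgue[\varrho]$ for $Dw$ together with $\Sobo[s+\diffoffset,r]\hookrightarrow\Sobo[s,\tau]$ for $h$.

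Your route is more elementary---it avoids the H\"older splitting and the auxiliary exponents $\varrho,\tau$---and it places the burden on the factor that has regularity to spare ($h$, coming from $f\in\Sobo[s+\diffoffset,r]$) rather than on the low-regularity factor $w$. The paper's route is sharper in its use of $h$ (only an $\Lebesgue[\tau]$-bound on $\remainderOp[s]h$, not a pointwise one), which would matter if one wanted to relax the assumption on $f$, but under the stated hypotheses both arguments go through. For smoothness, both proofs recurse on \cref{eq:DRsf} and handle the resulting products by the same mechanism as in the boundedness step; your remark that the positive exponent $2\diffoffset-\DomDim/r$ is what makes this propagate is exactly the point.
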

\begin{proof}
	The proof is similar to the one of \cref{lem:RfsIsBounded}:
	We consider a local graph patch $f(x) = (x,h(x))$,
	where
	$h \in \Sobo[s+\diffoffset,r][][V][\NormalSpace]$ 
	satisfies $\norm{D h}_{\Lebesgue[\infty]} \leq 1/2$ and $V \subset \DomSpace$ is a ball of finite radius.
	For $u = w$ we obtain the following pointwise inequality from \cref{eq:RsfInGraphPatch}:
	\begin{equation*}
		\abs{\RemainderOp[s-\diffoffset][f] w(x,y)}
		\leq
		C
		\pars[\big]{
			\abs{\remainderOp[s-\diffoffset] w(x,y)}
			+
			\abs{ D w (x)}
			\,
			\abs{\remainderOp[s-\diffoffset] h(x,y)}
		}
		.
	\end{equation*}
	This implies the following norm bound (note that $\Measure[f]$ and $\Measure$ control each other):
	\begin{equation*}
		\norm{\RemainderOp[s-\diffoffset][f] w}_{\Lebesgue[r'\!][][V\times V](\Measure[f])}
		\leq
		C \norm{\remainderOp[s-\diffoffset] w}_{\Lebesgue[r'\!][][V\times V](\Measure)}
			+
		C \norm[\big]{
			\abs{ D w \circ \prx}
			\,
			\abs{\remainderOp[s-\diffoffset] h}
		}_{\Lebesgue[r'\!][][V\times V](\Measure)}
		.
	\end{equation*}
	Since $s - \diffoffset > 0$, we get \cref{lem:EuclideanRsIsBounded} that
	\[ 
		\norm{\remainderOp[s-\diffoffset] w}_{\Lebesgue[r'\!][][V\times V](\Measure)} 
		\leq 
		C \norm{w}_{\Sobo[s-\diffoffset,r'][][V]}.
	\]
	Now we have to bound the $\Lebesgue[r'][\mu][V\times V]$-norm of
	$\abs{ D w \circ \pi_1 }\abs{\remainderOp[s-\diffoffset] h}$.
	In contrast to our approach in \cref{lem:RfsIsBounded}, 
	we cannot exploit here anymore that $D w$ is bounded as the regularity of $w$ is rather low:
	\[
		(s-\diffoffset-1) - \fdfrac{\DomDim}{r'} \leq s - 1 - \fdfrac{\DomDim}{r} - \fdfrac{\DomDim}{r'} = s - 1 - \DomDim < s - 2 < 0.
	\]
	Instead of the Morrey embedding, we may still use the Sobolev embedding (see \cref{eq:SobolevInequality})
	$\Sobo[s-\diffoffset -1,r'\!][][V] \hookrightarrow \Lebesgue[\varrho][][V]$,
	where $\varrho$ solves the equation 
	\[
		0 - \fdfrac{\DomDim}{\varrho} = (s-\diffoffset-1) - \fdfrac{\DomDim}{r'}.
	\]
	Notice that
	\[
		- \fdfrac{\DomDim}{\varrho} 
		=  
		(s-\diffoffset-1) - \fdfrac{\DomDim}{r'}
		>
		s - \fdfrac{\DomDim}{p} - 1 - \fdfrac{\DomDim}{r'}
		>
		0 - \fdfrac{\DomDim}{r'} \geq - \DomDim
	\]
	implies $\varrho \geq 1$, so it is a valid exponent for a Lebesgue space.
	Next we apply the Hölder inequality
	\[
		\norm{\varphi \, \psi}_{\Lebesgue[r'][][V\times V](\Measure)}
		\leq 
		\norm{\varphi}_{\Lebesgue[\varrho][][V\times V](\Measure)}
		\norm{\psi}_{\Lebesgue[\tau][][V\times V](\Measure)}
		\quad \text{for}\quad
		\fdfrac{1}{r'}  = \fdfrac{1}{\varrho} + \fdfrac{1}{\tau}
	\]
	to the functions 
	$\varphi \ceq \abs{ D w \circ \prx } \abs{\pry- \prx}^\diffoffset$
	and
	$\psi \ceq \abs{\remainderOp[s] h}$.
	Note that we indeed have $\varphi \, \psi = \abs{ D w \circ \pi_1 }\abs{\remainderOp[s-\diffoffset] h}$.
	Also, observe that
	\begin{equation*}
		\norm{
			\varphi
		}_{\Lebesgue[\varrho][][V\times V](\Measure)}^\varrho
		=
		\textstyle
		\int_V 
			\abs{ D w(x)}^\varrho 
			\,
			\varPsi(x)
		\dd x
		\qwith
		\varPsi(x)
		\ceq 
		\textstyle
		\int_V 
			\abs{y-x}^{\varrho \, \diffoffset -\DomDim}
		\dd y
		.
	\end{equation*}
	Since $\DomDim - \varrho \, \diffoffset < \DomDim$ and because $V$ is bounded, the function $\varPsi$ is bounded, and we obtain
	\begin{align*}
		\norm{
			\varphi
		}_{\Lebesgue[\varrho][][V\times V](\Measure)}^{\phantom{\varrho}}
		\leq
		\norm{D w}_{\Lebesgue[\varrho][][V](\Measure)}^{\phantom{\varrho}}
		\norm{\varPsi}_{\Lebesgue[\infty][][V]}^{1/\varrho}
		\leq
		\norm{w}_{\Sobo[s-\diffoffset,r'][]}^{\phantom{\varrho}}
		\norm{\varPsi}_{\Lebesgue[\infty][][V]}^{1/\varrho}
		.
	\end{align*}
	So, everything hinges now on a bound for $\norm{\psi}_{\Lebesgue[\tau][][V\times V](\Measure)}$.
	Because of $s-\diffoffset > 1$, we can use \cref{lem:EuclideanRsIsBounded} again:
	\[ 
		\norm{\psi}_{\Lebesgue[\tau][][V\times V](\mu)}
		=
		\norm{\remainderOp[s-\diffoffset] h}_{\Lebesgue[\tau][][V\times V](\mu)}
		\leq 
		\norm{h}_{\Sobo[s-\diffoffset,\tau][][V]}
		.
	\]
	By assumption, we have $\diffoffset \geq \DomDim/r$, hence
	\[
		s + \diffoffset - \fdfrac{\DomDim}{r} \geq s > s - \fdfrac{\DomDim}{\tau}.
	\]
	Thus, there is an embedding $\Sobo[s+\diffoffset,r] \hookrightarrow \Sobo[s,\tau]$,
	showing that 
	$\norm{\psi}_{\Lebesgue[\tau][][V\times V](\mu)}
	\leq 
	C
	\norm{h}_{\Sobo[s+\diffoffset,r][][V]}$.
	All together, we obtain the bound
	\begin{equation*}
		\norm{\RemainderOp[s-\diffoffset][f] w}_{\Lebesgue[r'\!][][V\times V](\mu)}
		\leq 
		C
		\pars[\Big]{
			1			
			+
			\norm{\varPsi}_{\Lebesgue[\infty][][V]}
			\norm{h}_{\Sobo[s+\diffoffset,r][][V]}
		}
		\norm{w}_{\Sobo[s-\diffoffset,r'\!][][V]}
		.
	\end{equation*}
	From here we can proceed as in the proof of \cref{lem:RfsIsBounded} to get the global bound.
	Smoothness of $f \mapsto \RemainderOp[s-\diffoffset][f]$ can be shown by using \cref{eq:DRsf} with $u = w$ recursively, employing the same technique we developed here to handle products of $\DOp[f] w \circ \prx$ with $\RemainderOp[s-\diffoffset][f] f$ or $\RemainderOp[s-\diffoffset][f] v$, $v \in \Sobo[s+\diffoffset,r]$.
\end{proof}

\subsubsection{Construction of the metric}
\label{sec:ConstructionMetric}

Recall that \cref{prop:DerivativeExtended} basically tells us that applying $D \Energy_p$ to $f$ causes the loss of $2 \, s$ derivatives.
Thus, if we want $\smash{v(f) \ceq - \Riesz_f^{-1} D \Energy_p(f)}$ to have the same regularity as $f$, then $\smash{\Riesz_f^{-1}}$ has to restore $2 \, s$ derivatives. 
So a natural choice for $\Riesz_f$ would be the fractional Laplacian $(-\Delta_f)^s$, where $\Delta_f$ is the Laplace--Beltrami operator of $f$.
Albeit the discretization of $\Delta_f$ being a sparse matrix, the fractional Laplacian $(-\Delta_f)^s$ would lead to a dense matrix (due to its nonlocal nature). 
This makes $(-\Delta_f)^s$ prohibitively expensive in practice. 
Fortunately, $\Riesz_f$ is only meant as a preconditioner, which gives us some leeway.
On $\DomSpace$ and for $1 < s < 2$  there is a constant $C(s,\DomDim) >0$  such that
the weak form of the fractional Laplacian can be written in Gagliardo form:
\begin{align}
	\begin{split}
	\int_\DomSpace \inner[\big]{ (- \Laplacian)^s u (x),w(x)} \dd x
	&=
	\int_\DomSpace \inner[\big]{ (- \Laplacian)^{s-2} Du (x),Dw(x)} \dd x
	\\
	&=
	C(s,\DomDim) \!
	\int_\DomSpace\!\int_\DomSpace \inner*{ \deltaOp[s] u, \deltaOp[s] w }
	\dd \mu,
	\end{split}
	\label{eq:FractionalLaplacianDomSpace}
\end{align}
where we abbreviate $\deltaOp[s] \ceq \deltaOp[s-1] D$ (see \cref{eq:deltaOpAndmu}).
This can be seen via the Plancherel identity for the Fourier transform; one merely needs to mimic the procedure in \cite[Proposition~3.4]{hitchhiker}.
Motivated by this, we use the following bilinear form as main building block of our metric $\Metric_f$:
\begin{equation}
	\cA_f(u,w)
	\ceq
	\int_\Domain \! \int_\Domain
		\inner[\big]{ \DeltaOp[s][f] u, \DeltaOp[s][f] w } 
	\dd \Measure[f] 
	\qwhere
	\DeltaOp[s][f] \ceq \DeltaOp[s-1][f] \DOp[f]
	.
	\label{eq:A}
\end{equation}
 To make the metric $\Metric_f$ a bit more aware of the tangent-point energy, we also include~$\cB_2(f)$:
 \begin{equation}
	\Metric_f(u,w)
	\ceq 
	\cA_f(u,w)
	+
	\cB_2(f)(u,w)
	.
	\label{eq:Metric}
\end{equation}
In our experiments it turned out that adding $\cB_2(f)$ greatly boosts performance. 
As a side note, we built $\Metric_f$ from covariant operators, thus it is covariant itself, i.e., we have 
\begin{equation}
	\Metric_{(f \circ \varphi)}( u \circ \varphi, w \circ \varphi)
	=
	\Metric_{f}( u, w)
	\quad 
	\text{for all diffeomorphisms $\varphi \colon \Domain \to \Domain$.}
	\label{eq:MetricIsCovariant}
\end{equation}
This also implies that the direction of steepest descent is covariant in the sense of \ref{item:DesirableCovariance}.

\subsubsection{Nondegeneracity of the metric}
\label{sec:NondegeneracityMetric}

Next we show that $\Metric_f$ leads to a well-defined fractional differential operator of order $2\,s$ and that it depends smoothly on $f$.

\begin{proposition}\label{prop:welldef}
Assume the conditions of \cref{prop:DerivativeExtended}.
Then for $f\in \Emb[s+\diffoffset,r][][\Domain][\AmbSpace]$ the bilinear form $\Metric_f$ induces the following bounded linear operator ``of order'' $2\,s$:
\begin{align}
	\Riesz_f 
	\colon 
	\Sobo[s+\diffoffset,r][][\Domain][\AmbSpace] 
	\to 
	\Sobo[-s+\diffoffset,r][][\Domain][\AmbSpace],
	\quad 
	u\mapsto \pars[\big]{w\mapsto \Metric_f(u,w)}.
	\label{eq:RieszOperator}
\end{align}
Moreover, the map 
\begin{equation*}
	\Riesz \colon 
	\Emb[s+\diffoffset,r][][\Domain][\AmbSpace] 
	\to
	\ContOps \pars[\big]{
		\Sobo[s+\diffoffset,r][][\Domain][\AmbSpace] 
		;
		\Sobo[\diffoffset-s,r][][\Domain][\AmbSpace]
	},
	\quad 
	f \mapsto \Riesz_f
\end{equation*}
is $\ceil{p-1}$-times continuously differentiable (and smooth if $p$ is an even integer).
\end{proposition}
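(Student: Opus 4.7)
The plan is to split $\Metric_f = \cA_f + \cB_2(f)$ and treat the two summands separately. The bilinear form $\cB_2(f)$ is already handled: \cref{prop:BilinearFormsExtended} tells us that as a function of $f$, it defines a bounded bilinear map $\Sobo[s+\diffoffset,r] \times \Sobo[s-\diffoffset,r'] \to \R$ that is $\ceil{p-1}$-times continuously differentiable on $\Emb[s+\diffoffset,r]$. So we only have to perform the analysis for $\cA_f$ and then combine. The desired operator $\Riesz_f$ then arises via the canonical isometric identification of bounded bilinear forms on $\Sobo[s+\diffoffset,r] \times \Sobo[s-\diffoffset,r']$ with bounded linear operators $\Sobo[s+\diffoffset,r] \to (\Sobo[s-\diffoffset,r'])\dual = \Sobo[\diffoffset-s,r]$; under this identification, smoothness in $f$ transfers from the bilinear form to the operator.

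For the boundedness of $\cA_f$, the key observation is that we can redistribute the singularity asymmetrically in $u$ and $w$. The denominator $\abs{f(y)-f(x)}^{-2(s-1)}$ appearing in $\cA_f$ can be split as $\abs{f(y)-f(x)}^{-(s-1+\diffoffset)} \abs{f(y)-f(x)}^{-(s-1-\diffoffset)}$, so that
\begin{equation*}
    \cA_f(u,w) = \int_\Domain \! \int_\Domain \inner[\big]{ \DeltaOp[s-1+\diffoffset][f] \DOp[f] u, \, \DeltaOp[s-1-\diffoffset][f] \DOp[f] w} \dd \Measure[f].
\end{equation*}
Applying Hölder's inequality with exponents $r$ and $r'$ gives
\begin{equation*}
    \abs{\cA_f(u,w)} \leq \seminorm{\DOp[f] u}_{\Sobo[s-1+\diffoffset,r](f)} \seminorm{\DOp[f] w}_{\Sobo[s-1-\diffoffset,r'](f)}.
\end{equation*}
The inner exponents lie in $(0,1)$: because $s-1 = 1-\DomDim/p$ and $\DomDim/r \leq \diffoffset < \DomDim/p$ and $p > 2\DomDim$, one checks $s-1+\diffoffset < 1$ and $s-1-\diffoffset > 1 - 2\DomDim/p > 0$. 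Hence the two seminorms are part of $\norm{u}_{\Sobo[s+\diffoffset,r](f)}$ and $\norm{w}_{\Sobo[s-\diffoffset,r'](f)}$ respectively, which together with \cref{lem:equivalence} yields the required bound
\begin{equation*}
    \abs{\cA_f(u,w)} \leq C(f) \, \norm{u}_{\Sobo[s+\diffoffset,r]} \norm{w}_{\Sobo[s-\diffoffset,r']}.
\end{equation*}

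For smoothness of $f \mapsto \cA_f$, I would factor out all dependence on $f$ against a fixed smooth reference embedding $f_0$, exactly as in the proof of \cref{prop:FrechetDerivative}. After such a rewrite, the integrand becomes a multilinear expression in the quantities $\DOp[f] u$, $\DOp[f] w$, $\varLambda_f^\beta$, and $J_f$, integrated against the fixed measure $\Measure[f_0]$. Each of these building blocks is smooth in $f$: smoothness of $f \mapsto \DOp[f]$ and of $f \mapsto \varLambda_f^\beta$ (into $\Lebesgue[\infty]$) is \cref{lem:RfsIsSmooth}, and $f \mapsto J_f$ was shown to be smooth in the proof of \cref{prop:FrechetDerivative}. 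A product-rule argument (together with the Hölder bookkeeping from the previous paragraph, applied uniformly on a neighbourhood of $f$) then yields that $f \mapsto \cA_f$ is smooth as a map into continuous bilinear forms on $\Sobo[s+\diffoffset,r] \times \Sobo[s-\diffoffset,r']$. Combining with the $\ceil{p-1}$-times differentiability of $\cB_2$ from \cref{prop:BilinearFormsExtended}, the claim on $\Riesz$ follows.

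The main obstacle is the bookkeeping in the boundedness step: once the right asymmetric splitting of the exponent is identified and the range conditions $0 < s-1\pm\diffoffset < 1$ are verified using the three inequalities $p > 2\DomDim$, $\diffoffset \geq \DomDim/r$, and $\diffoffset < \DomDim/p$, the rest is a matter of cleanly invoking smoothness of each building block already established in \cref{sec:TPE}. No new analytic machinery beyond \cref{lem:RfsIsSmooth}, \cref{prop:BilinearFormsExtended}, and standard \SoboSlobo embeddings is needed.
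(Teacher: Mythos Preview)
Your proposal is correct and follows essentially the same route as the paper: split $\Metric_f = \cA_f + \cB_2(f)$, cite \cref{prop:BilinearFormsExtended} for $\cB_2$, and for $\cA_f$ redistribute the exponent asymmetrically as $(s-1+\diffoffset)+(s-1-\diffoffset)$ before applying H\"older with exponents $r,r'$. The paper writes this more compactly as $\DeltaOp[s+\diffoffset][f] u$ and $\DeltaOp[s-\diffoffset][f] w$ (recall $\DeltaOp[s][f] \ceq \DeltaOp[s-1][f]\DOp[f]$), but this is purely notational. For the smoothness of $f \mapsto \cA_f$ the paper argues more tersely, appealing directly to the smooth dependence of $\DOp[f]$ on $f$ as an operator between the relevant \SoboSlobo spaces, whereas you additionally unpack the $f$-dependence of $\varLambda_f$ and $J_f$; this makes your sketch slightly more explicit about the measure and denominator, but the substance is the same.
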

\begin{proof}
	It suffices to show that there is a constant $C \geq 0$ such that
	\begin{align*}
		\abs{ \Metric_f(u,w) } \leq C \norm{u}_{\Sobo[s+\diffoffset,r](f)} \norm{w}_{\Sobo[s-\diffoffset,r'\!](f)}
	\end{align*}
	holds true for all 
			$u\in \Sobo[s+\diffoffset,r][][\Domain][\AmbSpace]$ 
			and 
			$w\in\Sobo[s-\diffoffset,r'\!][][\Domain][\AmbSpace]$.
	We have done this already for the lower order term $\cB_2(f)$ in \cref{prop:BilinearFormsExtended}, so let us focus here on $\cA_f$.
	Because of $s+\diffoffset - \DomDim/r > 1$, with $H \ceq \Hom\pars{\AmbSpace;\AmbSpace}$ we have
	\begin{align*}
		\DOp[f] f, \DOp[f]u \in \Sobo[s+\diffoffset-1,r ][][\Domain][H],
		\qand
		\DOp[f]w \in \Sobo[s-\diffoffset-1,r'\!][][\Domain][H]
		.
	\end{align*}
	Since $s-\diffoffset > 1$, $\DOp[f]u$ can absorb some of the fractional derivatives that $\DOp[f]w$ cannot digest:
	\begin{align*}
		\abs*{
			\int_{\Domain} \! \int_{\Domain}
			\inner[\big]{ \DeltaOp[s][f] u, \DeltaOp[s][f] w } 
		\dd \Measure[f] 
		}
		&=
		\abs*{
			\int_{\Domain} \! \int_{\Domain}
				\inner[\big]{ \DeltaOp[s+\diffoffset][f] u, \DeltaOp[s-\diffoffset][f] w } 
			\dd \Measure[f] 
		}
		\\
		&\leq 
		\norm{\DeltaOp[s{+}\diffoffset][f] u}_{\Lebesgue[r](\Measure[f])}
		\norm{ \DeltaOp[s{-}\diffoffset][f] w}_{\Lebesgue[r'\!](\Measure[f])}
		.
	\end{align*}
	Finally, $\Riesz$ is smooth because the following maps are smooth:
	\begin{align*}
		\DOp 
		\colon 
		\Sobo[s+\diffoffset,r][][\Domain][\AmbSpace] 
		&\to 
		\ContOps \pars[\big]{
			\Sobo[s+\diffoffset,r][][\Domain][\AmbSpace]
			;
			\Sobo[s+\diffoffset-1,r][][\Domain][\TargetSpace]
		}
		\quad \text{and}
		\\
		\DOp 
		\colon 
		\Sobo[s+\diffoffset,r][][\Domain][\AmbSpace] 
		&\to 
		\ContOps \pars[\big]{
			\Sobo[s-\diffoffset,r'\!\!][][\Domain][\AmbSpace]
			;
			\Sobo[s-\diffoffset-1,r'\!\!][][\Domain][\TargetSpace]
		}
		.
	\end{align*}
	This can be shown by using
	$\DOp[f] u = \pars{ \DOp[f_0] u } \pars{ \DOp[f_0] f }\pinv$ from \cref{eq:FortunateIdentity} and 
	by employing the smoothness of the pseudoinverse on the space of $\AmbSpace \times \AmbSpace$-matrices of rank~$\DomDim$.
\end{proof}

Finally, we show (or at least sketch) that the operator $\Riesz_f$ is elliptic, i.e., it can ``repair'' the loss of $2 \, s$ orders of differentiability.
Strictly speaking, $\Riesz_f$ is not invertible, but this can easily be facilitated by imposing suitable constraints. Alternatively, one may add a small multiple of $\inner{\cdot,\cdot}_{\Lebesgue[2](f)}$ to $\Metric_f$.
Together with \cref{prop:BilinearFormsExtended}, this shows that \ref{item:DesirableLipschitz} and \ref{item:DesirableRegularity} are satisfied.

\begin{proposition}[Ellipticity]\label{prop:LpEllipticity}
	For every $f \in \Sobo[s+\diffoffset,r][][\Domain][\AmbSpace]$
	there is a $0 < C < \infty$ such that
	\begin{align}
		\norm{u}_{\Sobo[s+\diffoffset,r](f)}
		\leq 
		C \norm{\Riesz_f u}_{\Sobo[-s+\diffoffset,r](f)}
		\quad 
		\text{for all $u \in \Sobo[s+\diffoffset,r][][\Domain][\AmbSpace]$}
		.
		\label{eq:EllipticEstimate}
	\end{align}
	The operator 
	$
		\Riesz_f 
		\colon 
		\Sobo[s+\diffoffset,r][][\Domain][\AmbSpace] 
		\to 
		\Sobo[-s+\diffoffset,r][][\Domain][\AmbSpace]
	$
	is a Fredholm operator of index $0$.
	The null space of $\Riesz_f$ consists of the constant functions.
\end{proposition}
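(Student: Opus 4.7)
The plan is to establish the three claims in turn: characterize $\ker(\Riesz_f)$ directly from the non-negativity of $\Metric_f$, prove the elliptic estimate \cref{eq:EllipticEstimate} by localization and comparison with the Euclidean fractional Laplacian, and finally deduce Fredholmness of index zero from closed range together with the symmetry of $\Metric_f$.

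Because $r > 2 \, \DomDim \geq 2$, the continuous embedding $\Sobo[s+\diffoffset,r] \embeds \Sobo[s-\diffoffset,r'\!]$ holds, so $\Metric_f(u,u) = \cA_f(u,u) + \cB_2(f)(u,u)$ is well-defined and non-negative for every $u \in \Sobo[s+\diffoffset,r]$. Constants clearly lie in $\ker(\Riesz_f)$. Conversely, if $\Riesz_f u = 0$ then $\cA_f(u,u) = 0$, which forces $\DeltaOp[s][f] u \equiv 0$ and hence $\DOp[f] u \equiv A$ constant; equivalently $\dd u = A \, \dd f$, and connectedness of $\Domain$ gives $u = A \, f + c$. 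Plugging back into $\cB_2(f)(u,u) = 0$ and noting that $\abs{\DeltaOp[1][f] u}^2$ then equals $\abs{A \pars{f(y) - f(x)}}^2 / \abs{f(y) - f(x)}^2$, this factor must vanish on the open set $W \ceq \myset{(x,y) \in \Domain \times \Domain}{\RemainderOp[s][f] f(x,y) \neq 0}$, which is dense in $\Domain \times \Domain$ because the second fundamental form of a closed curved submanifold does not vanish identically. By continuity, $A \pars{f(y) - f(x)} = 0$ for all $x, y \in \Domain$, so $A \, f$ is constant and therefore $u$ is constant.

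For the elliptic estimate I would argue by localization, adapting the strategy from the proof of \cref{lem:RfsIsBounded}. Using the covariance \cref{eq:MetricIsCovariant} and a finite cover of $\Domain$ by graph patches $f(x) = (x, h(x))$ with $\norm{Dh}_{\Lebesgue[\infty]} \leq 1/2$, the identities \cref{eq:FortunateIdentity} and \cref{eq:MagicFormula} let me expand $\cA_f$ locally as a perturbation of the Euclidean Gagliardo form associated to $\deltaOp[s]$. By \cref{eq:FractionalLaplacianDomSpace} and Plancherel, this Gagliardo form corresponds to the fractional Laplacian $(-\Laplacian)^s$, a classical elliptic pseudodifferential operator of order $2 \, s$ whose $L^r$-theory provides isomorphisms $\Sobo[s+\diffoffset,r] / \braces{\mathrm{const}} \to \Sobo[-s+\diffoffset,r] / \braces{\mathrm{const}}$ on the closed manifold. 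Gluing these local bounds with a smooth partition of unity and absorbing the perturbation (the $h$-dependent factor, the $\cB_2(f)$-contribution, and cross-patch remainders) via the compact embedding $\Sobo[s+\diffoffset,r] \embeds \Sobo[s+\diffoffset-\eta,r]$ for small $\eta > 0$ yields \cref{eq:EllipticEstimate}.

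Once \cref{eq:EllipticEstimate} is available, $\ker(\Riesz_f)$ is finite-dimensional and $\Riesz_f$ has closed range by Peetre's lemma. For index zero I exploit the symmetry $\Metric_f(u,w) = \Metric_f(w,u)$: the Banach-space adjoint of $\Riesz_f \colon \Sobo[s+\diffoffset,r] \to \Sobo[-s+\diffoffset,r]$ is, under the standard dual pairings, the analogue of $\Riesz_f$ on the swapped pairing $\Sobo[s-\diffoffset,r'\!] \to \Sobo[-s-\diffoffset,r'\!]$; the same localization argument furnishes it with an elliptic estimate, and its kernel is likewise characterized as constants. Hence $\dim \mathrm{coker}(\Riesz_f) = \dim \ker(\Riesz_f)$. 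The main obstacle will be the $L^r$-level fractional ellipticity in the preceding paragraph: $\cA_f$ is Hilbertian in nature, so Lax--Milgram directly gives only the $L^2$-version of \cref{eq:EllipticEstimate}, and the passage to general $r$ at fractional order $2 \, s$ demands the full Calder\'on--Zygmund / Fourier-multiplier theory for $(-\Laplacian)^s$ on the closed manifold, with careful tracking of how the $h$-dependent coefficients perturb the symbol.
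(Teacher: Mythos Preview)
Your outline follows the paper's strategy: localize to graph patches $f(x)=(x,h(x))$, compare $\cA_f$ with the Euclidean Gagliardo form of $(-\Laplacian)^s$ via \cref{eq:FractionalLaplacianDomSpace}, and deduce index~$0$ from closed range together with the symmetry of~$\Metric_f$. One difference worth noting is how the $h$-dependent perturbation of $\cA_f$ is handled. The paper does \emph{not} absorb it via a compact embedding; instead it freezes coefficients: since $Dh(0)=0$, the patch $V$ can be shrunk until the $f$- and $f_0$-forms differ in operator norm by at most~$\varepsilon$ on $V\times V$, so the local variable-coefficient operator is a Neumann-series perturbation of the constant-coefficient one. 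Your compact-absorption route only produces a G\aa rding-type estimate with a lower-order remainder; that suffices for Fredholmness, but the pure inequality \cref{eq:EllipticEstimate} then still requires quotienting out the kernel. Either route works; the paper's is the more direct path to the estimate as stated.

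Your kernel argument has a gap. The claim that $W=\{\RemainderOp[s][f] f\neq 0\}$ is dense because ``the second fundamental form does not vanish identically'' is unjustified: non-identical vanishing does not yield density, and a $\Holder[1,\alpha]$ submanifold may well contain open flat pieces, on which $\RemainderOp[s][f] f$ vanishes. A cleaner route avoids $\cB_2$ at this stage: once $\DOp[f] u\equiv A$, note that $\DOp[f] u(x)=\dd u(x)\,\dd f(x)\pinv$ annihilates the normal space $(T_{f(x)}\varSigma)^\perp$ for every~$x$, so $A$ vanishes on $\bigcup_x (T_{f(x)}\varSigma)^\perp$. For a closed compact submanifold these normal spaces span~$\AmbSpace$ (any $v$ tangent to $\varSigma$ at every point would make $\xi\mapsto\inner{\xi,v}$ a function on the compact $\varSigma$ without critical points). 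Hence $A=0$ and $u$ is locally constant; the nonlocality of $\Metric_f$ then forces global constancy on disconnected~$\Domain$, which is what the paper alludes to.
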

\begin{proof}
	The range of $\Riesz_f$ is closed due to the ``elliptic Schauder estimate'' \cref{eq:EllipticEstimate}. 
	Because the operator is nonlocal, only globally constant functions are in the null space. (This is in contrast to local operators like the Laplacian where the null space consists of \emph{locally} constant functions.) We leave the details to the reader.
	Because $\Metric_f$ is symmetric on a dense subset and because the range of $\Riesz_f$ is closed, $\Riesz_f$ is a Fredholm operator of index $0$.
	So the real difficulty lies in showing the Schauder estimate \cref{eq:EllipticEstimate}.
	The central idea is to employ local graphs of the form $f_0(x) = (x,0)$ and $f(x) = (x,h(x))$ with $h \in \Sobo[s,r][][\DomSpace][\NormalSpace]$ and $Dh(0) = 0$. Then we have
	\begin{align*}
		\int_\DomSpace\!\int_\DomSpace\!
			\inner[\big]{ \DeltaOp[s+\diffoffset][f_0] u, \DeltaOp[s-\diffoffset][f_0] w }
		\dd \Measure[f_0]
		&=
		\int_\DomSpace\!\int_\DomSpace\!
			\inner[\big]{ \deltaOp[s] u, \deltaOp[s] w }
		\dd \Measure
		\stackrel{\cref{eq:FractionalLaplacianDomSpace}}{=}
		C\!
		\int_\DomSpace \!\! \inner[\big]{ (- \Laplacian)^s u (x),w(x)} \dd x. 
	\end{align*}
	The second ingredient is the fact that 
	because of $Dh(0) = 0$ we may find for every $\varepsilon>0$ a~sufficiently small neighborhood $V$ around $0$ so that
	\begin{gather*}
		\abs*{
			\int_V\!\int_V\!\!
				\inner[\big]{ \DeltaOp[s+\diffoffset][f] u, \DeltaOp[s-\diffoffset][f] w }
			\dd \Measure[f_0]
			-
			\!
			\int_V\!\int_V\!\!
				\inner[\big]{ \DeltaOp[s+\diffoffset][f_0] u, \DeltaOp[s-\diffoffset][f_0] w }
			\dd \Measure[f_0]
		}
		\leq 
		\varepsilon \! \norm{u}_{\Sobo[s+\diffoffset,r][][V]} \! \norm{w}_{\Sobo[s-\diffoffset,r'\!][][V]}
		.
	\end{gather*}
	Thus, we may apply classical Schauder techniques, using the ``operator with constant coefficients''
	$\pars{\DeltaOp[s-\diffoffset][f_0]}\adj\pars{\DeltaOp[s+\diffoffset][f_0]}$ 
	as local approximation of the 
	``operator with variable coefficients'' $\pars{\DeltaOp[s-\diffoffset][f]}\adj\pars{\DeltaOp[s+\diffoffset][f]}$.
	The analysis is further complicated by the fact that we are dealing with nonlocal operators here. 
	Nonetheless, the theory can be employed because 
	the ``meat'' of these nonlocal operators always sits close to the diagonal in $\Domain \times \Domain$ and because the operators $\DeltaOp[s\pm\diffoffset]$ satisfy certain product rules.
	As carrying this out would provide little further insight,
	we resist the temptation to go into even more detail here.
\end{proof}


\section{Regularization}
\label{sec:Regularization}

From now on we focus on surfaces ($\DomDim = 2$) in $3$-dimensional Euclidean space ($\AmbDim = 3$).
We fix some $p > 2 \, \DomDim = 4$ (in practice we use $p = 6$) and abbreviate $\Energy \ceq \Energy_p$.
Suppose we are given an unknown surface $\varSigma^\dagger \subset \R^3$ and that we know a~priori that
\begin{align}
	\Energy(\varSigma^\dagger) < E \qand \varSigma^\dagger \subset \ClosedBall{0}{R}
	\label{eq:APriori}
\end{align}
for some $0 < E < \infty$ and $0 < R <\infty$.
Moreover, suppose that instead of the exact far field $F(\varSigma^\dagger) \in \Lebesgue[2][][\mathbb{S}^2][\C^{\NumWaves}]$ of $\NumWaves$ incoming plane waves, we only have access to a noisy measurement $g^\delta \in \Lebesgue[2][][\mathbb{S}^2][\C^{\NumWaves}]$ with noise level
$
\norm{ g^\delta - F(\varSigma^\dagger) } \ceq \norm{ g^\delta - F(\varSigma^\dagger) }_{\Lebesgue[2]} \leq \delta
$.
Further, we suppose that the operator $F$ is injective on the space of surfaces that satisfy \cref{eq:APriori}. 
This is not an overly bold assumption because this can be achieved, for instance, 
by using sufficiently many incident directions (see \cite[Theorem~5.2]{zbMATH06061716})
or by choosing at least one wave number $\kappa < \uppi/R$ (see \cite[Corollary~5.3]{zbMATH06061716}). 

We attempt to find an approximation $\varSigma_\alpha^\delta$ of the true solution $\varSigma^\dagger$ by picking an appropriate $\alpha > 0$ and by minimizing the following \emph{generalized Tikhonov functional}:
\begin{align}
	\label{Tik}
	\varSigma_\alpha^\delta \in \argmin( \Tikhonov_\alpha^\delta ),
	\quad
	\text{where}
	\quad
	\Tikhonov_\alpha^\delta(\varSigma)
	\ceq 
	\begin{cases}
		\tfrac{1}{2} \norm{ F(\varSigma) - g^\delta }^2 + \alpha \, \Energy(\varSigma)
		,
		& \varSigma \subset \ClosedBall{0}{R}
		,
		\\
		\infty, &\text{else.}
	\end{cases}
\end{align}
The constraint $\varSigma \subset \ClosedBall{0}{R}$ is here to employ the results from \cref{sec:PropertiesTP}.
In principle, we could impose this constraint by using $\varSigma \mapsto \Energy(\varSigma \cup S)- \Energy(S)$ as regularizer, where $S \ceq \partial \ClosedBall{0}{R}$. 
However, we did not find it necessary to include this term into our numerical experiments as we never experienced such a drift towards infinity.

Next we demonstrate that the tangent-point energy is appropriate as a stabilizing functional:
We show that minimizers of the generalized Tikhonov functional $\Tikhonov_\alpha^\delta$ exist (see \cref{thm:Existence}) and that the regularizing property \cref{thm:Regularization} is fulfilled.
Afterwards we propose an algorithm for optimizing $\Tikhonov_\alpha^\delta$.

\subsection{Existence and regularizing property}\label{sec:ExRegProp}
The following two theorems adapt classical results from regularization 
theory for operators in Banach spaces (see, e.g., \cite{Scherzer_etal:09}) to operators defined 
on shape spaces with the tangent point energy as penalty term. Here we make heavy use of the properties of this energy derived in \cref{sec:TPE}.

\begin{theorem}[Existence]\label{thm:Existence}
	Suppose that $\varSigma^\dagger \subset \AmbSpace$ is a closed $\DomDim$-dimensional $\Holder[1]$-surface 
	that satisfies the a~priori conditions \cref{eq:APriori}.
	Let $\delta > 0$ and $g^\delta \in \Lebesgue[2][][\mathbb{S}^2][\C^{\NumWaves}]$
	satisfy $\norm{ F(\varSigma^\dagger) - g^\delta }_{\Lebesgue[2]} \leq \delta$.
	\newline
	Then for every $\alpha > 0$ the generalized Tikhonov functional $\Tikhonov_\alpha^\delta$ from \cref{Tik} attains its infimum.
\end{theorem}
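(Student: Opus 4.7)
The plan is to apply the direct method of the calculus of variations, using the three structural results for $\Energy = \Energy_p$ collected in \cref{sec:PropertiesTP}: the geometric Morrey embedding (\cref{thm:GeometricMorrey}), compactness (\cref{thm:Compactness}), and lower semicontinuity (\cref{thm:SemiContinuity}).

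First I would check that the infimum is finite. Using $\varSigma^\dagger$ itself as a competitor, the a~priori bound \cref{eq:APriori} and the noise hypothesis give
\[
	\inf \Tikhonov_\alpha^\delta \leq \Tikhonov_\alpha^\delta(\varSigma^\dagger) \leq \tfrac{1}{2}\,\delta^2 + \alpha\,E < \infty.
\]
Pick a minimizing sequence $(\varSigma_k)$. After discarding finitely many terms, the values $\Tikhonov_\alpha^\delta(\varSigma_k)$ are uniformly bounded, so the infinite-penalty branch in \cref{Tik} forces $\varSigma_k \subset \ClosedBall{0}{R}$, while the nonnegativity of the data-fit term together with $\alpha > 0$ yields a uniform bound $\Energy(\varSigma_k) \leq E'$ for some $E' < \infty$.

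Next, \cref{thm:Compactness} applied with parameters $(R,E')$ produces a subsequence $(\varSigma_{k_\ell})$, a closed $n$-dimensional $\Holder[1]$-submanifold $\varSigma_\infty \subset \ClosedBall{0}{R}$, and $\Holder[1]$-diffeomorphisms $\diffeo_\ell \colon \AmbSpace \to \AmbSpace$ isotopic to $\id_\AmbSpace$ with $\diffeo_\ell(\varSigma_\infty) = \varSigma_{k_\ell}$ and $\diffeo_\ell \to \id_\AmbSpace$ in $\Holder[1]$ (in the sense of \cref{dfn:ConvergenceC1}). \cref{thm:SemiContinuity} yields $\Energy(\varSigma_\infty) \leq \liminf_{\ell\to\infty} \Energy(\varSigma_{k_\ell})$, and \cref{thm:GeometricMorrey} upgrades $\varSigma_\infty$ to $\Holder[1,\alpha]$-regularity, so in particular $\varSigma_\infty \in \dom(F)$. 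To handle the data-fit term I would set $h_\ell(x) \ceq \diffeo_\ell(x) - x$ for $x \in \varSigma_\infty$, so that $\varSigma_{k_\ell}$ is precisely the offset surface $(\varSigma_\infty)_{h_\ell}$ and $h_\ell \to 0$ in $\Holder[1][][\varSigma_\infty][\AmbSpace]$. The continuity of the local far-field operator $F_{\on{loc}}$ at the zero offset, a consequence of its analyticity recalled in \cref{sec:FrechetDerivativeForwardOperator}, then yields $F(\varSigma_{k_\ell}) \to F(\varSigma_\infty)$ in $\Lebesgue[2][][\S^2][\C^\NumWaves]$. Combining these facts,
\begin{align*}
	\Tikhonov_\alpha^\delta(\varSigma_\infty)
	\leq
	\liminf_{\ell\to\infty} \Tikhonov_\alpha^\delta(\varSigma_{k_\ell})
	=
	\inf \Tikhonov_\alpha^\delta,
\end{align*}
so $\varSigma_\infty$ realizes the minimum.

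The main obstacle is reconciling the two modes of convergence at play: \cref{thm:Compactness} delivers Hausdorff convergence packaged with a family of $\Holder[1]$-close ambient diffeomorphisms, whereas the continuity of $F$ stated in \cref{sec:FrechetDerivativeForwardOperator} is formulated for $\Holder[1]$-vector fields on a \emph{fixed} base surface. The diffeomorphisms $\diffeo_\ell$ from \cref{thm:Compactness} are exactly the bridge needed to transport the sequence $(\varSigma_{k_\ell})$ onto the common base $\varSigma_\infty$ so that the known continuity results for $F_{\on{loc}}$ apply.
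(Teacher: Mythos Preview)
Your proof is correct and follows essentially the same approach as the paper: the direct method, with compactness (\cref{thm:Compactness}) to extract a $\Holder[1]$-convergent subsequence, lower semicontinuity (\cref{thm:SemiContinuity}) for the regularizer, and continuity of $F$ under $\Holder[1]$-convergence for the data term. The only minor difference is that the paper cites an external result (\cite[Lemma~4.2]{djellouli}) for the continuity of $F$, whereas you derive it from the analyticity of $F_{\on{loc}}$ stated in \cref{sec:FrechetDerivativeForwardOperator} via the offset maps $h_\ell = \diffeo_\ell - \id$; both routes are valid and yours is arguably more self-contained.
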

\begin{proof}

We employ the direct method of calculus of variations.
Note that $ \inf(\Tikhonov_\alpha^\delta) < \infty$ because there certainly exists a feasible surface with finite tangent-point energy (e.g., some $\varSigma^\dagger$).
Let $(\varSigma_k)_{k\in\N}$ be a minimizing sequence of $\Tikhonov_\alpha^\delta$ such that 
$\Tikhonov_\alpha^\delta(\varSigma_k) < \infty$ and $\Tikhonov_\alpha^\delta(\varSigma_k) \searrow \inf(\Tikhonov_\alpha^\delta)$ as $k \to \infty$.
Thus, we have
\begin{align*}
\alpha \, \Energy(\varSigma_k)
	\leq 
	\tfrac{1}{2} \norm{ g^{\delta} - F(\varSigma_k) }^2
	+
	\alpha \, \Energy(\varSigma_k)
	=
	\Tikhonov_\alpha^\delta(\varSigma_k)
	\leq 
	\Tikhonov_\alpha^\delta(\varSigma_1)
	\qec c.
\end{align*}
By the compactness result \cref{thm:Compactness}, there must be a subsequence $(\varSigma_{k_\ell})_{\ell \in \N}$ that converges to some submanifold $\varSigma$ in Hausdorff distance $\HsdD$.
By \cref{thm:SemiContinuity}, the tangent-point energy $\Energy$ is lower semicontinuous with respect to $\HsdD$, hence we have
$
	\Energy(\varSigma) \leq \liminf_{\ell \to \infty} \Energy(\varSigma_{k_\ell}).
$
\cref{thm:Compactness} also shows that $\varSigma_k \converges[k \to \infty] \varSigma$ in $\Holder[1]$, i.e.,
there are $\Holder[1]$-diffeomorphisms $\diffeo_\ell \colon \AmbSpace \to \AmbSpace$ such that $\diffeo_\ell(\varSigma) = \varSigma_{k_\ell}$ and  $\diffeo_\ell \converges[\ell \to \infty] \id_{\AmbSpace}$ in $\Holder[1]$.
The forward operator $F$ is continuous with respect to this convergence (see, e.g., \cite[Lemma~4.2]{djellouli}),
thus we get 
\begin{align*}
	\inf(\Tikhonov_\alpha^\delta)
	\leq
	\Tikhonov_\alpha^\delta(\varSigma)
	=
	\tfrac{1}{2} \norm{F(\varSigma) - g^\delta}^2
	+
	\alpha\,\Energy(\varSigma)
	&\leq 
	\lim_{\ell \to \infty}
	\tfrac{1}{2} \norm{F(\varSigma_{k_\ell}) - g^\delta}^2
	+
	\alpha\,\liminf_{\ell \to \infty}
	\Energy(\varSigma_{k_\ell})
	\\
	&=
	\liminf_{\ell \to \infty}
	\Tikhonov_\alpha^\delta(\varSigma_{k_\ell})
	=
	\inf(\Tikhonov_\alpha^\delta)
	.
\end{align*}
This shows that $\varSigma$ is a minimizer of $\Tikhonov_\alpha^\delta$ and thus completes the proof.
\end{proof}

\begin{theorem}[Regularizing property]\label{thm:Regularization}
Suppose that $F$ is injective and 
that $\varSigma^\dagger \subset \AmbSpace$ is an $\DomDim$-dimensional $\Holder[1]$-surface 
that satisfies the a~priori conditions \cref{eq:APriori}.
Let $(g^{\delta_k})_{k\in\N}$ be a sequence with $\norm{ g^{\delta_k} - F(\varSigma^\dagger) } \leq \delta_k \converges[k \to \infty] 0$. 
Choose regularization parameters $\alpha_k>0$ such that $\alpha_k \converges[k \to \infty] 0$ and $\delta_k^2 / \alpha_k \converges[k \to \infty] 0$.
Let $\varSigma_k$ be a minimizer of $\Tikhonov_{\alpha_k}^{\delta_k}$.
Then 
\begin{equation}
	\varSigma_k \converges[k \to \infty] \varSigma^\dagger 
	\;\text{in $\Holder[1]$}\;
	\quad\text{and}\quad
	F(\varSigma_k) \converges[k \to \infty] F(\varSigma^\dagger)
	.
	\label{eq:Tikhonov}
\end{equation}
\end{theorem}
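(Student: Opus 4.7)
The plan is to run the classical direct-method/subsubsequence argument for Tikhonov regularization, using exactly the three properties of $\Energy$ collected in \cref{sec:PropertiesTP}: compactness of sublevel sets (\cref{thm:Compactness}), lower semicontinuity (\cref{thm:SemiContinuity}), and the continuity of $F$ under $\Holder[1]$-convergence (already invoked in the proof of \cref{thm:Existence}). Injectivity of $F$ on the a~priori set will then identify the limit with $\varSigma^\dagger$.

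First I would test the minimizer $\varSigma_k$ against $\varSigma^\dagger$ itself, which is a feasible competitor by the a~priori assumptions \cref{eq:APriori}:
\[
    \Tikhonov_{\alpha_k}^{\delta_k}(\varSigma_k)
    \leq
    \Tikhonov_{\alpha_k}^{\delta_k}(\varSigma^\dagger)
    \leq
    \tfrac{1}{2}\delta_k^2 + \alpha_k \, \Energy(\varSigma^\dagger).
\]
From this I read off the two inequalities $\norm{F(\varSigma_k)-g^{\delta_k}}^2 \leq \delta_k^2 + 2\alpha_k \, \Energy(\varSigma^\dagger)$ and $\Energy(\varSigma_k) \leq \delta_k^2/(2\alpha_k)+\Energy(\varSigma^\dagger)$. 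The parameter choice $\alpha_k \to 0$ and $\delta_k^2/\alpha_k \to 0$ yields $F(\varSigma_k) \to F(\varSigma^\dagger)$ in $\Lebesgue[2]$ (combining the first with $\norm{g^{\delta_k}-F(\varSigma^\dagger)}\leq \delta_k$ via the triangle inequality), as well as the uniform bound $\limsup_{k\to\infty} \Energy(\varSigma_k) \leq \Energy(\varSigma^\dagger) < E$.

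Next, the built-in constraint $\varSigma_k \subset \ClosedBall{0}{R}$ in \cref{Tik} together with this uniform energy bound puts the sequence in the situation of \cref{thm:Compactness}: any subsequence of $(\varSigma_k)$ admits a further subsequence $\varSigma_{k_\ell} \to \varSigma$ in $\Holder[1]$ in the sense of \cref{dfn:ConvergenceC1}, with $\varSigma$ a closed $\DomDim$-dimensional $\Holder[1]$-submanifold contained in $\ClosedBall{0}{R}$. The lower semicontinuity statement \cref{thm:SemiContinuity} then gives $\Energy(\varSigma) \leq \liminf_\ell \Energy(\varSigma_{k_\ell}) < E$, so $\varSigma$ lies in the a~priori set on which $F$ is assumed injective. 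Continuity of $F$ with respect to $\Holder[1]$-convergence (the same input from \cite{djellouli} already used in \cref{thm:Existence}) yields $F(\varSigma_{k_\ell}) \to F(\varSigma)$; combined with the previously established $F(\varSigma_k) \to F(\varSigma^\dagger)$, this forces $F(\varSigma) = F(\varSigma^\dagger)$, and injectivity gives $\varSigma = \varSigma^\dagger$.

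A standard subsubsequence argument then promotes this to full-sequence convergence: every subsequence of $(\varSigma_k)$ has a further subsequence converging in $\Holder[1]$ to the same limit $\varSigma^\dagger$, so the whole sequence must converge there. The main subtlety I anticipate, as usual in shape-space arguments, is keeping track of what ``$\Holder[1]$-convergence'' actually means for surfaces---namely the ambient-diffeomorphism convergence packaged in \cref{dfn:ConvergenceC1}---and making sure both $\Energy$ and $F$ are well-behaved in this topology. But the former is precisely \cref{thm:SemiContinuity}, and the latter is exactly the regularity under which the forward map is known to be continuous, so the argument goes through without any genuinely new analysis.
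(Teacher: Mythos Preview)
Your argument is correct and follows essentially the same route as the paper. One small point worth noting: for the final ``subsubsequence'' step you directly claim that $\Holder[1]$-convergence of every subsubsequence to $\varSigma^\dagger$ yields $\Holder[1]$-convergence of the full sequence, but the $\Holder[1]$-convergence of \cref{dfn:ConvergenceC1} is not a~priori a metric notion. The paper handles this more explicitly: it first runs the subsubsequence argument in the Hausdorff metric $\HsdD$ to obtain $\HsdD(\varSigma_k,\varSigma^\dagger)\to 0$ for the \emph{full} sequence, and then invokes \cref{thm:Rigidity} (with the uniform energy bound $\Energy(\varSigma_k)\leq \sup_j \delta_j^2/(2\alpha_j)+\Energy(\varSigma^\dagger)<\infty$) to upgrade this to $\Holder[1]$-convergence. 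Your invocation of \cref{thm:SemiContinuity} is harmless but not needed, since the energy bound on the limit is already part of the conclusion of \cref{thm:Compactness} and the theorem assumes $F$ injective outright.
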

\begin{proof} 
	Observe that the number
	$E \ceq \sup_{k\in\N} \delta_k^2/(2\,\alpha_k) +\Energy(\varSigma^\dagger)$ 
	is finite because $\delta_k^2/\alpha_k $ converges to~$0$. 
	Since $\varSigma_k$ is a minimizer of $\Tikhonov_{\alpha_k}^{\delta_k}$, we have
	\begin{align*}
		\alpha_k \, \Energy(\varSigma_k)
		\leq 
		\Tikhonov_{\alpha_k}^{\delta_k}( \varSigma_k )
		\leq 
		\Tikhonov_{\alpha_k}^{\delta_k}( \varSigma^\dagger )
		\leq 
		\tfrac{1}{2} \, \delta_k^2
		+
		\alpha_k \, \Energy(\varSigma^\dagger)
		\leq 
		\alpha_k \, E,
		\quad
		\text{hence}
		\quad 
		\Energy(\varSigma_k) \leq E.
	\end{align*}
	Using the above inequality once more, we obtain
	\begin{align*}
		\tfrac{1}{2}
		\norm{ F(\varSigma_k)  - g^{\delta_k}}^2
		\leq 
		\Tikhonov_{\alpha_k}^{\delta_k}( \varSigma_k )
		\leq
		\alpha_k \, E,
		\quad
		\text{hence}
		\quad 
		\norm{  F(\varSigma_k) - g^{\delta_k}} \converges[k \to \infty] 0
		.
	\end{align*}
	Thus, $F(\varSigma_k)$ converges to the correct far field:
	\begin{align*}
		\norm{ F(\varSigma^\dagger) - F(\varSigma_k) }
		\leq
		\norm{ F(\varSigma^\dagger) - g^{\delta_k} }
		+
		\norm{ g^{\delta_k}- F(\varSigma_k) }
		\leq 
		\delta_k + \norm{ F(\varSigma_k) - g^{\delta_k} }
		\converges[k \to \infty] 0
		.
	\end{align*}
	Next we show that $\varSigma_k$ converges to $\varSigma^\dagger$ in the Hausdorff distance $\HsdD$.
	Since $\HsdD$ is a metric,
	this is true if and only if every subsequence $(\varSigma_{k_\ell})_{\ell \in \N}$ has a subsequence 
	that converges in $\HsdD$ to $\varSigma^\dagger$. 
	So, let $\varGamma_\ell \ceq \varSigma_{k_\ell}$, $\ell \in \N$ be an arbitrary subsequence. 
	Using the compactness result \cref{thm:Compactness}, we find a subsequence $(\varGamma_{\ell_i})_{i \in \N}$ that converges in $\HsdD$ and in $\Holder[1]$ to some surface~$\varGamma$.  
	Hence, $F(\varGamma_{\ell_i})$ converges to $F(\varGamma)$ (again, by virtue of \cite[Lemma 4.2]{djellouli}). 
	But we already know that $F(\varSigma_k)$ and $F(\varGamma_{\ell_i})$ must converge to $F(\varSigma^\dagger)$; thus $F(\varGamma) = F(\varSigma^\dagger)$. 
	By assumption, $F$ is injective, hence $\varGamma = \varSigma^\dagger$ and $(\varGamma_{\ell_i})_{i \in \N}$ converges to $\varSigma^\dagger$ in $\HsdD$.
	Because we did this with every subsequence of $(\varSigma_{k})_{k \in \N}$, the latter must converge to $\varSigma^\dagger$ in $\HsdD$.

	Finally, we use \cref{thm:Rigidity} to show that $\varSigma_k$ converges to $\varSigma^\dagger$ also in $\Holder[1]$. 
	Recall that we have shown that $\Energy(\varSigma_k) \leq E$ for all $k \in \N$.
	Let $\delta = \delta(R,E,2,3)$ and $C = C(R,E,2,3)$ be the constants from \cref{thm:Rigidity}.
	Because $\varSigma_{k} \converges[k \to \infty] \varSigma^\dagger$ in $\varSigma^\dagger$, there is a $K \in \N$ such that $\HsdD(\varSigma_{k},\varSigma) \leq \delta$ for all $k \geq K$.  
	Hence, \cref{thm:Rigidity} implies that there are $\Holder[1]$-diffeomorphisms $\diffeo_k \colon \AmbSpace \to \AmbSpace$ such that $\diffeo_k(\varSigma) = \varSigma_k$,
	$\norm{\diffeo_k - \id_{\AmbSpace}}_{\Lebesgue[\infty]} \leq C \, \HsdD(\varSigma_k,\varSigma) \converges[k \to \infty] 0$
	and
	$\norm{D(\diffeo_k - \id_{\AmbSpace})}_{\Lebesgue[\infty]}
		\leq 
		C \, \HsdD(\varSigma_k,\varSigma)^{\alpha/2} \converges[k \to \infty] 0
	$.
	This concludes the proof.
\end{proof}

\begin{remark}
An inspection of the proofs shows that Theorems \ref{thm:Existence} and \ref{thm:Regularization} remain valid for 
any operator $F$ which  maps surfaces $\varSigma^{\dagger}$ satisfying condition \eqref{eq:APriori}
to some Banach space $Y$ and which is injective and continuous with respect to the 
$C^1$-topology. 
\end{remark}

\subsection{The optimization algorithm}\label{sec:Optimization}

We would like to perform the numerical optimization of the generalized Tikhonov functional in the open subset $\Emb[s,p][][\Domain][\R^3]$ of the Banach space $\Sobo[s,p][][\Domain][\R^3]$.
For $f \in \Emb[s,p][][\Domain][\R^3]$ we set $\Tikhonov_\alpha^\delta(f) \ceq \Tikhonov_\alpha^\delta(f(\Domain))$. 
As $p > 2 \,\DomDim = 4$, we have seen in \cref{sec:Scat} and \cref{prop:FrechetDerivative} that
$F$ and $\Energy$ and thus $\Tikhonov_\alpha^\delta$ are at least three times Fréchet differentiable, and thus ``smooth'' from the perspective of numerical optimization.
The canonical idea would be to apply Newton's method. 
This would amount to solving the following linear equation in weak form:
\begin{align*}
	D^2 \!\Tikhonov_\alpha^\delta(f) (v,w) = - D \Tikhonov_\alpha^\delta(f) \, w
	\quad 
	\text{for all $w \in \Sobo[s,p][][\Domain][\R^3]$.}
	\label{eq:Newton}
\end{align*}
Then one would use $f + v$ as updated state.
However, this would require the second derivative 
\begin{equation*}
	D^2 \!\Tikhonov_\alpha^\delta(f) (v,w)
	=
	\inner{F(f) - g^\delta\!, D^2 F(f)(v,w)}_{\Lebesgue[2]}
	+
	\inner{DF(f) \, v, DF(f) \, w}_{\Lebesgue[2]}
	+
	\alpha \, D^2 \Energy(f)(v,w)
	,
\end{equation*}
at least in the form of the action of $v \mapsto D^2 \Tikhonov_\alpha^\delta(f) (v,\cdot)$, 
so that the update direction $v$ may be computed by an iterative linear solver.
Unfortunately, the second derivative $D^2 F(f)$ of the forward operator would be particularly expensive, and it is also not expected to have a significant effect on $D^2 \Tikhonov_\alpha^\delta(f)$ when $F(f) - g^\delta$ is small (as it is finally expected for low noise level).
Hence, it has become a common practice to drop the term $\inner{F(f) - g^\delta, D^2 F(f)(v,w)}_{\Lebesgue[2]}$ entirely and merely use the Gauss--Newton approximation $\inner{DF(f) \, v, DF(f) \, w}_{\Lebesgue[2]}$; and so do we.
Also, $D^2 \Energy(f)$ is non-trivial to implement, 
so we instead use the metric $\Metric_f = \cA_f + \cB_2(f)$ from \cref{eq:Metric}.
In summary, we compute the search direction $v$ by solving the following linear equation in weak formulation:
\begin{align}
	\label{eq:update_step_weak}
	\inner{DF(f) \, v, DF(f) \, w}_{\Lebesgue[2]}
	+
	\alpha \, \Metric_f(v,w)
	=
	-
	\inner{F(f) - g^\delta\!, D F(f) \, w }_{\Lebesgue[2]}
	-
	\alpha
	\,
	D \Energy(f) \, w
\end{align}
for all $w \in \Holder[\infty][][\Domain][\AmbSpace]$.
We may also interpret this as minimizing the local, quadratic proxy of $\Tikhonov_\alpha^\delta(f+v)$
from \cref{eq:LocalQuadraticProxy} with
\begin{align}
	E_f(v)
	\ceq 
	\Energy(f)
	+
	D\Energy(f) \, v
	+
	\tfrac{1}{2} \, \Metric_f(v,v)
	.
	\label{eq:LocalQuadraticProxyMetric}
\end{align}
Note that the left-hand side of \cref{eq:update_step_weak} is a positive semidefinite bilinear form in $v$ and $w$.
So the solution $v$ is guaranteed to be a descending direction for $\Tikhonov_\alpha^\delta$, i.e., $D \Tikhonov_\alpha^\delta(f) \, v \leq 0$.
This is something that is not guaranteed by Newton's method for a nonconvex optimization problem like ours.

We have yet to show that equation \cref{eq:LocalQuadraticProxyMetric} admits a unique solution $v = v(f)$.
Moreover, it makes sense to use $v(f)$ as update only if $f + v(f)$ has at least the regularity of~$f$; otherwise regularity would deteriorate, and we could not take further steps afterwards.
We claim that we can guarantee both if we assume that $f$ has lightly higher regularity.
More precisely, we claim the following:

\begin{lemma}\label{lem:UpdateDirectionIsLipschitzContinuous}
	Suppose $p > 4$ so that $s=2-2/p > 1$. (This is the condition to make the tangent-point energy repulsive.)
	Moreover, suppose that $\diffoffset > 0$ and  $r \in \intervaloo{p,\infty}$ satisfy
	$2/r \leq \diffoffset < 2/p $ (this is the condition from \cref{prop:DerivativeExtended}).
	Then for $\alpha > 0$ and each $f \in \Emb[s+\diffoffset,r][][\Domain][\R^3]$ there is a unique weak solution $v(f) \in \Sobo[s+\diffoffset,r][][\Domain][\R^3]$ of \cref{eq:update_step_weak}.
\end{lemma}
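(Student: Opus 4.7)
My plan is to recognize \cref{eq:update_step_weak} as a linear operator equation $A_f v = \ell_f$ posed in an appropriate dual space, and then solve it via the Fredholm alternative using the ellipticity from \cref{prop:LpEllipticity}. I will take $X \ceq \Sobo[s+\diffoffset,r][][\Domain][\R^3]$ as the solution space and (after the continuous, in fact compact, Sobolev embedding $X \hookrightarrow Y$ into $Y \ceq \Sobo[s-\diffoffset,r'][][\Domain][\R^3]$ with $r' \ceq r/(r-1)$) use $Y$ as the test space. The bilinear form $a_f(v,w) \ceq \alpha \, \Metric_f(v,w) + \inner{DF(f) v, DF(f) w}_{\Lebesgue[2]}$ will then induce a bounded operator $A_f \colon X \to Y'$.

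First I would check that all pieces of $a_f$ and of the right-hand side $\ell_f(w) \ceq -\alpha \, D\Energy(f) \, w - \inner{F(f) - g^\delta, DF(f) w}_{\Lebesgue[2]}$ extend continuously to $X \times Y$, respectively $Y$. Boundedness of $\Metric_f$ on $X \times Y$ is \cref{prop:welldef}, and $D\Energy(f) \in Y'$ is \cref{prop:DerivativeExtended}. For the forward-operator terms I would use the Morrey embedding $X \hookrightarrow \Holder[1][][\Domain][\R^3]$ together with $DF(f) \colon \Holder[1][][\Domain][\R^3] \to \Holder[k][][\S^2][\C^\NumWaves]$ for any $k$ (see \cref{sec:FrechetDerivativeForwardOperator}); this makes $DF(f) \colon X \to \Lebesgue[2]$ \emph{compact}, and so $DF(f)^* DF(f) \colon X \to Y'$ is compact by dualization through $X \hookrightarrow Y$.

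With this framework in place, I would decompose $A_f = \alpha \, \Riesz_f + K_f$, where $K_f$ is the compact piece coming from the scattering term. By \cref{prop:LpEllipticity}, $\alpha \, \Riesz_f \colon X \to Y'$ is Fredholm of index~$0$ with kernel equal to the constants, and the compact perturbation keeps $A_f$ Fredholm of index~$0$. It then remains to show that $\ker(A_f) = \braces{0}$: for $v \in \ker(A_f)$, testing with $v$ itself (using $X \hookrightarrow Y$) yields $0 = a_f(v,v) = \alpha \, \Metric_f(v,v) + \norm{DF(f) v}_{\Lebesgue[2]}^2$. Both non-negative summands vanish separately, and $\Metric_f(v,v) = 0$ forces $v$ to be constant.

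The main obstacle is ruling out nontrivial constants in the kernel. My first choice is to follow the suggestion after \cref{prop:LpEllipticity} and replace $\Metric_f$ by $\Metric_f + \varepsilon \inner{\cdot,\cdot}_{\Lebesgue[2](f)}$ for some small $\varepsilon > 0$, a harmless low-order perturbation that makes all three steps above go through unchanged and immediately forces $v = 0$ through the $\Lebesgue[2]$ contribution. A more intrinsic alternative is to invoke the injectivity of $DF(f)$ on rigid translations directly: for constant $v = c$, \cref{eq:derivative_bvp} identifies $DF(f) \, c$ with the far field of the radiating Helmholtz solution with Dirichlet data $-(\partial u / \partial \nu) \inner{\nu, c}$, and Rellich's lemma, together with the observation that $\partial u / \partial \nu$ does not vanish identically on $\varSigma$ (it solves \cref{eq:directmethod} with non-trivial right-hand side) and that the outer normal $\nu$ attains all directions on a closed surface, forces $c = 0$. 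Either route closes the Fredholm alternative and produces the unique weak solution $v(f) = A_f^{-1} \ell_f \in X$, as required.
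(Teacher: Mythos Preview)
Your overall strategy---Fredholm index~$0$ from $\alpha\,\Riesz_f$, compact perturbation from the scattering term, injectivity by testing against $v$ itself---matches the paper. But there is a genuine gap in how you show the scattering bilinear form lives on $X \times Y$.

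You argue that $DF(f)\colon X \to \Lebesgue[2]$ is compact via the Morrey embedding $X \hookrightarrow \Holder[1]$, and then claim ``$DF(f)^* DF(f)\colon X \to Y'$ is compact by dualization through $X \hookrightarrow Y$''. But the embedding $X \hookrightarrow Y$ gives $Y' \hookrightarrow X'$, i.e., $Y'$ is the \emph{smaller} dual space. Knowing that $DF(f)^* J_{\Lebesgue[2]} DF(f)$ maps $X$ into $X'$ does \emph{not} tell you it lands in $Y'$. What you actually need is that the functional $w \mapsto \inner{DF(f)v, DF(f)w}_{\Lebesgue[2]}$ is continuous for $w$ ranging over the low-regularity space $Y = \Sobo[s-\diffoffset,r'\!]$, which requires $DF(f)$ to extend boundedly to $Y$. (The same issue affects your right-hand side term $w \mapsto \inner{F(f)-g^\delta, DF(f)w}$.) Since $Y$ does \emph{not} embed into $\Holder[1]$---its regularity is well below $1$---the differentiability results you cite for $F$ do not apply.

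The paper addresses exactly this point and it is the technical core of the proof: one shows $Y \hookrightarrow \Bessel[1/2][][\Domain]$, factors $DF(f) = S_f \, A_f$ with $S_f$ the Dirichlet-to-far-field map and $A_f w = -(\partial u/\partial\nu \circ f)\inner{\nu\circ f, w}$, and then proves $A_f\colon \Bessel[1/2] \to \Bessel[1/2]$ is bounded using $\Holder[0,\sigma]$-regularity of $\nu$ and $\partial u/\partial\nu$ together with the product rule of \cref{lem:ProductRuleWspWtinfty}. Without this step you cannot place the operator equation in $Y'$ at all.

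A minor point on the kernel: your first alternative (adding $\varepsilon\inner{\cdot,\cdot}_{\Lebesgue[2]}$ to $\Metric_f$) proves a modified equation, not \cref{eq:update_step_weak} as stated. Your second alternative is correct in spirit; the paper gives a cleaner version via the translation identity $F_k(f+\xi) = \ee^{-\ii\kappa\inner{\xi,\cdot}} F_k(f)$, whose differentiation yields $DF_k(f)\,\xi = -\ii\kappa\inner{\xi,\cdot}\,F_k(f)$, and then $\xi=0$ follows from Rellich's lemma plus the fact that $z\mapsto \inner{\xi,z}$ vanishes only on a null set of $\mathbb{S}^2$.
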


\begin{proof}
    Recall from \cref{prop:welldef,prop:LpEllipticity} that 
	$
		\smash{\Metric_f \colon \Sobo[s+\diffoffset,r][][\Domain][\R^3] \times \Sobo[s-\diffoffset,r'\!][][\Domain][\R^3] \to \R}
	$
	is a bounded, bilinear form and that the induced linear operator is Fredholm of index $0$. 
	We first show that the first term on the left-hand side of \cref{eq:update_step_weak} is also a bounded, bilinear form on $\Sobo[s+\diffoffset,r][][\Domain][\R^3] \times \Sobo[s-\diffoffset,r'\!][][\Domain][\R^3]$ and that the induced operator is compact. 

	As $s+\diffoffset > 1$ and $s+\diffoffset-\frac{2}{r} \geq s>\frac{3}{2}> 1$, the Sobolev embedding theorem implies that $\Sobo[s+\diffoffset,r][][\Domain][\R^3]$ is compactly embedded into $\Holder[1][][\Domain][\R^3]$. Since $F$ is Fréchet differentiable from $\Holder[1][][\Domain][\R^3]$ to $\Lebesgue[2][][\mathbb{S}^2][\C^{\NumWaves}]$ (see \cref{prop:FrechetDerivative}), the operator $DF(f)$ is bounded from $\Sobo[s+\diffoffset,r][][\Domain][\R^3]$ to $\Lebesgue[2][][\mathbb{S}^2][\C^{\NumWaves}]$. 

	Next we study $DF(f)$ as an operator from $\Sobo[s-\diffoffset,r'\!][][\Domain][\R^3]$ to $\Lebesgue[2][][\mathbb{S}^2][\C^{\NumWaves}]$. 
	Note that $\Sobo[s-\diffoffset,r'\!]$ is continuously embedded into $\Bessel[1/2][][\Domain][\R^3]$ because 
	\[
		s-\diffoffset 
		> 
		s - \sdfrac{2}{p} 
		> 
		1 
		> \sdfrac{1}{2}
		\qand
		(s-\diffoffset) - \sdfrac{2}{r'}
		>
		\pars[\Big]{\sdfrac{3}{2}-\sdfrac{2}{r}} - \pars[\Big]{\sdfrac{2}{1} - \sdfrac{2}{r}}
		=
		\sdfrac{1}{2}
		=
		\sdfrac{1}{2} - \sdfrac{2}{2}
		.
	\]
	Since the norm on $\Sobo[s-\diffoffset,r'\!]$ is rather weak, Fréchet differentiability results for $F$ with respect to this norm are not available. However, we can establish the required mapping properties of $DF(f)$ by using the characterization of $DF(f)$ from \cref{sec:FrechetDerivativeForwardOperator}:
	Let $\nu \colon \varSigma \to \R^3$ be the outward normal of $\varOmega$ and denote by $u = (u_1,\dotsc,u_\NumWaves) \colon \R^3 \setminus \varOmega \to \C^\NumWaves$ the radiating solutions on the exterior domain.
	Now we can write  
	\[
        DF(f) \, v = S_f \, A_f v,
        \quad 
        \text{where}
        \quad 
        A_f v \ceq - \pars[\Big]{\sdfrac{\partial u}{\partial \nu} \circ f} \inner{\nu \circ f,v}
    \]
	and where $S_f\colon  \Bessel[1/2][][\Domain][\C^{\NumWaves}] \to \Lebesgue[2][][\mathbb{S}^2][\C^{\NumWaves}]$ is the far field operator. It maps Dirichlet boundary data on the surface $\varSigma = f(\Domain)$ to far field patterns of the corresponding radiating solution of the Helmholtz equation in the exterior domain.
	It remains to show that $A_f$ is bounded from $\Sobo[s-\diffoffset,r'\!][][\Domain][\R^3]$ 
	(or  $\Bessel[1/2][][\Domain][\R^3]$) to $\Bessel[1/2][][\Domain][\C^{\NumWaves}]$.
	To this end, note that the surface $\varSigma=f(\Domain)$ is of class $\Holder[1,\sigma]$ with Hölder exponent 
	\[
		\sigma 
		= 
		1 - \sdfrac{2}{p} + \diffoffset - \sdfrac{2}{r} 
		\geq 
		1 - \sdfrac{2}{p} 
		> 
		\sdfrac{1}{2}
		.
	\]
	Hence, we have $\nu_f \in \Holder[0,\sigma][][\Domain][\R^3]$.
	It follows from elliptic regularity results for such domains (see, e.g., \cite[Thm. 8.33]{GT:77}) that $\partial u / \partial \nu\in \Holder[0,\sigma ][][\varSigma][\C^{\NumWaves}]$ and thus also $\partial u / \partial \nu \circ f \in \Holder[0,\sigma][][\Domain][\C^{\NumWaves}]$. 
	Hence, we see that 
	\[
		 - \pars[\Big]{\sdfrac{\partial u}{\partial \nu} \circ f} \pars{\nu \circ f}
		 \in \Holder[0,\sigma][][\Domain][\Hom(\R^3;\C^\NumWaves)].
	\]
	By \cref{lem:ProductRuleWspWtinfty} in \cref{sec:ProductRule} for $\tau = 1/2$, $p = 2$,
	the multiplication operator $A_f$ is bounded from 
	$\Bessel[1/2][][\Domain][\R^3]$ to $\Bessel[1/2][][\Domain][\C^N]$.
	The strong formulation of \cref{eq:update_step_weak} becomes
	\begin{align}\label{eq:update_step2}
		L_f v 
		\ceq 
		\pars[\big]{A_f'\, S_f' \, J_{L^2}DF(f)+\alpha \, \Riesz_{f} } \, v 
		= 
		- \pars[\big]{ 
			A_f' \, S_f' \, J_{L^2} \pars{F(f) - g^\delta} + \alpha \,  \overline{D\Energy}(f)
		}
		,
	\end{align}
	where $J_{L^2} \colon \Lebesgue[2][][\mathbb{S}^2][\NumWaves]\to \Lebesgue[2][][\mathbb{S}^2][\NumWaves]'$ is the Riesz isomorphism. 
	From the above it follows that
	\[
		L_f \colon 
		\Sobo[s+\diffoffset,r][][\Domain][\R^3] 
		\to 
		\pars{\Sobo[s-\diffoffset,r'\!][][\Domain][\R^3]}' 
		= 
		\Sobo[\diffoffset+s,r\!][][\Domain][\R^3]  
	\] 
	is the sum of a compact operator and a Fredholm operator of index~$0$. Thus, $L_f$ is a Fredholm of index~$0$, too. 
	The above arguments also show that the first term on the right-hand side belongs to the range of $L_f$. 
	And for the second term this has been established already in Proposition \cref{prop:DerivativeExtended}.
	
	It remains to show that $L_f$ is injective. Let $v \in \Sobo[s+\diffoffset,r][][\Domain][\R^3]
	$ be an element of its nullspace.
	Because of the Sobolev embeddings $\Sobo[s+\diffoffset,r][][\Domain][\R^3] \embeds \Sobo[s,p][][\Domain][\R^3] \embeds \Sobo[s-\diffoffset,r'][][\Domain][\R^3]$,
	we may test with $w = v$ to obtain
	\begin{equation}
		0 
		=
		\inner{L_f \, v,v}
		= \inner[\big]{
			\pars[\big]{A_f' \, S_f' \, J_{\Lebesgue[2]} \, DF(f)+ \alpha \, \Riesz_{f} } \, v, v			
			}
		=
		\|DF(f) \, v\|_{\Lebesgue[2]}^2 + \alpha \, \cG_f(v,v)
		.
		\label{eq:PositiveDefiniteness}
	\end{equation}
	This implies $\cG_f(v,v) = 0$ and $DF(f) \, v = 0$.
	The former implies that $\alpha \, \Riesz_{f} \, v = 0$.
	By \cref{prop:LpEllipticity}, the nullspace of $\alpha \, \Riesz_{f}$ consists only of constant functions $v$ of the form $v(x) \ceq \xi$, where $\xi \in \R^3$ is a translation vector. 
	As $F_k(f+\xi)=\ee^{-\ii \, \kappa \inner{\xi ,\cdot}} \, F_k(f)$ for all $k=1,\dots,\NumWaves$ (see, e.g., \cite{KR:97}), we have $DF_k(f) \, \xi = -(\ii \, \kappa\inner{\xi,\cdot}) F_k(f)$. As 
	the zeros of the function $z\mapsto -\ii \, \kappa \inner{\xi, z}$ on the unit sphere $\mathbb{S}^2$ form a nullset unless $\xi=0$ and since the far-field patterns $F_k(f)$ cannot vanish due to Rellich's lemma 
	(see \cite[Lemma 2.12]{coltonkress}), we get $\xi=0$.
	Thus, $L_f$ is injective, and hence bijective with bounded inverse.
\end{proof}

\begin{remark}
	It follows from the implicit function theorem for Banach spaces that under the assumptions of \cref{lem:UpdateDirectionIsLipschitzContinuous}, the mapping
	\[
		v \colon \Emb[s+\diffoffset,r][][\Domain][\R^3] \to \Sobo[s+\diffoffset,r][][\Domain][\R^3],
		\quad 
		f \mapsto v(f)
	\]
	is differentiable (or at least locally Lipschitz-continuous) if we can show that all operators in \cref{eq:update_step2} depend differentiably  (or locally Lipschitz-continuously) on $f$. The only 
	problematic term is $f\mapsto A_f$, more precisely differentiability of $f\mapsto \frac{\partial u_k}{\partial \nu} \circ f$ with 
	respect to the norm $\Holder[0,\sigma][][M][\C]$. While this seems plausible, we are not aware of such a result in the literature. A possible approach to prove this would be to use characterization 
	\cref{eq:directmethod} via boundary integral equations and then generalize differentiability results 
	for boundary integral operators with respect to the boundary in \cite{potthast:94} from $C^2$ to 
	$C^{1,\sigma}$ surfaces. Differentiability of $f\mapsto S_f$ in the given norms can be derived from 
	the arguments in \cite[sec.~1.1]{hohage_diss}.
\end{remark}

\begin{remark}\label{rem:LineSearch1}
	In numerical applications we also have to perform a line search to make the method stable.
	Local Lipschitz continuity of $f \mapsto v(f)$ would mean that a simple Armijo backtracking line search suffices to guarantee a numerically stable descent algorithm. 
	Our experiments showed that we do not get any excessively small time steps. 
	This supports our hypothesis that $f \mapsto v(f)$ is locally Lipschitz continuous.
\end{remark}

\begin{remark}\label{rem:LineSearch2}	
	Moreover, we have to guarantee that the update is an embedding of the same isotopy class as $f$.
	In practice, we use standard techniques from continuous collision detection to find the maximal step size $t_{\max} >0$ such that the linear homotopy $f_t \ceq f + t \, v$ is an isotopy for $t \in \intervalco{0,t_{\max}}$.
	Then we apply Armijo backtracking from there to find a step size $t = t(f,v) \in \intervaloo{0,t_{\max}}$ such that
	$
		\Tikhonov_\alpha^\delta(f + t \, v) 
		\leq  
		\Tikhonov_\alpha^\delta(f) + \sigma_{\mathrm{Armijo}} \, t \, D\Tikhonov_\alpha^\delta(f) \, v
	$,
	where
	$0 < \sigma_\mathrm{Armijo} < 1$ is a fixed constant.
	Finally, we use $f + t \, v$ as the update.
\end{remark}

\begin{remark}
Notice that all the ingredients $\Energy$, $F$, and $\Metric$ for our update direction $v(f)$ are covariant.
Thus, $v(f)$ is covariant, too, i.e., we have $v(f \circ \varphi) = v(f) \circ \varphi$ for all reparameterizations $\varphi \colon \Domain \to \Domain$. Moreover, also the step size $t(f)$ obtained by line search is invariant under reparameterization.
\end{remark}

\subsection{Iteratively regularized Gauss--Newton method for surfaces}\label{sec:IRGNM}

Minimizing this Tikhonov functional by a Gauss--Newton method for a fixed regularization parameter $\alpha$ and successively shrinking $\alpha$ after reaching the minimum takes many iterations.
Hence, we propose an adapted version of the iteratively regularized Gauss Newton method (see \cref{sec:RegularizationMethods}), which was first introduced for the case of Hilbert spaces by Bakushinskij in \cite{zbMATH00205051}.
The method is based on the ideas of Tikhonov regularization.
The algorithm is as follows:
First we choose a starting regularization parameter $\alpha_0>0$ and a scaling factor $\rho\in(0,1)$.
Then we determine $f_{k+1}$, $k \in \N \cup \braces{0}$ iteratively:
In the $k$-th iteration, we first
solve \cref{eq:update_step2} with $f = f_k$ and $\alpha = \alpha_k$; we denote the solution by $v_k$.
Then we determine a step size $t_k = t(f_k,v_k)$ and set 
\begin{align}
	f_{k+1} = f_k + t_k \, v_k
	\qand
	\alpha_{k+1} = \varrho \, \alpha_k.
	\label{eq:update}
\end{align}
We stop this iteration by using the \emph{discrepancy principle}:
Since the input data $g^\delta$ is not exact anyway, there is little gain in fitting $F(f_k)$ overly well to $g^\delta$.
In practice, we assume that we have a rough idea of the noise level $\delta$ and choose some small $\tau>1$. Then we terminate the iteration as soon as we have $\norm{F(f_k)-g^\delta}_{\Lebesgue[2]}<\tau\,\delta$.
This stopping rule is well-known to make the iteratively regularized Gauss--Newton method an iterative regularization method \cite{zbMATH01033895,hohage_diss} under 
additional assumptions.


\section{Implementation}\label{sec:Implementation}

This section is dedicated to the implementation of the algorithm proposed in \cref{sec:Optimization} and \cref{sec:IRGNM}.
In particular, we split this task into the discretization of the forward operator and the implementation of the tangent-point energy.
We close the section by a brief explanation on how the calculation of the update step is implemented.

\subsection{Discretization of the forward operator}
\label{sec:discretisation}

To handle the boundary operators from the previous section numerically, one needs to approximate them. 
We chose to do so by a \emph{boundary element method} (BEM).

\subsubsection{Boundary element methods}

The advantages of BEMs over finite element methods are 
that they do not require to mesh the unbounded domain $\R^3 \setminus \overline{\varOmega}$; instead, one only needs to mesh the boundary $\varSigma = \partial \varOmega$. 
This is particularly useful in our regularization procedure, because the 3-dimensional domain $\varOmega$ changes from iteration to iteration.
Furthermore, the Sommerfeld radiation condition is implicitly satisfied by this ansatz.
The downside of this method is that we have to solve boundary integral equations whose discretizations lead to dense matrices. 
However, this issue can be amended by employing suitable fast multipole methods or by approximating the integral operators data-sparsely by hierarchical matrices. 
We found it difficult to find a freely available and portable implementation that can be tailored to our needs.
Since our main goal is to demonstrate that the tangent-point energy is a viable regularizer for inverse scattering problems, we implemented the action of the boundary integral operators in a straight-forward matrix-free manner in OpenCL and executed it on a GPU (see \cref{sec:MatrixProduct}).

There are several boundary element discretizations on triangulated meshes.
For instance there are the method of collocation \cite{bem}, Nystr\"om methods, and the Galerkin formulation \cite{bem,bempp}.
We decided to use a Galerkin formulation.
Our eventual input data will be noisy. 
So, it suffices for our needs to use a method that provides coarse approximations but that is computationally efficient.
We aim for rather fine meshes and discrete function spaces of low order, because we would like to resolve fine surface details and because our discretization of the tangent-point energy hinges upon fine polyhedral discretizations, anyway (see \cref{sec:DiscTP}.)

Consider a triangle mesh $\Domain \ceq (V,E,T)$ with vertex set $V$, edge set $E$, and triangle set $T$ approximating a $C^{1,\alpha}$ surface $\varSigma$.
Again, we denote by $\kappa>0$ the wave number.
According to the suggestions in \cite{bemprodalg}, we choose the function space $\mathcal{S}^1(\Domain)$ of continuous piecewise linear functions, to discretize both function spaces $\Bessel[\frac{1}{2}][][\varSigma][\C]$ and $\Bessel[-\frac{1}{2}][][\varSigma][\C]$.
Abbreviating $\dd x = \dd \HsdM^2(x)$ and $\dd y = \dd \HsdM^2(y)$, the boundary integral operators 
$
\mathcal{V},\mathcal{K} \colon \mathcal{S}^{1}(\Domain) \to \mathcal{S}^{1}(\Domain)
$
are then discretized in the weak formulation by 
\begin{subequations}\label{eq:galerkin}
\begin{align}
	\inner{\psi,\mathcal{V} \varphi}_{\Lebesgue[2]}
	&= 
	\sum_{t_1,t_2\in T}
	\int_{t_1}\int_{t_2} \Phi_\kappa(x,y) \, \varphi(y) \, \psi(x) \dd y \dd x
	\label{eq:sing}
	\\
	\inner{\psi,\mathcal{K} \varphi}_{\Lebesgue[2]}
	&= 
	\sum_{t_1,t_2\in T}
	\int_{t_1}\int_{t_2} \frac{\partial\Phi_\kappa(x,y)}{\partial\nu(y)} \, \varphi(y) \, \psi(x) \dd y \dd x
\end{align}
\end{subequations}
for $\varphi,\psi \in\mathcal{S}^1(\Domain)$.
We further denote the space of piecewise constant functions on the faces by $\mathcal{S}^0(\Domain)$,
and introduce the area-weighted averaging operator
$
	\Av \colon \mathcal{S}^1(\Domain) \to \mathcal{S}^0(\Domain)
$
by 
$\pars{\Av \, \varphi}(t) \ceq \int_t \varphi(x) \dd x$.
For two distinct triangles $t_1$, $t_2 \in T$ the integrals \cref{eq:galerkin} are non-singular, and we can therefore use a simple midpoint quadrature:
We denote the barycenters of the triangles by $m_1$ and $m_2$, respectively.
In this case we obtain the following approximations:
\begin{align*}
	\int_{t_1}\int_{t_2} \Phi_\kappa(x,y) \,\varphi(y) \, \psi(x) \dd y \dd x
	&\approx 
	\Phi_\kappa(m_1,m_2) \pars{\Av \varphi}(t_2) \pars{\Av \psi}(t_1) 
	\\
	\int_{t_1}\int_{t_2} \frac{\partial\Phi_\kappa(x,y)}{\partial\nu(y)} \, \varphi(y) \, \psi(x) \dd y \dd x
	&
	\approx  
	\frac{\partial\Phi_\kappa(m_1,m_2)}{\partial\nu(m_2)} \pars{\Av \varphi}(t_2) \pars{\Av \psi}(t_1) 
\end{align*}
The integral kernel of the single-layer operator is weakly singular. 
Hence, the case $t_1=t_2$ cannot be neglected and requires a special treatment, which we shall outline now.

\subsubsection{Semi-analytic quadrature}

\begin{figure}[t]
	\centering
	\includegraphics[width=0.45\textwidth,trim=40 40 100 60,clip]{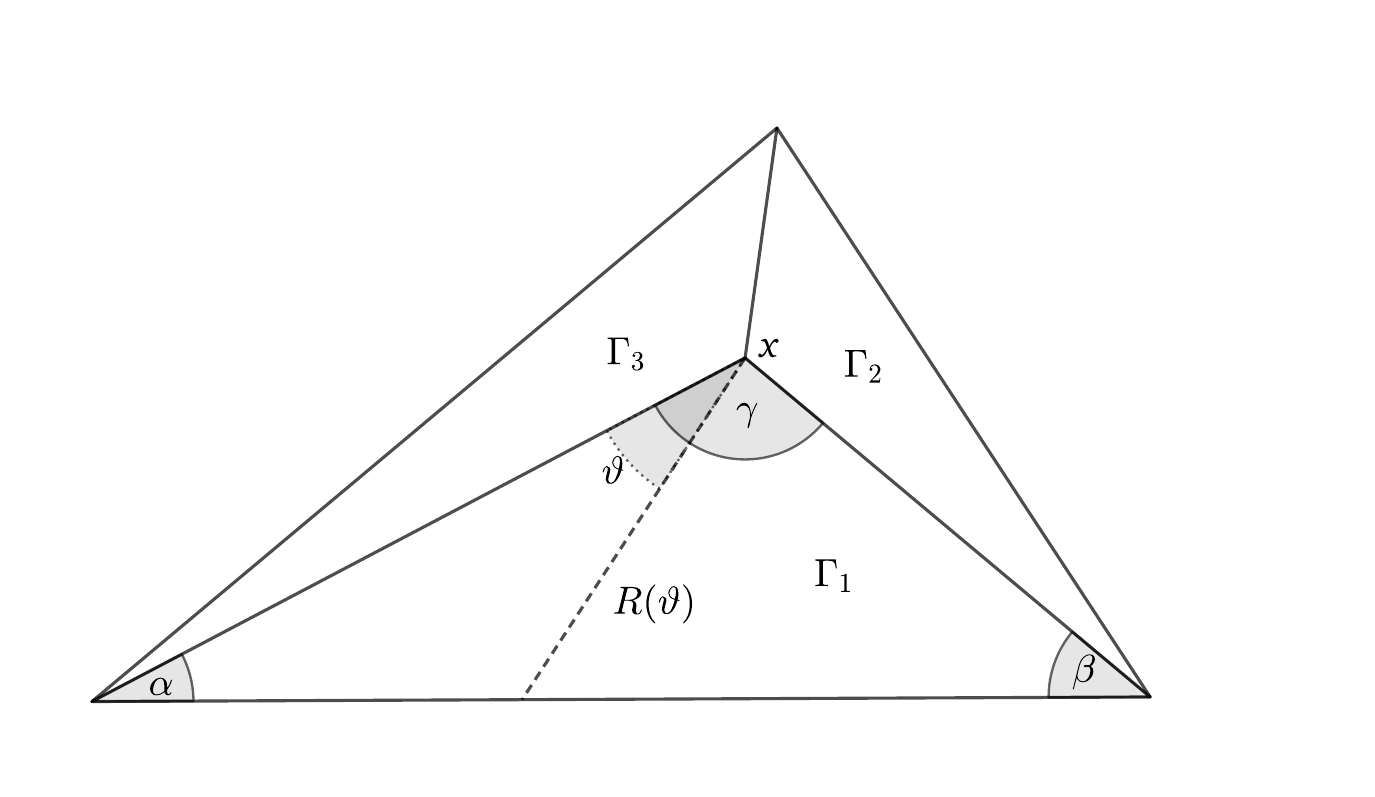}
	\captionof{figure}{Splitting of the triangle for the semi-analytic quadrature.}
	\label{fig:triangle}
\end{figure}

To handle the singular part, we require a special quadrature rule which is well-described in \cite{bem}.
Let $t\in T$ be any triangle and $ x\in t$ an arbitrary point.
Later, for the quadrature, this point will be specified to be the barycenter.
Using the notation from the last section, the inner part of the four-dimensional integral \cref{eq:sing} over $t\times t$ splits into a weakly singular and two nonsingular integrals:
\begin{align*}
	\int_t \Phi_\kappa(x,y) \, \varphi(y) \dd y
	&= 
	\int_t \frac{\varphi(x)}{4\uppi \abs{x-y} } \dd y
	+ 
	\int_t \frac{\varphi(y)-\varphi(x)}{4\uppi \abs{ x-y }} \dd y
	+ 
	\int_t \frac{\ee^{\ii \, \kappa \abs{x-y}}-1}{4\uppi \abs{ x-y }}\varphi(y) \dd y.
\end{align*}
Using polar coordinates, one obtains that the second term on the right-hand side is of order $\mathcal{O}(h^2)$, where $h$ denotes the mesh size.
Therefore, this term can be neglected as we cannot expect a higher order of convergence than $\mathcal{O}(h)$. 
This leads to the following approximation at the midpoint $m$ of $t$ by using midpoint quadrature:
\begin{align*}
	\int_t \Phi_\kappa(m,y) \, \varphi(y) \dd y
	\approx 
	\frac{\varphi(m)}{4\uppi}\int_t \frac{1}{\vert m-y\vert} \dd y
	+ 
	\frac{\ii  \kappa}{4\uppi} \pars{\Av \varphi}(t)
\end{align*}
An elementary computation reveals that the singular integral on the subtriangle $\varGamma_{1} \subset t$ (see \cref{fig:triangle}) has the following closed form:
\begin{align*}
	\int_{\varGamma_{1}} \frac{1}{\vert x - y\vert} \dd y
	&= 
	\int_{0}^{\gamma} R(\vartheta) \dd \vartheta 
	= 
	R(0) 
	\cos(\alpha)
	\pars*{ 
		\ln \pars[\big]{ \cot\pars[\big]{\tfrac{\beta}{2}}\cdot\cot\pars[\big]{\tfrac{\alpha}{2}} }
	}\\
	&=
	R(0)
	\cos(\alpha)
	\pars[\big]{ \atanh(\cos(\alpha)) + \atanh(\cos(\beta)) }
	.
\end{align*}
From this one compute the integral on the whole triangle $t = \varGamma_1 \cup \varGamma_2 \cup \varGamma_3$.

\subsubsection{Matrix-matrix product}
\label{sec:MatrixProduct}

In our applications we use several incident waves with the same wavelength, but with various wave directions  at once. Sometimes we also use multiple different wave numbers for the reconstruction.
This means that we solve multiple boundary value problems as in \cref{eq:bvp} simultaneously.
We denote the number of incident directions by $D\in\N$ and the wave numbers by $\kappa_1,\ldots,\kappa_L$ for $L \in \N$.
The number of total waves is $N=L \cdot D$.
When solving a discretized version of \cref{eq:integral} with an iterative solver, we thus need to perform matrix-matrix products of the form
\begin{equation*}
	[A_{\kappa_1}\cdot B_{\kappa_1},\ldots,A_{\kappa_L}\cdot B_{\kappa_L}]
	\;\;\text{with}\;\;
	A_{\kappa_\ell}\in \C^{\abs{T}\times\abs{T}}
	\;\;\text{and}\;\;
	B_{\kappa_\ell}\in \C^{\abs{T}\times D}
	\;\; 
	\text{for $\ell = 1,\ldots, L$}.
\end{equation*}
Here the square matrices $A_{\kappa_\ell}$ represent the discretized boundary integral operators with respect to $\kappa_\ell$ (as for instance in \cref{eq:integral}) and $B_{\kappa_\ell}$ represents a set of $D$ functions on $\varSigma = \partial \varOmega$, one for each incident direction.
For a given $\kappa = \kappa_\ell$, $\ell=1,\ldots,L$, the matrix-matrix product $C_\kappa = A_\kappa \cdot B_\kappa$ is calculated on the GPU with the following scheme, which is visualized in \cref{fig:matmat}:

Each work group of size $W$ holds a matrix $a$ of size $W \times W$, a matrix $b$ of size $W \times D$, and a matrix $c$ of size $W \times D$, each representing a block of $A_\kappa$, $B_\kappa$, and $C_\kappa$, respectively. 
More precisely, each thread holds a single line of $a$ and a single line of $c$ in local memory; $b$ is held in shared memory.
Then the work group assembles a block of $A_\kappa$ into $a$.
Next the corresponding $W \times D$ block of $B_\kappa$ is loaded into $b$.
These blocks are multiplied with each other and the result is added into the local copy: $c \leftarrow c +  a \cdot b$. 
Afterwards $a$ is overwritten by the next block of $A_\kappa$ to the \emph{right}; the corresponding block $B_\kappa$ \emph{below} the former one is loaded into $b$; then $c \leftarrow c +  a \cdot b$ is computed; and so on.
When a whole row of blocks of $A_\kappa$ is processed, then $c$ is written back into $C_\kappa$ in global memory. 
This way, each row of $C_\kappa$ is written to by exactly one thread, avoiding write conflicts.
If $D$ is too large, then the $W \times D$ blocks of $C_\kappa$ and $B_\kappa$ won't fit into local and shared memory. On our hardware, $D \leq 16$ worked well. If there are more waves with the same wavenumber, then they have to be divided into smaller groups.

\begin{figure}[t]
	\centering
	\includegraphics[trim = 0 585 0 60, 
		clip = true, width=0.85\textwidth]{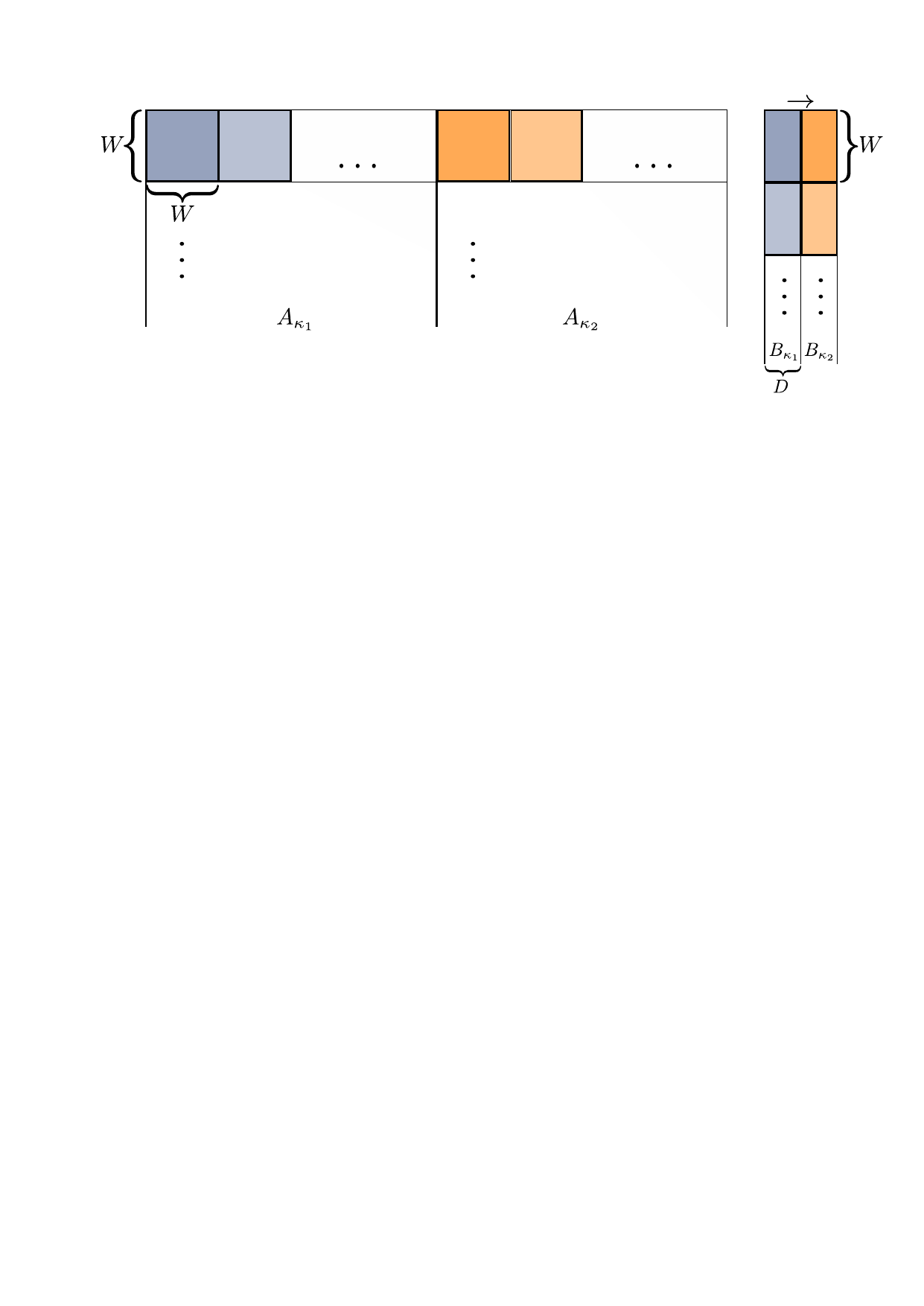}
	\captionof{figure}{Visualization of the matrix-matrix product. Each work group handles a block of size $W \times W$ of $A_{\kappa_\ell}$
	and a block of size $W \times D$ of $B_{\kappa_\ell}$, where  $W$ is the work group size and $D$ is the number of incident wave directions.}
	\label{fig:matmat}
\end{figure}

This approach never stores any of the matrices $A_{\kappa_1},\dotsc,A_{\kappa_\ell}$ in full;
this would not be feasible for the mesh resolutions we use, let alone storing all of them at the same time.
Nonetheless, if there are multiple incoming waves with the same wavelength, then the assembled blocks of $A_{\kappa_i}$ are reused multiple times, minimizing the number of reassemblies.
This allows us to efficiently handle multiple right-hand sides in the operator equation \cref{eq:integral}.
We are not aware of any Helmholtz BEM library that was capable of this functionality before.

\subsubsection{Tests and benchmarks}

\begin{table}[t]
	\centering
	\begin{tabular}{l|l|l|l|l}
	\emph{shape} & \emph{$\#$ triangles} & error\ \  $\mathcal{V}$ & error\ \ $\mathcal{K}$ &  error \  $F$\\
	\hline
	Sphere & $40560$ & $8.868 \cdot 10^{-4}$ & $2.147 \cdot 10^{-3}$ & $2.524 \cdot 10^{-3}$\\
	Sphere & $81920$ & $4.037 \cdot 10^{-4}$ & $1.391 \cdot 10^{-3}$ & $1.771 \cdot 10^{-3}$\\
	Cow	   & $93696$ & $7.772 \cdot 10^{-4}$ & $9.641 \cdot 10^{-3}$ & $6.790 \cdot 10^{-3}$\\
	\end{tabular}
	\begin{tabular}{l}
	\end{tabular}
	\caption{Relative max error to the BEMpp solution for $\kappa=\uppi$.
	$F$ denotes the discretized boundary-to-far field map.}
	\label{tab:bempperr}
\end{table}

\begin{figure}[t]
	\centering
	\includegraphics[width=0.8\textwidth]{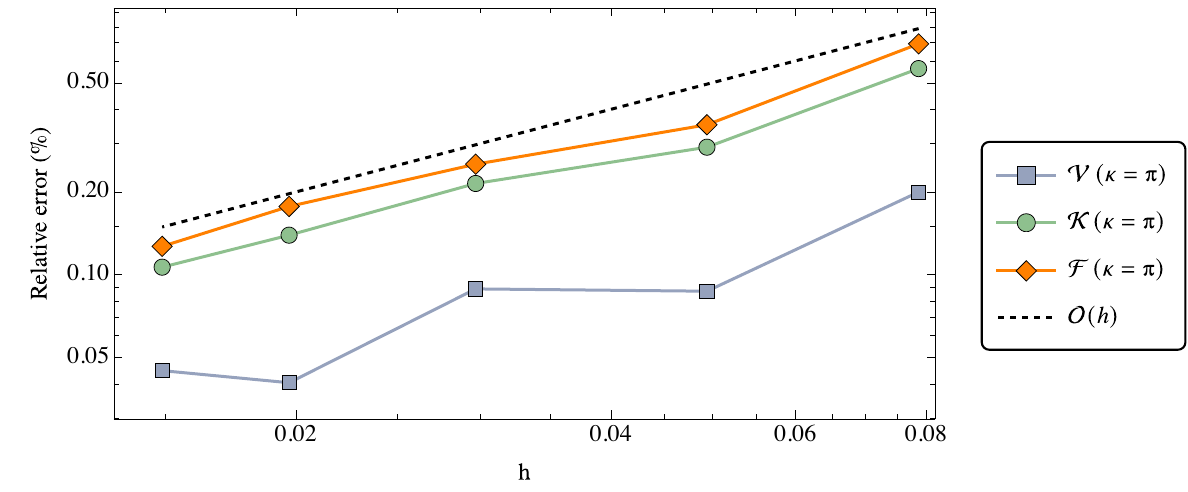}
	\caption{%
	A~log-log plot of the relative max error of the operator images against the average edge length $h$ for a collection of spheres.
	$F$ denotes the discretized boundary-to-far field map.}
	\label{fig:bempperr}
\end{figure}

We present some accuracy tests and benchmarks for our solver as a justification for its usage.
Unfortunately, for non-trivial input functions and surfaces, a closed formula for the integral operators is not available. 
For this reason, we tested BAEMM against a well-established BEM library.
We chose to use BEMpp \cite{bempp} as ground truth, as our approach to the integral equation is similar to theirs; just their quadrature rules are a lot more sophisticated and thus more accurate than ours.
For the tests, displayed in \cref{tab:bempperr} and \cref{fig:bempperr}, the wavenumber is set to $\kappa = \uppi$ and the operators are applied to a plane wave function.
These tests suggest that in the example of spheres one indeed obtains a convergence of order $\mathcal{O}(h)$ to the true solution of \cref{eq:integral}.

Since one of our main goals was to keep an eye on performance, we now turn to benchmarks of the operator evaluation.
\cref{tab:time} shows the timings of the boundary operator evaluations on $16$ planar waves, respectively the far field evaluations for $16$ incident waves, simultaneously with wavenumber $\kappa=\uppi$. 
The timings are averaged over $10$ runs.
We employed an NVIDIA A40 GPU on an AMD EPYC 7313 16-Core CPU.

\begin{table}[ht]
\begin{center}
\begin{tabular}[ht]{l|l|l|l|l}
\emph{shape} & \emph{$\#$ triangles} & apply \ $\mathcal{V}$ & apply \ $\mathcal{K}$ & eval \ $F$\\
\hline
Sphere   & $40560$ & $0.0612\,\mathrm{s}$ & $0.0583\,\mathrm{s}$ & $1.0771\,\mathrm{s}$\\
Sphere   & $81920$ & $0.2256\,\mathrm{s}$ & $0.2193\,\mathrm{s}$ & $1.7708\,\mathrm{s}$\\
``Spot'' & $93696$ & $0.2802\,\mathrm{s}$ & $0.3233\,\mathrm{s}$ & $5.7504\,\mathrm{s}$
\end{tabular}
\caption{%
Averaged runtimes for $16$ simultaneous operator evaluations in seconds with wave number $\kappa=\uppi$. For the evaluation of the far field operator $F$, the discretized version of \cref{eq:integral} is solved and \cref{eq:farfield} is evaluated at $2562$ evaluation points on $\S^2$. 
The relatively long evaluation time for the far field of ``Spot'' can be explained by bad conditioning of the operator equation~\cref{eq:integral}.
The mesh ``Spot'' is shown in \cref{tab:meshes}.}
\label{tab:time}
\end{center}
\end{table}

\subsection{Discretization of the tangent-point energy and fractional operators} 
\label{sec:DiscTP}

To compute the tangent-point energy and its derivative, we employ the \emph{Repulsor} library \cite{Schumacher_Repulsor}.
Although the tangent-point energy is nonlocal, \emph{Repulsor} can do this in time roughly proportional to the number of triangles in the mesh by employing a zeroth order multipole approximation and a tree-accelerated code. 
The library also provides a fast implementation (i.e., nearly linear in the number of triangles) of the action of the operator $\Riesz_f$ (see \cref{eq:RieszOperator}, \cref{prop:welldef}), accelerated by the same tree structures and stored as a hierarchical matrix.
For more details on the discretization of the energy $\Energy(f)$ and the operator $\Riesz_f$ see \cite{Sassen:2024:RS,RepulsiveS,RepulsiveC}.

Being a differential operator of order $3 < 2 \, s < 4$, the operator $\Riesz_f$ is badly conditioned with respect to the $\Lebesgue[2]$ inner product.
Therefore, in order to accelerate the iterative linear solve with $\Riesz_f$, we require a preconditioner.
As discussed in \cref{sec:Preconditioner}, $\Riesz_f$ is roughly $(-\Laplacian_f)^s = (-\Laplacian_f)^{-1}(-\Laplacian_f)^{2-s}(-\Laplacian_f)^{-1}$  (see~\cref{eq:FractionalLaplacianDomSpace}). 
So we construct our preconditioner accordingly:
As it is well-known, the finite-element discretization of  $(-\Laplacian_f)$ is a sparse matrix, the so-called \emph{cotan-Laplacian}.
Thus, we can apply $(-\Laplacian_f)^{-1}$ efficiently by sparse Cholesky factorization followed by forward and backward substitution. 
What remains is $(-\Laplacian_f)^{2-s}$.
Since $1 < s < 2$, we have $0 < 2 - s < 1$.
Analogous to the approach in \cref{sec:Preconditioner}, we replace it by the Gagliardo form
$
	\int_\Domain \! \int_\Domain
		\inner{ \DeltaOp[2-s][f] u, \DeltaOp[2-s][f] v } 
	\dd \Measure[f] 
$. 
Like $\Riesz_f$, the action of the underlying operator is implemented as hierarchical matrix and provided by \emph{Repulsor}.
For more details on this approach see \cite{RepulsiveS}, where this preconditioner was introduced.

\subsection{Implementation of the Gauss--Newton type algorithm} \label{sec:implementation_algorithm}

Here we describe how to implement the adapted version of the iteratively regularized Gauss--Newton method proposed in \cref{sec:IRGNM}.

We use a GMRES solver to solve the discretized version of \cref{eq:integral}.
This is needed to evaluate 
(i)~the far field $F(f_k)$, 
(ii)~the derivative  
$DF(f_k)' \, J_{\Lebesgue[2]} \, (g^\delta - F(f_k))$ of the fitting term, as well as 
(iii)~the action of the Gauss--Newton term 
$DF(f_k)' \, J_{\Lebesgue[2]} \, DF(f_k)$.
GMRES provides the freedom to choose the error tolerance in the stopping criterion.
In our experiments it turned out that (i) requires some decent accuracy (because the objective depends on it), while we may sacrifice quite some accuracy in (ii) and (iii) in favor of speed. 
After all, (ii) and (iii) influence only the search direction (see \cref{sec:Optimization}), and it is well-known that descending algorithms are quite robust against some mild errors in the search direction. 
In practice, we used error tolerances for (ii) and (iii) that are greater by 1--2 orders of magnitude than those for (i).

We now describe the computation of the update direction. 
For this we need to solve a linear equation roughly of the following form (see \cref{eq:update_step2} for details):
\begin{equation}
	\label{eq:GradientEquation}
	\pars[\big]{
		DF(f)' \, J_{L^2}  \, DF(f) + \alpha \, \Riesz_{f}
	} \, v
	= 
	-
	\pars[\big]{
		DF(f)' \, J_{\Lebesgue[2]} \, (F(f) - g^\delta) + \alpha \,  \overline{D\Energy}(f)
	}
	.
\end{equation}
As we have shown in the proof of \cref{lem:UpdateDirectionIsLipschitzContinuous}, the linear operator on the left-hand side is a compact perturbation of $\alpha \, \Riesz_f$.
Hence, the preconditioner from \cref{sec:DiscTP} is effective in preconditioning this operator, too.
Ideally, one would like to use a CG-solver for this purpose. 
Alas, we determine the action of $DF(f)$  and $DF(f)'$ only approximately, so our implementation of
$DF(f)' \, J_{L^2}  \, DF(f)$ is not exactly self-adjoint.
This indeed leads to some issues with the CG algorithm, so we utilize GMRES also here.
Again, with the same reasoning as above, we use a quite high error tolerance of about $1 \cdot 10^{-2}$ for the GMRES stopping criterion.

The next step in calculating the update is to find the step size for \cref{eq:update}.
We do so as detailed out in \cref{rem:LineSearch2} by continuous collision detection (accelerated by bounding volume hierarchies, also provided by \emph{Repulsor}), followed by a simple backtracking line search.
This also prevents self-intersections of the updated surface.

After calculating the surface update, we occasionally perform some remeshing (edge collapse and  edge splits) to prevent the triangles from degenerating; after all, we experience quite some substantial deformation.
Afterwards we perform Delaunay flips until we obtain a Delaunay triangulation and run a few rounds of simple Laplacian smoothing to repair most of the roughness introduced by our remeshing method.
The \emph{Repulsor} library also provides facilities for this.


\begin{table}
	\begin{center}
	\begin{tabular}[h]{l|l|l}
	\emph{shape} & \emph{$\#$triangles true} & \emph{$\#$triangles init.}\\
	\hline
	\emph{Stanford bunny} & $86632$ & $40560$ \\
	\emph{Triceratops} & $90560$ & $81920$\\
	Cow \emph{Spot}& $93696$ & $81920$\\
	Deformed torus & $149340$ & $38400$ \\
	Rubber duck \emph{Bob}& $171008$ & $38400$\\
	Fish \emph{Blub} & $227328$ & $81920$\\
	\end{tabular}
	\hspace{2em}
	\begin{tabular}{l}
	\includegraphics[trim = 110 25 160 65, 
		clip = true,width=0.22\textwidth]{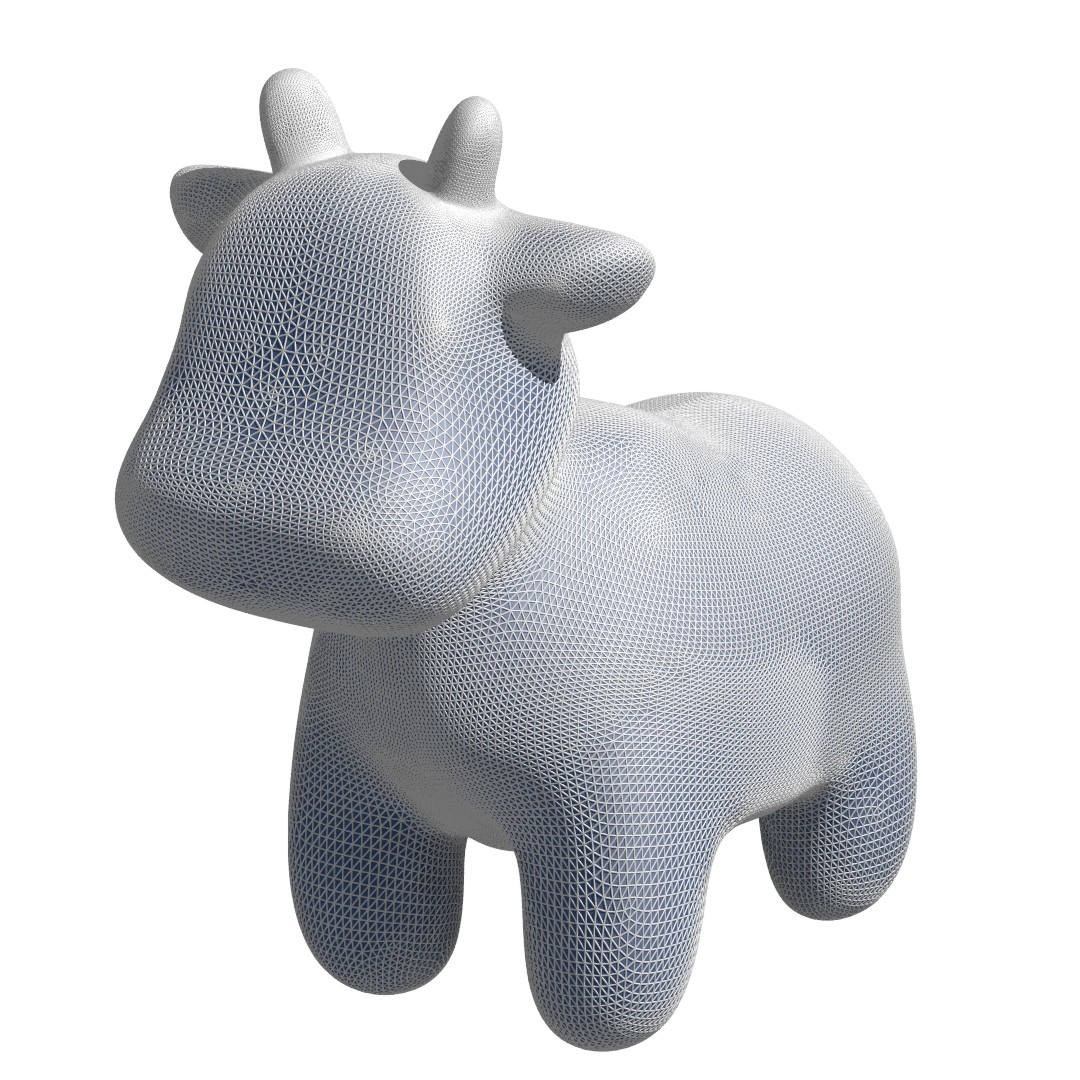}
	\end{tabular}
	\caption{Number of triangles of the meshes representing the true obstacles (used to compute the noisy far field data) and the initial guesses, respectively. 
	On the right-hand side we show the mesh \emph{Spot} to give a visual impression of the employed mesh resolutions.}
	\label{tab:meshes}
	\end{center}
	\end{table}
	
	\begin{table}[t]
	\begin{center}
	\begin{tabular}[h]{l|r|r|r|r|r|r|r}
	\emph{shape}& \emph{\#triangles} & $\emph{wavenumbers}$  & $\NumWaves$ & $\tau$ & \emph{$\#$iter.} & \emph{time}
	& \emph{Hsd.~d.}\\
	\hline
	\emph{Stanford bunny} & $40560$ & $\braces{\uppi,\, 2\,\uppi}$ & $32$ & $3$ & $28$ &$52\,\mathrm{min}$ & $0.0926$\\
	\emph{Stanford bunny} & $40560$ & $\braces{4\,\uppi}$ & $16$ & $2$ & $25$ & $33\,\mathrm{min}$ & $0.0441$\\
	\emph{Triceratops} & $81920$ &  $\braces{\uppi,\, 2\,\uppi}$ &$32$& $4$ & $24$ & $82\,\mathrm{min}$ & $0.0709$\\
	\emph{Triceratops} & $81920$ &  $\braces{4\,\uppi,\, 6\,\uppi}$ & $32$ & $4$ &$26$ & $277\,\mathrm{min}$ & $0.0335$\\
	\emph{Triceratops} & $81920$ &  $\braces{8\,\uppi}$ & $16$ &$3$ & $17$ & $99\,\mathrm{min}$ & $0.0183$\\
	\end{tabular}
	\caption{%
		Quantitative information for the reconstructions of \emph{Stanford bunny} and the \emph{Triceratops}.
		The number $\NumWaves$ refers to the total number of incident waves.
		The number of incident directions is the number of waves divided by the number of wavelengths.
		Again, $\tau$ denotes the discrepancy parameter for the stopping criterion.
		The column \emph{$\#$iter.} lists the number of Gauss--Newton iterations needed to reach the stopping criterion and ``time'' lists the total computation times.
		The last column displays the Hausdorff distance between the reconstruction and the true obstacle divided by the diameter of the true obstacle.}
	\label{tab:computation_times}
	\end{center}
\end{table}

\newpage 

\section{Numerical results}
\label{sec:num}
In this section we demonstrate the versatility of our proposed method for solving inverse obstacle scattering  problems for surfaces in $\R^3$.

If not stated otherwise, we used simulated data perturbed by $1\%$ noise (independent and identically distributed Gaussian white noise), $2562$ roughly evenly distributed far field evaluation points on the $\S^2$, and a regularization parameter $\alpha_k=\alpha_0\cdot\rho^k$ in iteration $k\geq 0$ with $\rho = 0.8$ for the experiments in this section.
This amount of evaluation points was chosen to make use of the closed formula for $DF'$ from \cite[Theorem~3]{zbMATH05251341}, which requires knowledge of the entire far field.

To terminate the regularization, we use the discrepancy principle as pointed out in \cref{sec:IRGNM}.
To avoid running into local minima, the general procedure for the reconstructions was to start the reconstruction of the object with wavelength $\lambda = 2\, \uppi / \kappa \sim \mathrm{object\,size}$ and proceed to use the reconstruction as a warm start for reconstructions with higher wavenumbers (see, e.g., \cref{fig:Triceratops}).
For visualization purposes, we included scales that visualize the wavelength(s) $\lambda = 2 \, \uppi / \kappa$.


\begin{figure}[t]%
	\begin{center}%
	\newcommand{\inca}[2]{%
	\begin{tikzpicture}%
		\node[inner sep=0pt] (fig) at (0,0) {\includegraphics[	trim = 30 30 30 30, 
		clip = true,  
		angle = 0,
		width = 0.25\textwidth]{#1}};%
		\node[above right= 0ex] at (fig.south west) {\begin{footnotesize}(#2)\end{footnotesize}};%
	\end{tikzpicture}%
	}%
    \newcommand{\incb}[2]{%
	\begin{tikzpicture}%
		\node[inner sep=0pt] (fig) at (0,0) {\includegraphics[	trim = 0 20 0 15, 
		clip = true,  
		angle = 0,
		width = 0.50\textwidth]{#1}};%
		\node[above right= 0ex] at (fig.south west) {\begin{footnotesize}(#2)\end{footnotesize}};%
	\end{tikzpicture}%
	}%
	\inca{Figure_Triceratops/triceratops_BAEMM_16inc_TPM0_8pi_remeshed_new_tau_3_rho_0.8/True}{a}%
    \,
    \incb{Figure_Triceratops/triceratops_BAEMM_16inc_TPM0_pi2pi_remeshed/Errors}{b}%
    \\
	\inca{Figure_Triceratops/triceratops_BAEMM_16inc_TPM0_pi2pi_remeshed/Reconstructed}{c}%
    \inca{Figure_Triceratops/triceratops_BAEMM_16inc_TPM0_4pi6pi_remeshed/Reconstructed}{d}%
    \inca{Figure_Triceratops/triceratops_BAEMM_16inc_TPM0_8pi_remeshed_new_tau_3_rho_0.8/Reconstructed}{e}%
	\end{center}
	\caption{%
		(a)~The mesh ``Triceratops'' as an obstacle.
		(b)~The errors of the reconstruction during the optimization on the way from the initial guess (a round sphere) to (c). 
		The Hausdorff distance is measured relatively to the diameter of the true obstacle.
		(c)~Coarse reconstruction for deliberately large wavelengths.
		(d)~Reconstruction for intermediate wavelengths.
		(e)~Final reconstruction; starting from (d) 
		it took $13$ Gauss--Newton steps to get to (e).
	}%
	\label{fig:Triceratops}
\end{figure}

\subsection{Spherical examples}
We start this section with some examples of spherical but non-star shaped objects.
\cref{fig:Bunny} showed already the reconstruction of the \emph{Stanford bunny}.
\cref{fig:Triceratops} displays the reconstruction of the detailed triangulated model of \emph{Triceratops}.
The mesh sizes are displayed in \cref{tab:meshes}.
Further, \cref{tab:computation_times} shows the number of Gauss--Newton iterations needed, the overall computation time for these reconstructions, and the Hausdorff distance of the reconstructed surface to the original surface.
The main part of the computation time is the calculation of the update step.
We provide a rough estimate for the requisite number of matrix-matrix products for the reconstruction of the triceratops with wavenumbers $\braces{4\,\uppi,6\,\uppi}$.
The matrix-matrix products we are counting, which are calculated on the GPU, are of size $(81920\times 81920)$ times $(81920\times 32)$.
Taking some sample steps of the reconstruction one sees that the GMRES, to solve \cref{eq:GradientEquation}, takes between $\sim 20$ and $\sim 90$ iterations each of which needs to evaluate $DF$ and $DF'$ once.
Observing that each of the operator evaluations takes $\sim 10$ iterations itself, one obtains a rough estimate of $400$ to $1800$ matrix-matrix products needed to calculate the update direction.
Compared to this, the time spent by \emph{Repulsor} to compute the tangent-point energy and to apply the operator $\Riesz_f$ was negligible.

We proceed with a reconstruction of the mesh \emph{Spot} with a higher noise level of $10\%$ (see \cref{fig:Spot}) and compare the results of the usage of different combinations of wavelengths.

The examples in \cref{fig:Blub} show that choosing wavelengths with a high least common integer multiple can help to prevent some artifacts. In particular, this can be seen at the tail base and in the ``wake'' of the lateral fin in \cref{fig:Blub}~(b) and \cref{fig:Blub}~(c).
The overall better reconstruction in \cref{fig:Blub}~(d) can be explained by the lower wavelength.


\begin{figure}[t]
    \centering
    \newcommand{\inca}[2]{%
    \begin{tikzpicture}%
       \node[inner sep=0pt] (fig) at (0,0) {\includegraphics[	
        trim = 80 10 110 10, 
        clip = true,  
        angle = 0,
        width = 0.24\textwidth]{#1}};%
        \node[above right= -.5ex] at (fig.south west) {\begin{footnotesize}(#2)\end{footnotesize}};%
    \end{tikzpicture}%
    }%
    \begin{minipage}[c]{0.95\textwidth}%
        \inca{Figure_Spot/spot_BAEMM_16inc_TPM0_5pi7pi_remeshed_10perc_noise_tau_1_2/True_Colored}{a}%
        \hfill
        \inca{Figure_Spot/spot_BAEMM_16inc_TPM0_6pi_remeshed_10perc_noise_tau_1_2/DistanceField}{b}%
        \hfill
        \inca{Figure_Spot/spot_BAEMM_16inc_TPM0_5pi7pi_remeshed_10perc_noise_tau_1_2/DistanceField}{c}%
        \hfill 
        \inca{Figure_Spot/spot_BAEMM_16inc_TPM0_5pi7pi_remeshed_10perc_noise_tau_1_2/True_DistanceField}{d}%
    \end{minipage}%
    \hfill
    \begin{minipage}[c]{0.043\textwidth}%
        \includegraphics[
            trim = 0 0 0 0, 
            clip = true,  
            angle = 0,
            width = \textwidth
        ]{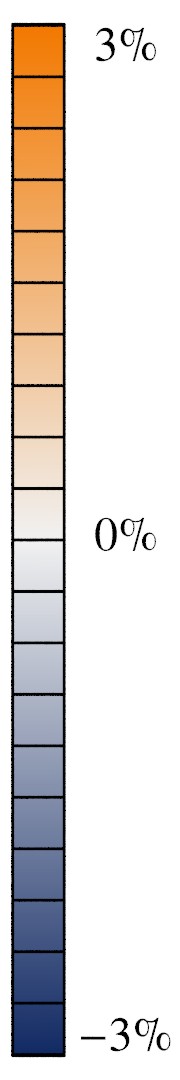}%
    \end{minipage}%
    \caption{%
        (a)~The mesh \emph{Spot} as the obstacle.
        In this example, the synthetic far field data was perturbed by $10\%$ white noise.
        (b), (c) The signed distance fields of the true obstacle, plotted over the reconstructions.
        (d) The signed distance fields of (c), plotted over the true obstacle.
    }%
    \label{fig:Spot}
\end{figure}


\begin{figure}[t]
    \centering
	\newcommand{\inca}[2]{%
		\begin{tikzpicture}%
			\node[inner sep=0pt] (fig) at (0,0) {\includegraphics[%
                trim = 30 10 20 20,%
                clip = true,%
                angle = 0,%
                width = 0.25\textwidth%
            ]{#1}};%
			\node[below right= 0.5ex] at (fig.north west) {\begin{footnotesize}#2\end{footnotesize}};%
		\end{tikzpicture}%
	}%
    \newcommand{\incb}[2]{%
        \begin{tikzpicture}%
            \node[inner sep=0pt] (fig) at (0,0) {\includegraphics[%
                trim = 30 40 20 90,%
                clip = true,%
                angle = 0,%
                width = 0.25\textwidth%
            ]{#1}};%
            \node[below right= 0.5ex] at (fig.north west) {\begin{footnotesize}#2\end{footnotesize}};%
        \end{tikzpicture}%
    }%
    \begin{minipage}[c]{\textwidth}%
        \begin{minipage}[c]{0.95\textwidth}%
            \inca{Figure_Blub/blub_BAEMM_16inc_TPM0_5pi_7pi_remeshed_tau_3_saved/Side_True_Colored}{(a)}%
            \hfill
            \inca{Figure_Blub/blub_BAEMM_16inc_TPM0_3pi5pi_remeshed_tau_3_saved/Side_DistanceField}{(b)}%
            \hfill
            \inca{Figure_Blub/blub_BAEMM_16inc_TPM0_4pi_6pi_remeshed_tau_3_saved/Side_DistanceField}{(c)}%
            \hfill
            \inca{Figure_Blub/blub_BAEMM_16inc_TPM0_5pi_7pi_remeshed_tau_3_saved/Side_DistanceField}{(d)}%
            \\
            \incb{Figure_Blub/blub_BAEMM_16inc_TPM0_5pi_7pi_remeshed_tau_3_saved/Top_True_Colored}{\phantom{(a)}}%
            \hfill
            \incb{Figure_Blub/blub_BAEMM_16inc_TPM0_3pi5pi_remeshed_tau_3_saved/Top_DistanceField}{\phantom{(b)}}%
            \hfill
            \incb{Figure_Blub/blub_BAEMM_16inc_TPM0_4pi_6pi_remeshed_tau_3_saved/Top_DistanceField}{\phantom{(c)}}%
            \hfill
            \incb{Figure_Blub/blub_BAEMM_16inc_TPM0_5pi_7pi_remeshed_tau_3_saved/Top_DistanceField}{\phantom{(d)}}%
        \end{minipage}%
        \hfill
        \begin{minipage}[c]{0.0485\textwidth}%
            \includegraphics[%
                trim = 0 0 0 0,%
                clip = true,%
                angle = 0,%
                width = \textwidth%
            ]{%
                Figure_Blub/blub_BAEMM_16inc_TPM0_5pi_7pi_remeshed_tau_3_saved/ColorBar
            }%
        \end{minipage}%
    \end{minipage}%
    \\
    \begin{minipage}[c]{\textwidth}
        \begin{minipage}[c]{0.95\textwidth}%
            \inca{Figure_Blub/blub_BAEMM_16inc_TPM0_5pi_7pi_remeshed_tau_3_saved/Side_True}{\phantom{(a)}}%
            \hfill
            \inca{Figure_Blub/blub_BAEMM_16inc_TPM0_3pi5pi_remeshed_tau_3_saved/Side_Reconstructed}{\phantom{(b)}}%
            \hfill
            \inca{Figure_Blub/blub_BAEMM_16inc_TPM0_4pi_6pi_remeshed_tau_3_saved/Side_Reconstructed}{\phantom{(c)}}%
            \hfill
            \inca{Figure_Blub/blub_BAEMM_16inc_TPM0_5pi_7pi_remeshed_tau_3_saved/Side_Reconstructed}{\phantom{(d)}}%
        \end{minipage}%
        \hfill 
        \phantom{.}
    \end{minipage}%
    \caption{%
    (a) The mesh \emph{Blub} as the obstacle.
    The columns (b), (c), (d) show the reconstructions with the signed distance field of the true obstacle for various combinations of wavelengths.
    Bottom row: 
    Dull surface renders highlight some artifacts associated to the surface normals, in particular, behind the lateral fins.
    }
    
    \label{fig:Blub}
\end{figure}

\subsection{Toroidal examples}


\begin{figure}[t]
	\centering
	\newcommand{\myheight}{0.23\textwidth}
	\newcommand{\inca}[2]{%
		\begin{tikzpicture}%
			\node[inner sep=0pt] (fig) at (0,0) {\includegraphics[	trim = 30 10 30 10, 
			clip = true,  
			angle = 0,
			height = \myheight]{#1}};%
			\node[above right= -0.5ex] at (fig.south west) {\begin{footnotesize}(#2)\end{footnotesize}};%
		\end{tikzpicture}%
	}%
	\inca{Figure_SnakeTorus/snake_torus_BAEMM_32inc_TPM0_pi_remeshed/Init_Compared}{a}%
	\hfill
	\inca{Figure_SnakeTorus/snake_torus_BAEMM_32inc_TPM0_pi_remeshed/DistanceField}{b}%
	\hfill
	\inca{Figure_SnakeTorus/snake_torus_BAEMM_32inc_TPM0_2pi_4pi_remeshed_tau_2/DistanceField}{c}%
	\hfill
	\inca{Figure_SnakeTorus/snake_torus_BAEMM_32inc_TPM0_2pi_4pi_remeshed_tau_2/True_DistanceField}{d}%
	\hfill
	\includegraphics[trim = 0 0 0 0, 
	clip = true,  
	angle = 0,
	height = \myheight]{%
		Figure_SnakeTorus/snake_torus_BAEMM_32inc_TPM0_2pi_4pi_remeshed_tau_2/ColorBar
	}%
	\caption{%
		(a)~The true obstacle, a deformed torus (orange), together with the initial guess.
		(b)~A~deliberately rough reconstruction for large wavelengths plotted with the signed distance field of the true obstacle.
		(c)~The final reconstruction for smaller wavelengths, obtained from (b) after $22$ Gauss--Newton steps. 
		(d) The true obstacle with the signed distance field of the reconstruction (c).
	}%
	\label{fig:SnakeTorus}
\end{figure}

Notice that the algorithm proposed in \cref{sec:IRGNM} does not depend on any assumptions on the topology of the reference manifold -- except for the choice of an initial guess.
Therefore, we also present its behavior on surfaces with non-trivial topology.
The example presented in \cref{fig:SnakeTorus} is a standard torus, which we deformed periodically in radial and longitudinal direction and remeshed it.
One further adjustment we made for this experiment is that in \cref{eq:farfield} and \cref{eq:integral} we set $\eta=1$ instead of $\eta=\kappa$ for the evaluation of the synthetic data.
This was done to exclude the occurrence of an inverse crime, namely a strong dependence of the solution on the setup of the forward solver.
A second example is given in \cref{fig:Bob}, where we present the reconstruction of a synthetically created toroidal rubber duck with a torus of revolution as the initial guess.


\begin{figure}%
    \centering
    \newcommand{\inca}[2]{%
    \begin{tikzpicture}%
       \node[inner sep=0pt] (fig) at (0,0) {\includegraphics[	
            trim = 60 220 20 10, 
            clip = true,  
            angle = 0,
            width = 0.24\textwidth
        ]{#1}};%
        \node[above right= -.5ex] at (fig.south west) {\begin{footnotesize}(#2)\end{footnotesize}};%
    \end{tikzpicture}%
    }%
    \begin{minipage}[c]{0.96\textwidth}%
        \inca{Figure_Bob/bob_BAEMM_16inc_TPM0_4pi_remeshed_tau_1_5/True_Colored}{a}%
        \hfill
        \inca{Figure_Bob/bob_BAEMM_16inc_TPM0_4pi_remeshed_10perc_noise_tau_1_2/DistanceField}{b}%
        \hfill
        \inca{Figure_Bob/bob_BAEMM_16inc_TPM0_4pi_remeshed_tau_1_5/DistanceField}{c}%
        \hfill 
        \inca{Figure_Bob/bob_BAEMM_16inc_TPM0_4pi_remeshed_tau_1_5/True_DistanceField}{d}%
    \end{minipage}%
    \hfill
    \begin{minipage}[c]{0.0385\textwidth}%
        \includegraphics[
            trim = 0 0 0 0, 
            clip = true,  
            angle = 0,
            width = \textwidth
        ]{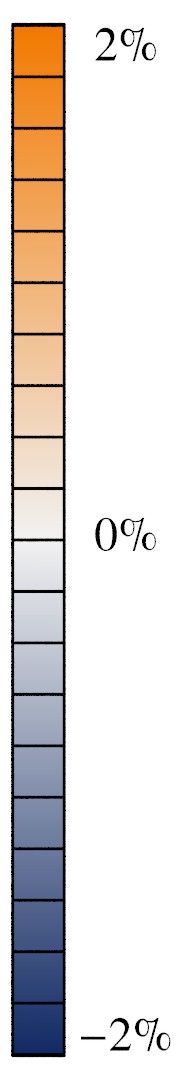}%
    \end{minipage}%
    \caption{%
    (a) The mesh \emph{Bob} as the obstacle.
    Our reconstructions with its signed distance field of the true obstacle: (b)~for $10\%$ noise white noise in the far field data and (c)~for $1\%$ noise.
    (d)
    The true obstacle with the signed distance field of the reconstruction~(c).
    }%
    \label{fig:Bob}
\end{figure}


\begin{figure}%
    \centering
    \newcommand{\myheight}{0.195\textwidth}
    \newcommand{\inca}[2]{%
    \begin{tikzpicture}%
       \node[inner sep=0pt] (fig) at (0,0) {\includegraphics[	
        trim = 60 220 20 10, 
        clip = true,  
        angle = 0,
        height = \myheight]{#1}};%
        \node[above right= -.5ex] at (fig.south west) {\begin{footnotesize}(#2)\end{footnotesize}};%
    \end{tikzpicture}%
    }%
    \newcommand{\incb}[2]{%
    \begin{tikzpicture}%
       \node[inner sep=0pt] (fig) at (0,0) {\includegraphics[	
        trim = 60 220 190 10, 
        clip = true,  
        angle = 0,
        height = \myheight]{#1}};%
        \node[above right= -.5ex] at (fig.south west) {\begin{footnotesize}(#2)\end{footnotesize}};%
    \end{tikzpicture}%
    }%
    \inca{Figure_Bob/bob_sphere_BAEMM_16inc_TPM0_3pi_4pi_remeshed_tau_2/DistanceField}{a}%
    \hfill 
    \inca{Figure_Bob/bob_sphere_BAEMM_16inc_TPM0_3pi_4pi_remeshed_tau_2/DistanceField_Sliced}{b}%
    \hfill
    \includegraphics[
        trim = 0 0 0 0, 
        clip = true,  
        angle = 0,
        height = \myheight
    ]{Figure_Bob/bob_BAEMM_16inc_TPM0_4pi_remeshed_tau_1_5/ColorBar}%
    \hfill
    \,\,\,\,
    \hfill
    \includegraphics[
        trim = 0 0 0 0, 
        clip = true,  
        angle = 0,
        height = \myheight
    ]{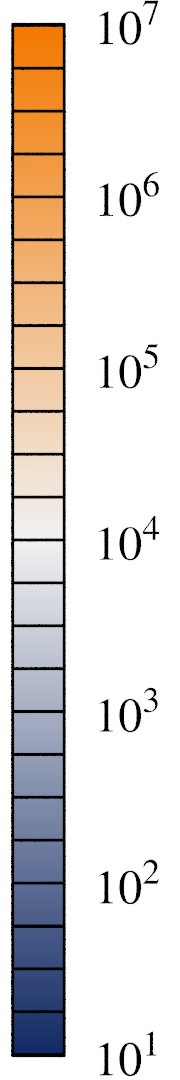}%
    \hfill 
    \inca{Figure_Bob/bob_sphere_BAEMM_16inc_TPM0_3pi_4pi_remeshed_tau_2/Reconstructed_TP_Density}{c}%
    \incb{Figure_Bob/bob_sphere_BAEMM_16inc_TPM0_3pi_4pi_remeshed_tau_2/Reconstructed_TP_Density_Sliced}{d}%
    \caption{%
    (a), (b)
    Our reconstruction of \emph{Bob} when using a sphere as initial guess.
    Even when we start from an incorrect topology, we obtain an interpretable result. 
    The thin double layer spanning the hole in the torus can easily be detected, e.g., by analyzing the \emph{tangent-point density} $\xi \mapsto \int_\varSigma r_\TP(\xi,\eta)^{-p} \dd \HsdM^{2}(\eta)$ as visualized in the logarithmic plots in (c) and (d).
    }%
    \label{fig:BobSphere}
\end{figure}

The tangent-point energy and our optimization pipeline do not allow for changing the isotopy type of the surface during optimization. 
So in principle, we have to know the topology a~priori.
The requisite guess could be performed, for instance, by using sampling methods as proposed in \cite{zbMATH02221668,sampling}. 
Nonetheless, it may still happen that a wrong initial guess for the topology is chosen.
An example for this is shown in \cref{fig:BobSphere}, where we started from a (topologically incorrect) sphere as initial guess.
Obviously, we cannot expect full convergence of the far field in this case. So the stopping rule by the discrepancy principle may never become active. 
Hence, we might have to stop prematurely.
Nonetheless, the reconstruction returns an interpretable result.

\subsection{Reduced data availability}

So far the directions of the incoming waves were distributed quite uniformly over the unit sphere. 
In practice, it might only be possible to use waves from a restricted set of directions.
In \cref{fig:BunnyQuadrant} we present an experiment with such reduced data input:
We chose $8$ incident wave directions condensed in the same quadrant to simulate a more realistic measuring situation.
Moreover, we chose a relatively high noise-level of $10\%$ and evaluated the far field at $162$ points on~$\S^2$.

To take this even further, in the example shown in \cref{fig:BunnyQuadrant_2} we increased the noise-level to $40\%$ and reduced the number of evaluation points to $64$.
This time, we use the combination of wavenumbers $\kappa = \braces{ 4\,\uppi,6\,\uppi}$.
Despite the highly corrupted data, our algorithm still yields a reasonable reconstruction, therefore demonstrating its robustness.


\begin{figure}[!ht]%
    \centering%
    \begin{tikzpicture}%
        \node[inner sep=0pt] (fig) at (0,0) {\includegraphics[	
            trim = 0 0 0 0, 
            clip = true,  
            angle = 0,
            width = 0.24\textwidth
        ]{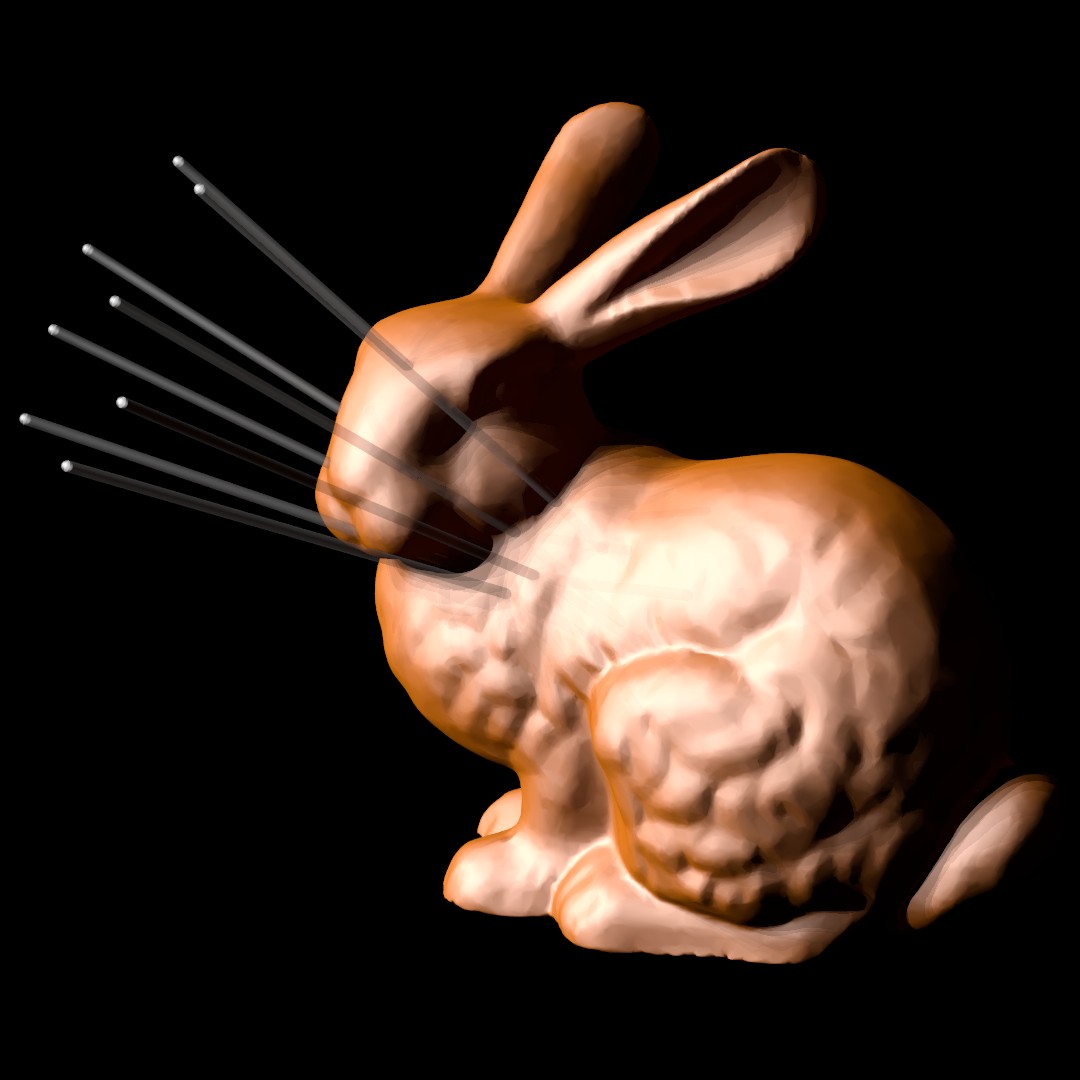}};%
        \node[above right = 0ex] at (fig.south west) {\begin{footnotesize}{\color{White}(a)}\end{footnotesize}};%
    \end{tikzpicture}%
    \begin{tikzpicture}%
        \node[inner sep=0pt] (fig) at (0,0) {\includegraphics[	
            trim = 0 0 0 0, 
            clip = true,  
            angle = 0,
            width = 0.24\textwidth
        ]{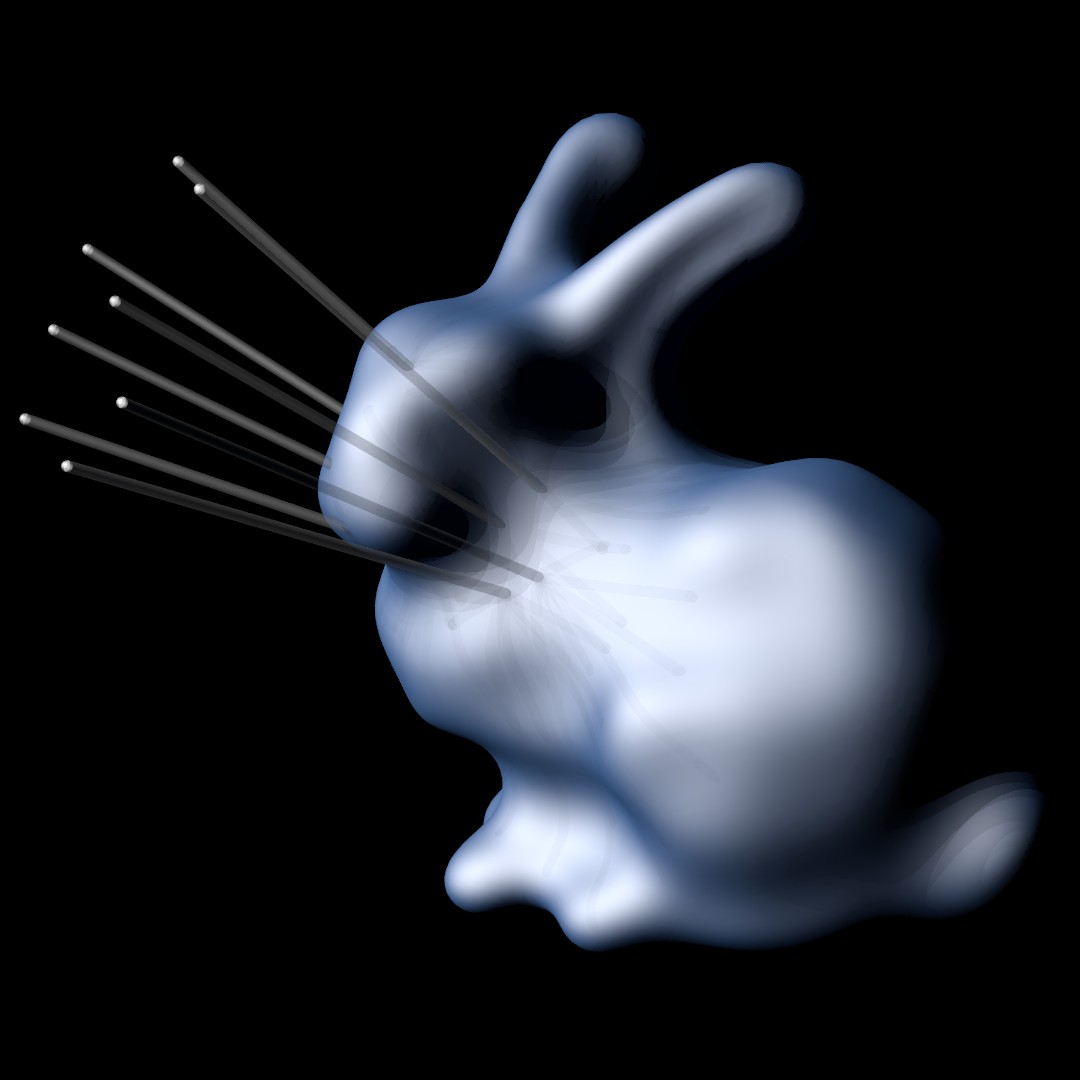}};%
        \node[above right = 0ex] at (fig.south west) {\begin{footnotesize}{\color{White}(b)}\end{footnotesize}};%
    \end{tikzpicture}%
     \begin{tikzpicture}%
        \node[inner sep=0pt] (fig) at (0,0) {\includegraphics[	
            trim = 60 40 90 120, 
            clip = true,  
            angle = 0,
            width = 0.24\textwidth
        ]{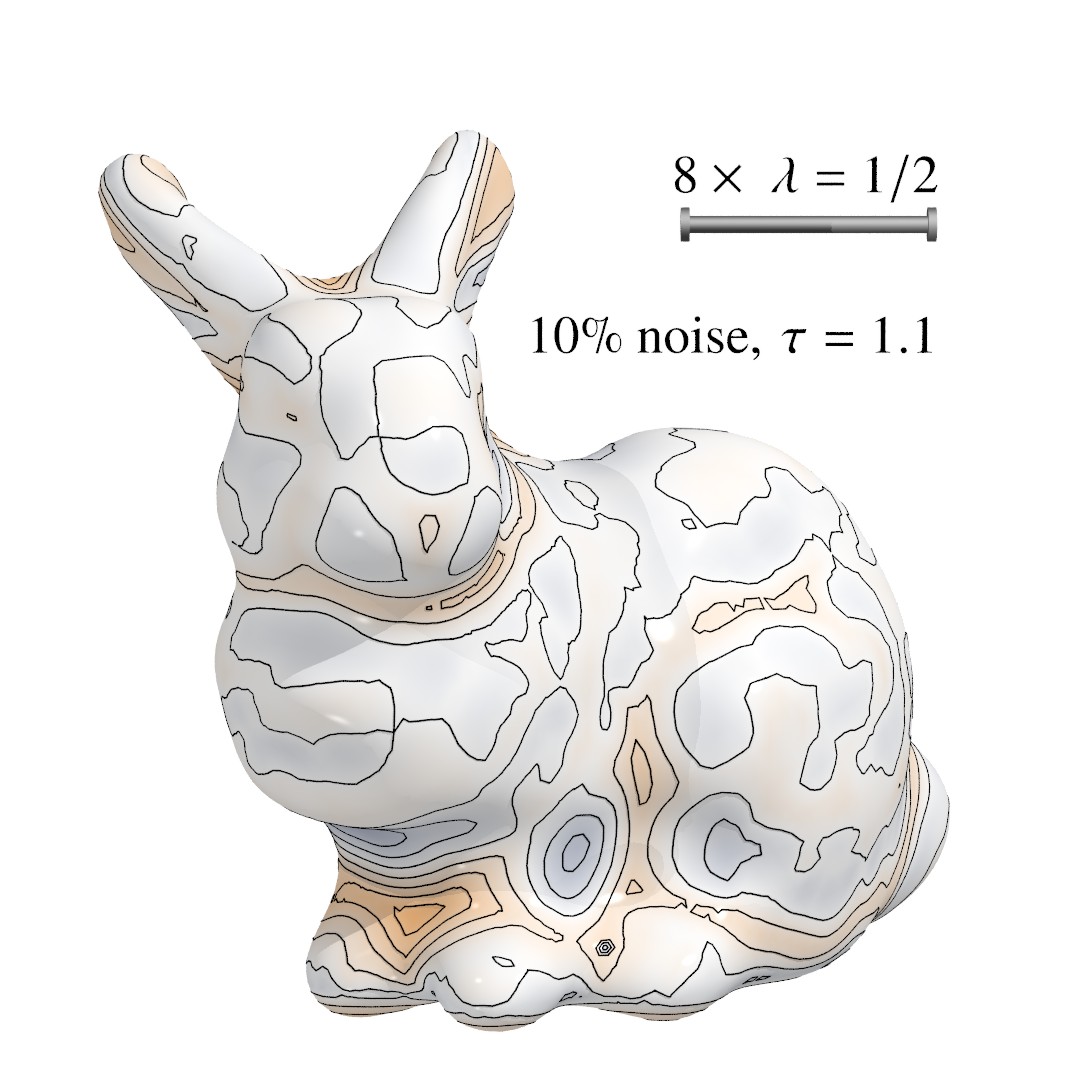}};%
        \node[above right= 0ex] at (fig.south west) {\begin{footnotesize}(c)\end{footnotesize}};%
    \end{tikzpicture}%
    \begin{tikzpicture}%
        \node[inner sep=0pt] (fig) at (0,0) {\includegraphics[	
            trim = 90 40 60 120, 
            clip = true,  
            angle = 0,
            width = 0.24\textwidth
        ]{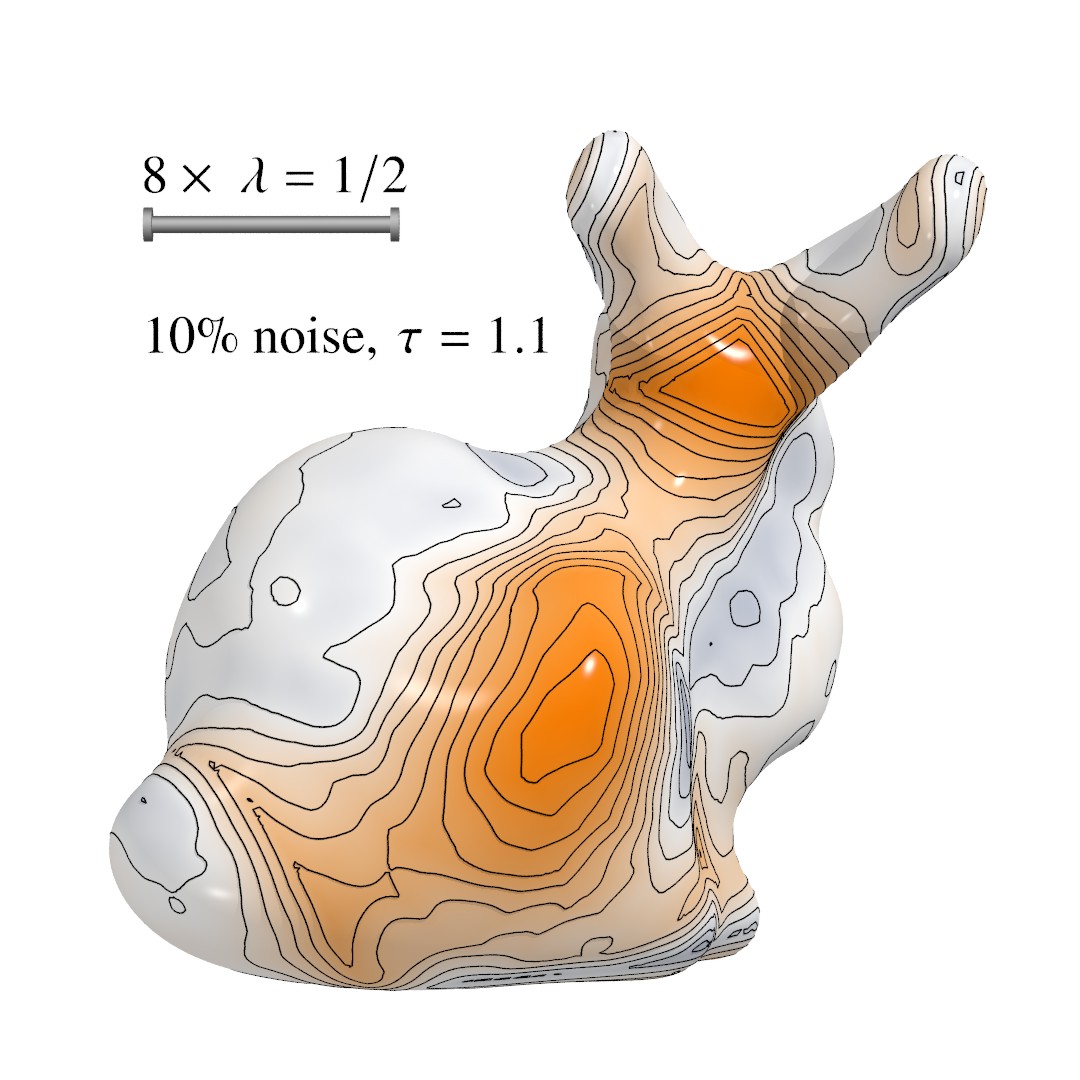}};%
        \node[above left = 0ex] at (fig.south east) {\begin{footnotesize}(d)\end{footnotesize}};%
    \end{tikzpicture}%
    \includegraphics[	
        trim = 0 0 0 0, 
        clip = true,  
        angle = 0,
        width = 0.040\textwidth
    ]{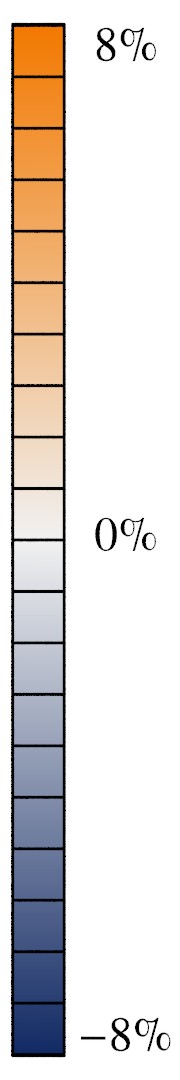}%
    \caption{%
        (a) The Standford bunny from \cref{fig:Bunny} as the obstacle for a focused bundle of $8$ incident wave directions.
        (b) Our reconstruction for far field-data evaluated at $162$ points and perturbed by $10\%$ white noise.
        (c), (d) The signed distance field of the true obstacle, plotted over the reconstructed surface.
    }%
    \label{fig:BunnyQuadrant}%
\end{figure}


\begin{figure}[!ht]%
    \centering%
    \begin{tikzpicture}%
        \node[inner sep=0pt] (fig) at (0,0) {\includegraphics[	
            trim = 60 0 40 0, 
            clip = true,  
            angle = 0,
            width = 0.228\textwidth
        ]{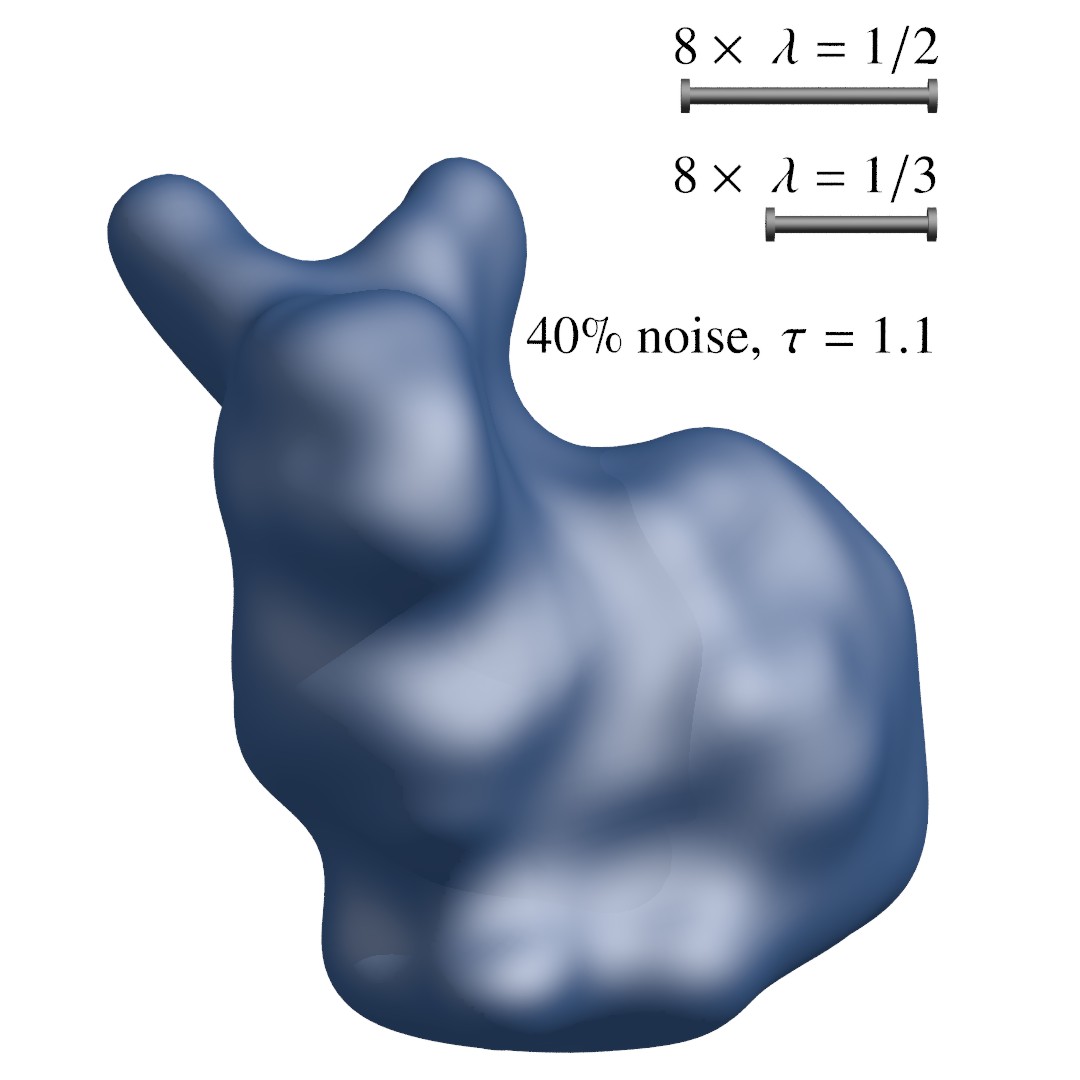}};%
        \node[above right = 0ex] at (fig.south west) {\begin{footnotesize}{(a)}\end{footnotesize}};%
    \end{tikzpicture}%
    \begin{tikzpicture}%
        \node[inner sep=0pt] (fig) at (0,0) {\includegraphics[	
            trim = 60 0 40 0, 
            clip = true,  
            angle = 0,
            width = 0.228\textwidth
        ]{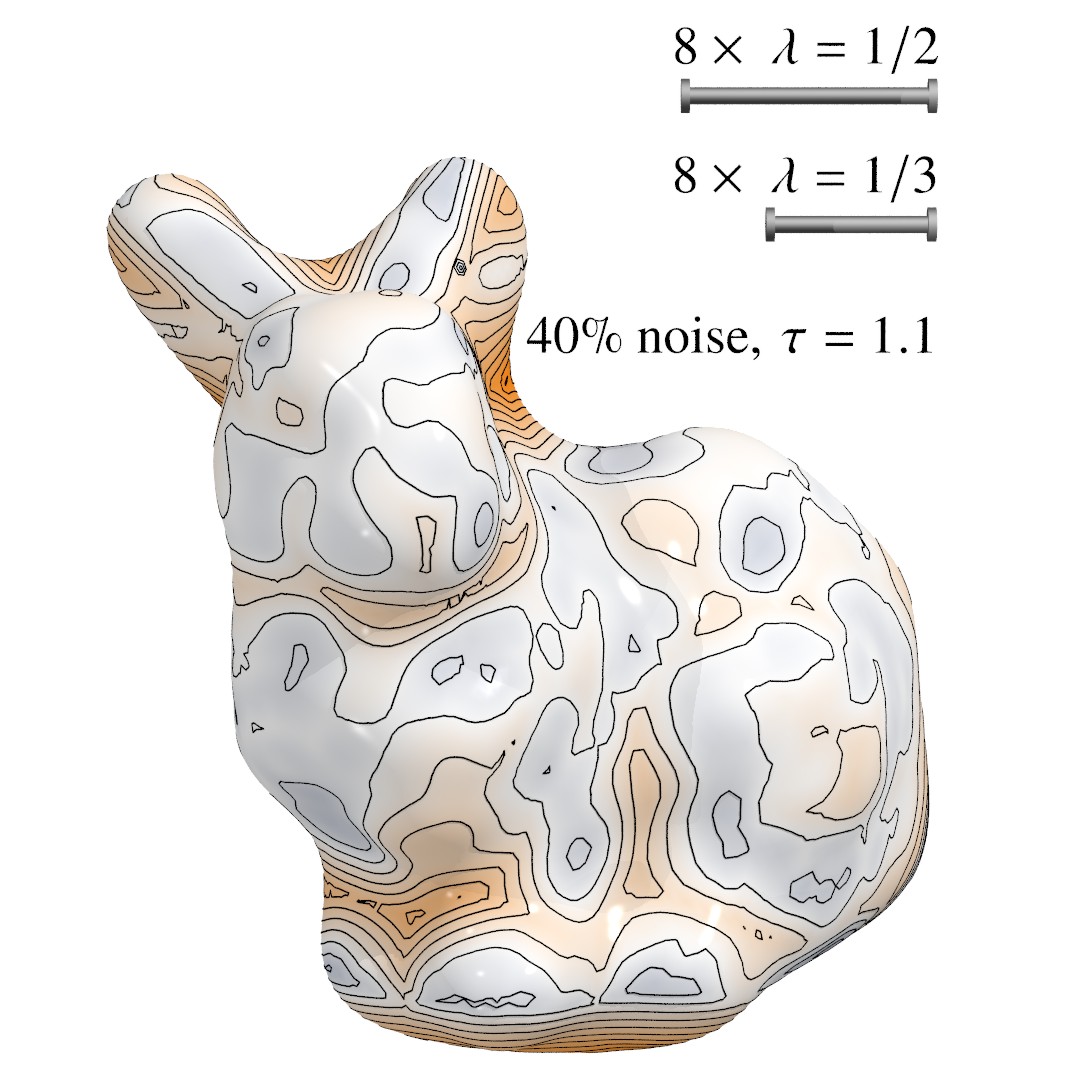}};%
        \node[above right = 0ex] at (fig.south west) {\begin{footnotesize}{(b)}\end{footnotesize}};%
    \end{tikzpicture}%
     \begin{tikzpicture}%
        \node[inner sep=0pt] (fig) at (0,0) {\includegraphics[	
            trim = 40 0 60 0, 
            clip = true,  
            angle = 0,
            width = 0.228\textwidth
        ]{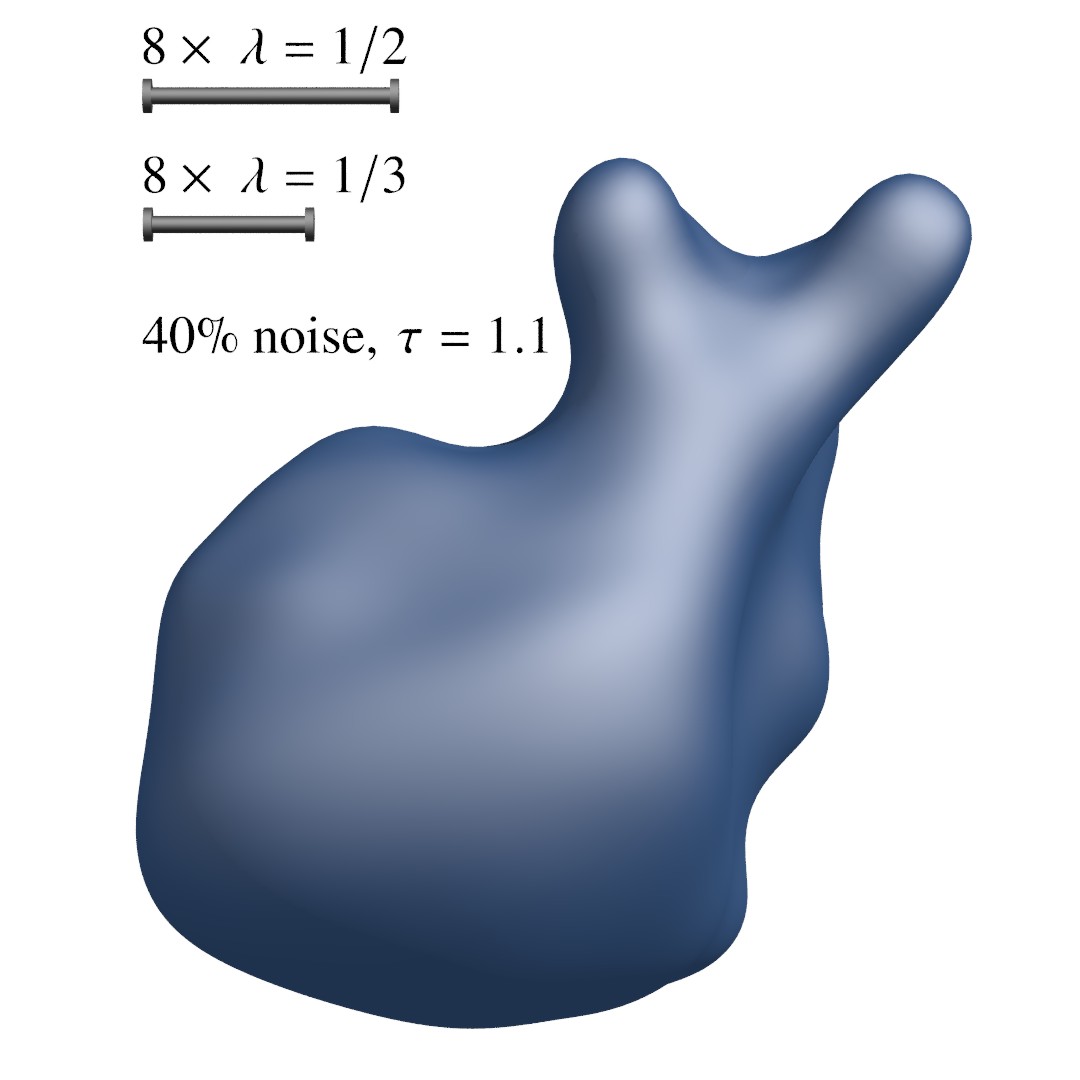}};%
        \node[above right= 0ex] at (fig.south east) {\begin{footnotesize}(c)\end{footnotesize}};%
    \end{tikzpicture}%
    \begin{tikzpicture}%
        \node[inner sep=0pt] (fig) at (0,0) {\includegraphics[	
            trim = 40 0 60 0, 
            clip = true,  
            angle = 0,
            width = 0.228\textwidth
        ]{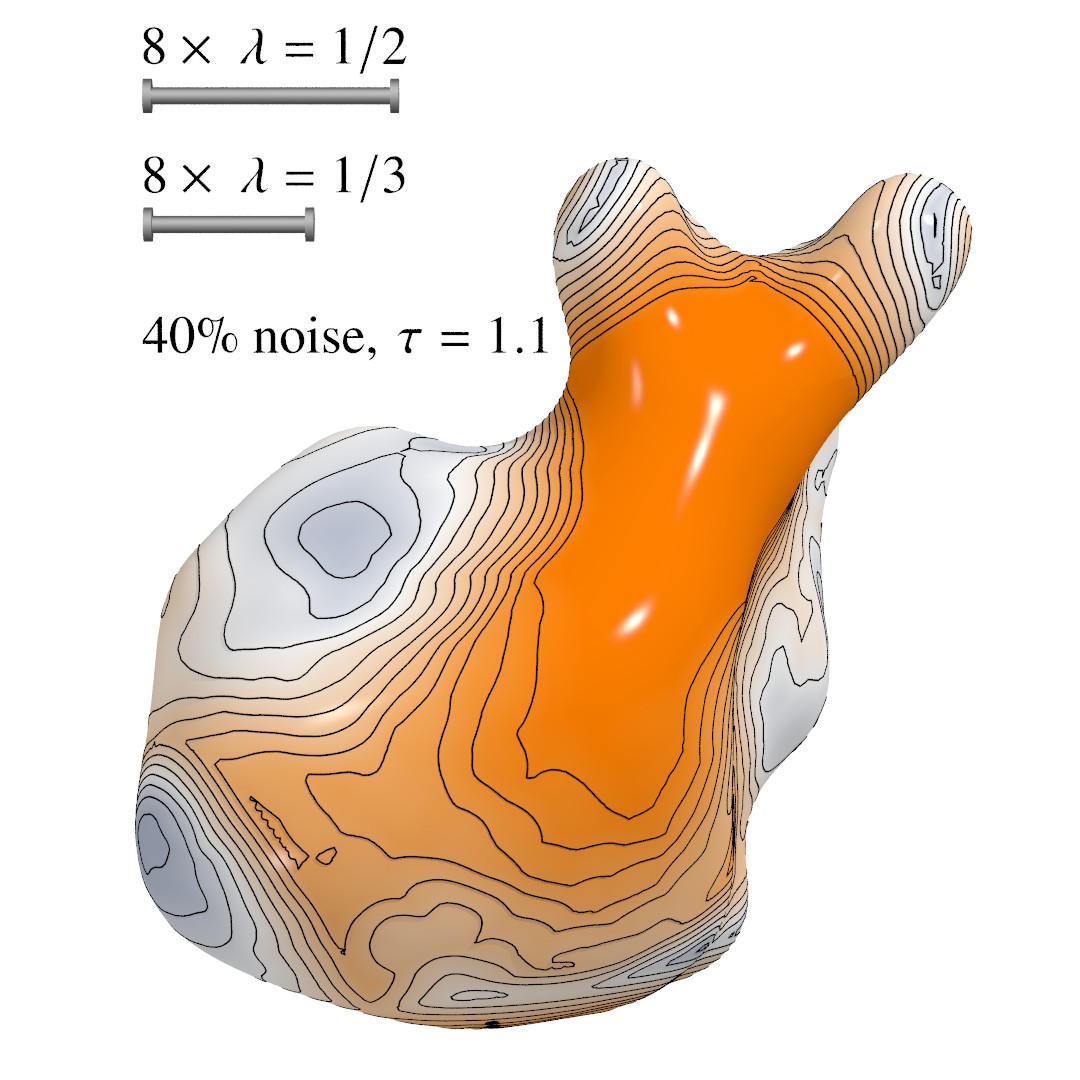}};%
        \node[above left = 0ex] at (fig.south east) {\begin{footnotesize}(d)\end{footnotesize}};%
    \end{tikzpicture}%
    \includegraphics[	
        trim = 0 0 0 0, 
        clip = true,  
        angle = 0,
        width = 0.040\textwidth
    ]{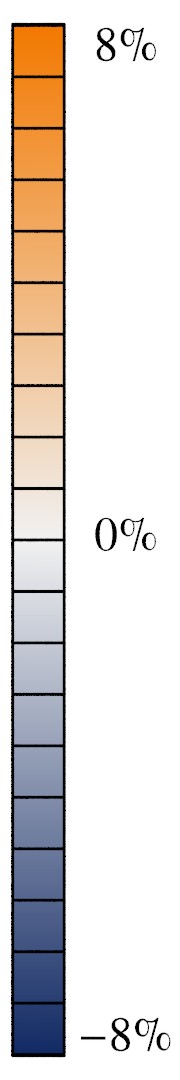}%
    \caption{%
    Consider the incoming directions from \cref{fig:BunnyQuadrant}.
        (a) The reconstruction with far field-data evaluated on $64$ points perturbed by $40\%$ white noise shown from the front.
        (b) The reconstruction from (a) with the signed distance field of the true obstacle, plotted over the reconstructed surface.
        (c), (d) The reconstruction from (a) and (b) shown from behind.
        Remarkably, despite the relatively high noise level and the incoming waves being focussed on the front, 
        our regularizer fills in the blind spot on the back in a plausible manner.
    }%
    \label{fig:BunnyQuadrant_2}%
\end{figure}


\section*{Acknowledgments}

Jannik Rönsch has been supported by the Research Training Group 2088 \emph{Discovering structure in complex data: Statistics meets Optimization and Inverse Problems}, funded by the Deutsche Forschungsgemeinschaft (DFG, German Research Foundation).
Henrik Schumacher has been partially supported by the Research Training Group 2326 \emph{Energy, Entropy, and Dissipative Dynamics}, funded by the Deutsche Forschungsgemeinschaft (DFG, German Research Foundation).

We would also like to thank Keenan Crane for putting the meshes \emph{Spot}, \emph{Bob}, and \emph{Blub} into the Public Domain and for making them available in \cite{Crane_Meshes}.
Moreover, we would like to express our gratitude towards Philipp Reiter who granted us access to the computational resources, in particular to the NVIDIA~A40 GPUs of the Faculty of Mathematics at Chemnitz University of Technology.

\bibliographystyle{abbrvhref}

{\footnotesize
\setlength{\parskip}{0.001ex}
\bibliography{references}
}

\interlinepenalty=10000 

\appendix 
\crefalias{section}{appendix}

\appendix
\section{A product rule}\label{sec:ProductRule}

Product rules for \SoboSlobo spaces are well studied as special cases of product rules for Triebel--Lizorkin spaces and Besov spaces (see, e.g., \cite{zbMATH03594093}, \cite[Chapters 4 and 5]{zbMATH00929821}, or \cite{zbMATH07447116}).
Alas, we did not find the case required in the proof of \cref{lem:UpdateDirectionIsLipschitzContinuous}. 
Because it involves a Hölder space $\Holder[0,\sigma][][\Domain][\R] = \Sobo[\sigma,\infty][][\Domain][\R]$, it escapes some typical approaches from interpolation theory or for functions on the full space $\DomSpace$. We give a very brief and elementary proof.

\begin{lemma}\label{lem:ProductRuleWspWtinfty}
    Let $\alpha, \tau \in \intervaloo{0,1}$, $\sigma \in \intervaloo{\tau,1}$, $p \in \intervalcc{1,\infty}$, let 
    $\Domain$ be a closed, $n$-dimensional, smooth manifold,
    and let $f \in \Holder[1,\alpha][][\Domain][\AmbSpace]$ be an embedding.
    Then there is a $C_f >0$ such that 
    \[
        \norm{u \cdot v}_{\Sobo[\tau,p](f)}
        \leq 
        C_f \norm{u}_{\Sobo[\sigma,\infty](f)} \, \norm{v}_{\Sobo[\tau,p](f)}
        \quad 
        \text{for all $u \in \Sobo[\sigma,\infty][][\Domain][\R]$ and $v \in \Sobo[\tau,p][][\Domain][\R]$}
        .
    \]
\end{lemma}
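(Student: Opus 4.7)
The plan is to mimic the classical product rule for fractional Sobolev norms by exploiting the Leibniz-type splitting
\[
    (u\,v)(y) - (u\,v)(x) = u(y)\pars{v(y)-v(x)} + v(x)\pars{u(y)-u(x)},
\]
which immediately yields the pointwise bound
\[
    \abs{\DeltaOp[\tau][f](u\,v)(x,y)}
    \leq
    \abs{u(y)}\,\abs{\DeltaOp[\tau][f] v(x,y)}
    +
    \abs{v(x)}\,\abs{\DeltaOp[\tau][f] u(x,y)}.
\]
By the triangle inequality in $\Lebesgue[p][\Measure[f]]$ it will then suffice to control each of the two summands separately. Throughout, the bound $\norm{u \cdot v}_{\Lebesgue[p](f)} \leq \norm{u}_{\Lebesgue[\infty]} \norm{v}_{\Lebesgue[p](f)} \leq \norm{u}_{\Sobo[\sigma,\infty](f)} \norm{v}_{\Sobo[\tau,p](f)}$ handles the zeroth-order part of the norm $\norm{\cdot}_{\Sobo[\tau,p](f)}$.

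For the first summand, I would simply pull the sup-norm of $u$ out of the integral; since $\norm{u}_{\Lebesgue[\infty]} \leq \norm{u}_{\Sobo[\sigma,\infty](f)}$ by definition of the Hölder norm, this yields the desired contribution $\norm{u}_{\Sobo[\sigma,\infty](f)}\, \seminorm{v}_{\Sobo[\tau,p](f)}$ directly.

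For the second summand the hypothesis $\sigma > \tau$ enters crucially: using the definition of the Hölder seminorm $\seminorm{u}_{\Sobo[\sigma,\infty](f)}$ I would estimate
\[
    \abs{\DeltaOp[\tau][f] u(x,y)}
    =
    \frac{\abs{u(y)-u(x)}}{\abs{f(y)-f(x)}^\tau}
    \leq
    \seminorm{u}_{\Sobo[\sigma,\infty](f)}\,\abs{f(y)-f(x)}^{\sigma-\tau}.
\]
Inserting this, the second summand of $\norm{\DeltaOp[\tau][f](u\,v)}_{\Lebesgue[p](\Measure[f])}^p$ is bounded from above by
\[
    \seminorm{u}_{\Sobo[\sigma,\infty](f)}^p
    \int_\Domain \abs{v(x)}^p
    \pars[\bigg]{
        \int_\Domain \abs{f(y)-f(x)}^{p(\sigma-\tau)-\DomDim} \dvol_f(y)
    } \dvol_f(x).
\]
The inner integral is uniformly bounded in $x$: this is the step I expect to be the main technical point. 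Since $f \in \Holder[1,\alpha]$ is an embedding of the compact manifold $\Domain$, the map $f$ is bi-Lipschitz, so the pushforward of $\vol_f$ under $f$ has polynomial volume growth $\lesssim r^\DomDim$ on small $f$-balls around $f(x)$. A polar-coordinate argument in a graph patch then shows integrability of $r^{p(\sigma-\tau)-\DomDim}$ near $y=x$ precisely because $p(\sigma-\tau) > 0$, while the tail $\abs{f(y)-f(x)} \geq c$ is controlled by the diameter of $f(\Domain)$.

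Once this uniform bound on the inner integral is established, the outer integral yields $\norm{v}_{\Lebesgue[p](f)}^p \leq \norm{v}_{\Sobo[\tau,p](f)}^p$, and combining everything produces the claimed inequality. The case $p = \infty$ is handled by the same splitting, where both the $\Lebesgue[p](\Measure[f])$-norm and the volume integral are replaced by $\Lebesgue[\infty]$ bounds and the factor $\abs{f(y)-f(x)}^{\sigma-\tau}$ is simply controlled by $\diam(f(\Domain))^{\sigma-\tau}$.
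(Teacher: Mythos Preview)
Your proposal is correct and follows essentially the same route as the paper's proof: the same Leibniz splitting, the same handling of the two summands (sup-norm pull-out for the first, the $\sigma>\tau$ Hölder gain producing the kernel $\abs{f(y)-f(x)}^{p(\sigma-\tau)-\DomDim}$ for the second), and the same uniform bound on the resulting inner integral. The paper merely states the integrability of that kernel by pointing to the exponent condition $p(\sigma-\tau)-\DomDim > -\DomDim$ and the compactness of $\Domain$, whereas you spell out the graph-patch/polar-coordinate reasoning; the arguments are otherwise identical.
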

\begin{proof}
    By Hölder's inequality we have 
    $
        \norm{u \cdot v}_{\Lebesgue[p](f)}
        \leq 
        \norm{u}_{\Lebesgue[\infty](f)} \norm{u}_{\Lebesgue[p](f)}
        .
    $
    Using Hölder's inequality once more, we obtain
    \begin{align*}
        \seminorm{u \cdot v}_{\Sobo[s,p](f)}
        &=
        \norm*{
            (x,y) \mapsto 
            u(y) \, \sdfrac{v(y)- v(x)}{\abs{f(y)-f(x)}^\tau}
            +
            \sdfrac{u(y)- u(x)}{\abs{f(y)-f(x)}^\sigma}
            \, 
            \sdfrac{v(x)}{\abs{f(y)-f(x)}^{\tau-\sigma}}
        }_{\Lebesgue[p](\mu_f)}
        \\
        &\leq 
        \norm{u}_{\Lebesgue[\infty]}
        \seminorm{v}_{\Sobo[\tau,p](f)}
        +
        \seminorm{u}_{\Sobo[\sigma,\infty](f)}
        \, 
        I(v)
        ,
    \end{align*}
    where 
    \[
        I(v) \ceq \norm[\big]{
            (x,y) \mapsto  v(x) \abs{f(y)-f(x)}^{\sigma-\tau}
        }_{\Lebesgue[p](\mu_f)} 
        .
    \]
    If $p = \infty$, then we may exploit that $\sigma-\tau > 0$ and that $\Domain$ is bounded
    to obtain
    $
    I(v) \leq 
        \diam(f(\Domain))^{\sigma-\tau}
        \norm{v}_{\Lebesgue[p](f)}
    $.
    If $p < \infty$, then we have 
    \[
        I(v)
        =
        \pars[\Big]{
            \textstyle
            \int_{\Domain} 
                \abs{v(x)}
                \,
                \varPhi(x)
            \dvol_f(x)
        }^{1/p}
        \quad 
        \text{with}
        \quad
        \varPhi(x)
        \ceq 
        \pars[\Big]{
            \textstyle
            \int_{\Domain}
                \abs{f(y)-f(x)}^{p (\sigma-\tau) - \DomDim}
            \dvol_f(y)
        }
        .
    \]
    Finally, we have $\norm{\varPhi}_{\Lebesgue[\infty]} < \infty$  because $\Domain$ is bounded, because $\dim(\Domain) = \DomDim$, and because $p (\sigma-\tau) - \DomDim > - \DomDim$.
    (See also \cref{lem:RfsIsSmoothLowOrder}, where we used a similar argument.)
    Hence, we get
    $
        I(v) \leq \norm{\varPhi}_{\Lebesgue[\infty](f)} \norm{v}_{\Lebesgue[p](f)},
    $
    which completes the proof.
\end{proof}

\end{document}